\newtheorem{theorem}{Theorem}
\newtheorem{proposition}{Proposition}
\newtheorem{lemma}{Lemma}
\newtheorem{remark}{Remark}
\journal{Statistics}
\begin{document}

\begin{frontmatter}



\title{{\bfseries Functional Analysis of Variance for Hilbert-Valued
Multivariate Fixed  Effect Models}\\
The final version of this draft will be published in Statistics.
}


\author{M.D. Ruiz-Medina}

\address{Department of Statistics and O.R.\\ Faculty of Sciences\\
Campus Fuente Nueva s/n\\University of Granada\\18071, Granada\\
e-mail: mruiz@ugr.es (corresponding author)}

\begin{abstract}
This paper presents new results on Functional Analysis of Variance
for  fixed effect models with correlated Hilbert-valued  Gaussian error components. The geometry of the Reproducing Kernel Hilbert Space
(RKHS) of the error term is considered in the computation of the total sum of squares,
the residual sum of squares, and the sum of squares due to the
regression.  Under suitable linear transformation of the correlated
functional data, the distributional characteristics of these
statistics, their moment generating and characteristic functions,  are derived. Fixed effect linear hypothesis testing is
finally formulated in the Hilbert-valued multivariate Gaussian
context considered.
\end{abstract}

\begin{keyword}
Fixed effect model \sep Gaussian measure on a separable Hilbert
space \sep Hilbert-valued  Gaussian  random vector \sep Linear
hypothesis testing \sep Reproducing Kernel Hilbert Space

\bigskip

 \MSC 60G15;  60G20;  62J10;
62H10;  62H15

\end{keyword}

\end{frontmatter}

\section{Introduction}

An extensive literature  on functional data analysis
techniques emerges
 in the last few decades. In
particular, in the functional linear regression context, one can
refer to the papers
   \cite{CaiHall06}; \cite{CardotFerraty03}; \cite{CardotSarda11};  \cite{Chiou04}; \cite{Crambes09}; \cite{Cuevas02}; \cite{Kokoszkaetal08}, among others.
  We particularly refer to the flexible approach recently presented in \cite{Ferraty13} to approximate the regression
function in the case of a functional predictor and
a scalar response, based on the Projection Pursuit Regression
principle. Specifically, an additive decomposition, which exploits
the most interesting projections of the prediction variable to
explain the response, is derived. This approach can be used as an
exploratory tool for the analysis of functional dataset, and  the
dimensionality problem is overcome.
   In the functional nonparametric regression
   framework we refer to    \cite{Ferraty06} and \cite{Ferraty11}, among others.
 Asymptotic
results, in particular, uniform consistency,  in the  purely
nonparametric context are derived in \cite{Kudraszowa13}  for a kNN
generalized regression estimator.
   For more
   details we refer to the  reader to the  nice summary on the statistics theory with  functional data in \cite{Cuevas14}, and  the references therein. New branches of the functional statistical
    theory in a univariate and multivariate framework  are collected in  \cite{Bongiorno14}.

 Functional
Analysis of Variance (FANOVA) extends  the classical ANOVA methods,
allowing the analysis of high-dimensional data  with a  functional
background. Due to the vast existing  literature on functional data
statistical analysis techniques, FANOVA models have recently gained
popularity and related literature has been steadily growing. For
comprehensive reviews we refer, for example, to   \cite{Ramsay} and
\cite{Stone}. In particular, FANOVA model fitting
 and  its component estimation have been addressed in several papers  (see
  \cite{Angelini};
\cite{Gu}; \cite{Huang}; \cite{Kaufman}; \cite{Kaziska}; \cite{Lin};
\cite{Wahba}, among others).

In the context of  hypothesis testing from functional data,
\cite{Faraway} discussed the difficulties of generalizing the ideas
of multivariate testing procedures to the functional data analysis
context.   A powerful overall test for functional hypothesis
testing, based on the decomposition of the original functional data
into Fourier and wavelet series expansions, is proposed in
\cite{Fan2}. In this paper,  the adaptive Neyman and wavelet
thresholding procedures of \cite{Fan1} are respectively  applied to
the resulting empirical Fourier and wavelet coefficients. The
general philosophy of the presented methodology exploits the
sparsity of the signal representation in the Fourier and wavelet
domains,  allowing a significant dimension reduction. Somewhat
similar approaches were considered in \cite{Dette} and
\cite{Eubank}. In \cite{Guo}
 a maximum likelihood ratio based test is suggested for functional
variance components in mixed-effect FANOVA models. The procedures
presented in \cite{Fan2} are applied to the mixed-effect FANOVA
model in \cite{Spitzner}. An alternative asymptotic
approach, inspired in classical ANOVA tests, is derived in
\cite{Cuevas04}, for studying the equality of the functional means
from k independent samples of functional data. In \cite{Abramovich}
and \cite{Abramovich2} the testing problem in the mixed-effect
functional analysis of variance models is addressed, developing
asymptotically optimal (minimax) testing procedures for the
significance of functional global trend, and the functional fixed
effects, from  the empirical wavelet coefficients of the data (see
also \cite{Antoniadis07}). Statistical shape analysis methods are
applied in \cite{Munk} to developing a neighborhood hypothesis
testing procedure to establish that a mean is in a specified $\delta
$-neighborhood.
 Recently, in the context of functional data defined by curves, considering the
$L^{2}$-norm,  an up-to-date overview of hypothesis testing methods
for functional data analysis is provided in \cite{Zhang}, including
 functional ANOVA,
functional linear models with functional responses, heteroscedastic
ANOVA for functional data, and hypothesis tests for the equality of
covariance functions, among other related topics.

Most of  the above-cited papers are based on
dimension reduction techniques, applying numerical projection and
smoothing methods. Classical ANOVA results are then considered for
projections in an univariate and multivariate framework. For
example, smoothing splines ANOVA (SS-ANOVA) has the restriction of
considering the time as an additional factor which implies
independence in time. This restriction was removed in
\cite{BrumbackRice98} and \cite{Gu}, using the RKHS theory.  This
theory is also applied in the present paper to remove independence
assumption on the functional zero-mean Gaussian error  vector  term. Since the
aim of this paper is to provide explicit results within the
infinite-dimensional probability distribution setting, we restrict
our attention to the Gaussian case. It is well-known that Gaussian
measures on Hilbert spaces can be identified with infinite products
of independence real-valued Gaussian measures (see, for example,
\cite{Da Prato}). Hence, one can work with infinite series of
independent real-valued random variables. Indeed, such an
identification, jointly with the acute formulation  of Cram\'er-Wold
theorem derived in \cite{Cuesta-Albertos07}, can be applied to
implement the multiway ANOVA methodology for functional data
proposed in \cite{Cuesta-Albertos10}, which does not require the
Gaussian assumption although the hypothesis of independence is maintained (see \cite{Da Prato}). Here, we also apply the
results in \cite{Da Prato}, on characteristic functions of quadratic
functionals constructed from Gaussian measures on Hilbert spaces,
for the derivation of the probability distribution  of the
functional components of variance, and of the  test statistics
formulated for linear hypothesis testing.

In all above-cited papers,
a Hilbert-valued formulation of the traditional analysis of
variance has tended to be missing, since the Functional Analysis of
Variance derived in \cite{Zoglat}. Specifically, in \cite{Zoglat},
the following $L_{2}([0,1])$-valued fixed effect model is
considered:
 \begin{equation}\mathbf{Y}(t) = \mathbf{X}\boldsymbol{\beta }(t) + \sigma \boldsymbol{\varepsilon} (t),\quad t\in [0,1],\label{fem}\end{equation} \noindent
where $\mathbf{X}=(x_{ij})$ is a $n\times p$ fixed effect design matrix. The response  $\mathbf{Y}$ is a $n$-dimensional
vector of independent  Gaussian $L_{2}([0,1])$-valued components with $E[\mathbf{Y}]=\mathbf{X}\boldsymbol{\beta },$ and with $L_{2}([0,1])$ denoting  the
space of square integrable functions on $[0,1].$
The unknown functional parameter $\boldsymbol{\beta }(t)$ takes its
values in the space $L_{2}^{p}([0,1])$ of vectorial functions with square integrable components on the interval $[0,1].$
The error term
 $\boldsymbol{\varepsilon}$ is an   $n$-dimensional
$L_{2}([0,1])$-valued zero-mean Gaussian random variable with
 covariance matrix operator
\begin{eqnarray}& &E\left[[\varepsilon_{1}(\cdot),\dots,\varepsilon_{n}(\cdot)]^{T}
[\varepsilon_{1}(\cdot),\dots,\varepsilon_{n}(\cdot)]\right]=\mbox{diag}(R)
\nonumber\\
& &=\left[\begin{array}{ccc}E[\varepsilon_{1}\otimes \varepsilon_{1}]&,\dots,&
E[\varepsilon_{1}\otimes \varepsilon_{n}]\\
\vdots & \vdots & \vdots \\
E[\varepsilon_{n}\otimes \varepsilon_{1}]&,\dots,&
E[\varepsilon_{n}\otimes \varepsilon_{n}]
\end{array}\right],\label{covest1}
\end{eqnarray}
\noindent where  $[\cdot ]^{T}$ denotes
transposition, $\mbox{diag}(R)$ is a diagonal matrix operator with
non-null functional  entries, in the diagonal, given by the compact
and self-adjoint operator $R,$ defined on $L_{2}([0,1]),$ i.e.,
$E[\varepsilon_{i}\otimes \varepsilon_{j}]=\delta_{i,j}R,$ with
$\delta $ denoting the Kronecker delta function.
 Here, $\sigma $ represents a scale
parameter. Note that, in the Gaussian case,
$E\|\boldsymbol{\varepsilon}\|_{H^{n}}^{2}=n(\mbox{trace}(R))<\infty
,$ with $\mbox{trace}(\cdot )$ denoting the trace of an operator,
which implies that $R$ is in the trace class (see, for example,
\cite{Da Prato}, Chapter 1).

This paper extends the results derived in  \cite{Zoglat} to an arbitrary
Hilbert space $H$ (not necessarily given by $L_{2}([0,1])$), and to the case where
$\boldsymbol{\varepsilon}$ has correlated $H$-valued zero-mean Gaussian components.
Specifically,  a generalized
least-squares estimator of $H^{p}-$valued parameter
$\boldsymbol{\beta }$ is obtained. The functional mean-square error
is computed  in the RKHS norm. It is proved that, for an orthogonal fixed effect design
matrix, the
statistics minimizing the functional quadratic loss function  takes its values
in the functional parameter space $H^{p}.$
The analysis developed here is referred to a common  orthonormal eigenvector system, which is assumed to be known, providing the spectral diagonalization  of the covariance
operators of the error components. This assumption is satisfied, for example,  by the system of stochastic differential or
pseudodifferential equations
 introduced in Section \ref{Exm}, with fixed effect $H$-valued parameters. In this case, the common  orthonormal basis of eigenvectors of $H$ can be determined from
 the differential or pseudodifferential operators defining  such a system of equations (see
 Section \ref{Exm} below).

Another important issue addressed in this paper is the
construction  of a  matrix operator providing a suitable functional linear
 transformation of our observed $H^{n}$-valued response (see Section
\ref{secW}), in order to
ensure the almost surely finiteness of the total sum of squares, the
sum of squares due to regression, and the residual sum of squares.
Under this transformation, the moment generating and characteristic
functionals of these three statistics are derived. Linear
hypothesis testing is also addressed, in terms of a suitable matrix operator  class defining a linear transformation  of the $H$-valued components of $\boldsymbol{\beta },$ to test some contrasts.

The outline of the paper is as follows. Section \ref{Sec2}
introduces the analyzed Hilbert-valued multivariate Gaussian fixed
effect model with  correlated  error components.  In Section
\ref{sec3}, the generalized least-squares estimator of the
$H^{p}$-valued fixed effect parameter $\boldsymbol{\beta }$ is
derived, providing  sufficient conditions  for the almost surely
finiteness of its $H^{p}$-norm. The transformed functional data
model is constructed in Section \ref{sec4}.  The almost surely finiteness of the functional
components of variance is then proved.  Their
moment generating and characteristic functionals are derived  in Section \ref{sec5}. Linear hypothesis testing is
addressed in Section \ref{FLHT} in a multivariate Hilbert-valued
Gaussian framework.   Final comments are provided in Section
\ref{Sec6}.
\section{The model}
\label{Sec2} Let $H$ be a real separable Hilbert space endowed with
the inner product $\left\langle \cdot,\cdot\right\rangle_{H}.$
Consider the following Hilbert-valued multivariate fixed effect
model:
\begin{equation}\mathbf{Y}(\cdot)=\mathbf{X}\boldsymbol{\beta
}(\cdot)+\sigma \boldsymbol{\varepsilon}(\cdot),\label{eq1} \end{equation}
\noindent where
$\mathbf{Y}(\cdot)=[Y_{1}(\cdot),\dots,Y_{n}(\cdot)]^{T}$ is an $H^{n}$-valued Gaussian  random variable,
with $E[\mathbf{Y}]=\mathbf{X}\boldsymbol{\beta }$. The $H^{n}$-valued error term $\boldsymbol{\varepsilon}(\cdot)=[\varepsilon_{1}(\cdot),\dots,\varepsilon_{n}(\cdot)]^{T}$ is such that
$E[\boldsymbol{\varepsilon}]=\underline{\mathbf{0}},$ and has covariance matrix  operator \begin{eqnarray}
&&\hspace*{-1cm}\mathbf{R}_{\boldsymbol{\varepsilon}\boldsymbol{\varepsilon}}=E\left[[\varepsilon_{1}(\cdot),\dots,\varepsilon_{n}(\cdot)]^{T}
[\varepsilon_{1}(\cdot),\dots,\varepsilon_{n}(\cdot)]\right]
\nonumber\\
& &\hspace*{-1cm}=\left[\begin{array}{ccc}E[\varepsilon_{1}\otimes \varepsilon_{1}]&,\dots,&
E[\varepsilon_{1}\otimes \varepsilon_{n}]\\
\vdots & \vdots & \vdots \\
E[\varepsilon_{n}\otimes \varepsilon_{1}]&,\dots,&
E[\varepsilon_{n}\otimes \varepsilon_{n}]
\end{array}\right]=\left[\begin{array}{ccc}R_{\varepsilon_{1}\varepsilon_{1}}&,\dots,&
R_{\varepsilon_{1}\varepsilon_{n}}\\
\vdots & \vdots & \vdots \\
R_{\varepsilon_{n}\varepsilon_{1}}&,\dots,&
R_{\varepsilon_{n}\varepsilon_{n}}
\end{array}\right],\label{covest2}
\end{eqnarray}

\noindent where $R_{\varepsilon_{i}\varepsilon_{i}}$ $i=1,\dots,n,$ are compact and self-adjoint operators on $H,$ in the trace class.
As before, $\sigma $ represents a scale parameter. In the subsequent development, we assume that $R_{\varepsilon_{i}\varepsilon_{i}}$ $i=1,\dots,n,$ are strictly positive.
 For $i\neq j,$ with $i,j\in \{1,\dots,n\},$ $R_{\varepsilon_{i}\varepsilon_{j}}$ denotes the cross-covariance operator between $\varepsilon_{i}$ and $\varepsilon_{j}.$
Here, $\boldsymbol{\beta
}(\cdot)=[\beta_{1}(\cdot),\dots,\beta_{p}(\cdot)]^{T}\in H^{p},$
 and $\mathbf{X}$ is  a real-valued
$n\times p$ matrix, the fixed effect design matrix.
\begin{remark}
Note that the Gaussian error term of model
(\ref{eq1}) has covariance matrix operator  (\ref{covest2}), which
in the particular case of
$R_{\varepsilon_{i}\varepsilon_{j}}\equiv\mathbf{0},$ for $i\neq j,$
and $i,j\in \{1,\dots,n\},$ and
$R_{\varepsilon_{i}\varepsilon_{i}}=R,$ for $i=1,\dots,n,$ coincides
with the covariance operator (\ref{covest1}) of the error term in
(\ref{fem}), as given  in \cite{Zoglat}. \textcolor{blue}{Thus, equation  (\ref{eq1})
provides an extension  of model (\ref{fem}), going beyond the space   $L^{2}([0,1]),$ and the independence and homoscedastic assumptions.}
\end{remark}
\begin{remark}
Further research can be developed  considering the non-linear and non-gaussian case, and a more flexible class of  design matrices.
 Fixed design  assumption is very common in the
standard multiple (finite-dimensional) regression theory, as well as
it can be justified in the functional regression setting (see, for
example, \cite{Cuevas14}). Partial linear regression is formulated
in the functional setting  in \cite{Aneiros-Perez},  displaying a
better performance
 than   the
functional linear regression and functional nonparametric
regression (see, for example,
\cite{Aneiros-Perez08}; \cite{Zhang12}; \cite{Zhou12}).  Recent advances have extended the linear
approach by combining it with link functions,  considering
multiple indices. But this approach still requires to be improved. The authors in \cite{Chen11}  introduce a new technique for estimating
the link function in a nonparametric framework. An approach
to multi-index modeling using adaptively defined linear projections
of functional data is proposed, which  enables prediction with
polynomial convergence rates. In
\cite{Goia14}, the framework of functional regression
modeling with scalar response is considered. The unknown regression operator  is approximated in a semi-parametric
way  through a single index approach. Possible structural changes are taken into account in the presented approach. In particular, non-smooth functional
directions and additive link functions are used for managing ruptures by applying Single Index Model.
\end{remark}

In the subsequent development, the following assumption is made:

\medskip

\noindent{\bfseries Assumption A0.} The auto-covariance operators
$R_{\varepsilon_{i} \varepsilon_{i}},$ $i=1,\dots n,$ admit a
spectral decomposition in terms of a common complete orthogonal
eigenvector system $\{\phi_{k}\}_{k\geq 1}$ defining in  $H$ the
resolution of the identity $\sum_{k=1}^{\infty }\phi_{k}\otimes
\phi_{k}.$  Let  $\{\eta_{ki},\ k\geq 1\},$ $i=1,\dots,n,$ be the
standard Gaussian random variable sequences such that
$\{\left\langle \varepsilon_{i},
\phi_{k}\right\rangle_{H}=\sqrt{\lambda_{ki}}\eta_{ki},\ k\geq 1\},$
 with $R_{\varepsilon_{i}\varepsilon_{i}}\phi_{k}=\lambda_{ki}\phi_{k},$ for $i=1,\dots,n.$
 The following
orthogonality condition is assumed to be satisfied:
\begin{equation}E[\eta_{ki}\eta_{pj}]=\delta_{k,p},\quad k,p\in \mathbb{N}-\{0\}
\quad i,j=1,\dots,n,\label{eqorth}
\end{equation} \noindent where $\delta $ denotes, as before, the
Kronecker delta function.

\begin{remark}
{\bfseries Assumption A0}  provides a semiparametric definition of the elements
of the  class of covariance matrix  operators characterizing the correlation structure of the
 functional vector error term. Specifically, these elements  admit   an infinite series representation
in terms of a sequence of finite-dimensional matrices
$\{\boldsymbol{\Lambda}_{k},\ k\geq 1\}$ (the parametric part),
introduced in equation (\ref{eqprocovop}) below, with respect to a
resolution of the identity of the Hilbert space $H$ (the non-parametric part), given by  $\{ \phi_{k},\ k\geq 1\}.$
\end{remark}
\bigskip

 Under {\bfseries Assumption A0}, the following  orthogonal
expansions hold for $\varepsilon_{i},$ $i=1,\dots, n,$ in terms of
their common covariance operator eigenvector system $\{
\phi_{k},\ k\geq 1\}:$
\begin{equation}
\varepsilon_{i}=\sum_{k=1}^{\infty
}\sqrt{\lambda_{ki}}\eta_{ki}\phi_{k},\quad i=1,\dots, n.
\label{KLexpansion}
\end{equation}
In addition, from (\ref{KLexpansion}), using the assumed orthogonality condition (\ref{eqorth}) \begin{equation}R_{\varepsilon_{i} \varepsilon_{j}}=E\left[
\varepsilon_{i}\otimes \varepsilon_{j}\right]=\sum_{k=1}^{\infty }\sqrt{\lambda_{ki}\lambda_{kj}}\phi_{k}\otimes \phi_{k},\quad i,j=1,\dots,n.\label{sdcco}
\end{equation}
\noindent  Hence,   the covariance matrix  operator
  (\ref{covest2}) can be rewritten as:
 \begin{equation}\boldsymbol{R}_{\boldsymbol{\varepsilon }
\boldsymbol{\varepsilon }}=\left[\begin{array}{cccc}
\sum_{k=1}^{\infty }\lambda_{k1}\phi_{k}\otimes \phi_{k} & \dots &
\dots & \sum_{k=1}^{\infty
}[\lambda_{k1}\lambda_{kn}]^{1/2}\phi_{k}\otimes \phi_{k}\\
\vdots & \vdots & \vdots & \vdots \\
\vdots & \vdots & \vdots & \vdots \\
\sum_{k=1}^{\infty }[\lambda_{kn}\lambda_{k1}]^{1/2}\phi_{k}\otimes
\phi_{k} & \dots & \dots & \sum_{k=1}^{\infty
}\lambda_{kn}\phi_{k}\otimes \phi_{k} \\
\end{array}
\right].\label{covopsd}
\end{equation}

\subsection{\bfseries Example}
\label{Exm}

An example of   zero-mean Gaussian $H^{n}$-valued  random variable
$\boldsymbol{\varepsilon}$  satisfying {\bfseries Assumption A0} is
now  constructed considering the space $H=L^{2}(\mathbb{R}^{d}).$
Specifically, define $\boldsymbol{\varepsilon}$ as the
$[L^{2}(\mathbb{R}^{d})]^{n}$-valued zero-mean Gaussian
 solution, in the mean-square sense, of the system of stochastic fractional pseudodifferential equations
\begin{equation}f_{i}(\mathcal{L})\varepsilon_{i}=\epsilon_{i},\quad i=1,\dots,n,\label{sdeex}
\end{equation}
\noindent where $\mathcal{L}$ is an elliptic fractional pseudodifferential operator on $L^{2}(\mathbb{R}^{d})$ of  positive order  $s\in \mathbb{R}_{+},$
whose inverse belongs to the Hilbert-Schmidt operator class  on $L^{2}(\mathbb{R}^{d}),$ and  $f_{i},$ $i=1,\dots,n,$ are continuous functions. Here,
$\epsilon_{i},$ $i=1,\dots,n,$ are zero-mean Gaussian $H$-valued  random variables such that $\eta_{ki}=\left\langle \phi_{k},\epsilon_{i}\right\rangle_{L^{2}(\mathbb{R}^{d})},$ $k\geq 1,\quad i=1,\dots,n,$
 satisfying the orthogonality condition (\ref{eqorth}).
From spectral theorems on functional calculus for self-adjoint operators on a Hilbert space $H$ (see, for example,  \cite{Dautray},  pp. 112-126),
the covariance operators $R_{\varepsilon_{i}\varepsilon_{i}}=[f_{i}(\mathcal{L})]^{-1}[[f_{i}(\mathcal{L})]^{-1}]^{*},$ $i=1,\dots,n,$
 with $A^{*}$ denoting the adjoint of operator $A,$ admit a spectral kernel representation in terms of the eigenvectors $\{\phi_{k},\  k \geq 1\}$
of operator $\mathcal{L}$ in the following form: For $i=1,\dots,n,$ and for  $h\in H=L^{2}(\mathbb{R}^{d}),$
\begin{eqnarray}
R_{\varepsilon_{i}\varepsilon_{i}}(h)(\mathbf{x})&=&\sum_{k=1}^{\infty
}\left| f_{i}(\lambda_{k}(\mathcal{L}))\right|^{-2}\left\langle
\phi_{k},h\right\rangle_{L^{2}(\mathbb{R}^{d})}\phi_{k}(\mathbf{x}),\quad
\mathbf{x}\in \mathbb{R}^{d},\label{excov}
\end{eqnarray}
\noindent where $\lambda_{k}(\mathcal{L}),$ $k\geq 1,$ are the
eigenvalues of operator $\mathcal{L}.$ In addition,
under the orthogonality condition (\ref{eqorth}), for  $h\in
H=L^{2}(\mathbb{R}^{d}),$
\begin{equation}
R_{\varepsilon_{i}\varepsilon_{j}}(h)(\mathbf{x})=
\sum_{k=1}^{\infty }\left|
f_{i}(\lambda_{k}(\mathcal{L}))f_{j}(\lambda_{k}(\mathcal{L}))\right|^{-1}
\left\langle
\phi_{k},h\right\rangle_{L^{2}(\mathbb{R}^{d})}\phi_{k}(\mathbf{x}),\quad
\mathbf{x}\in \mathbb{R}^{d}.\label{excov2}
\end{equation}
\vspace*{0.5cm}

Let us now fix some notation and preliminary results for the
subsequent development. By $\mathcal{H}=H^{n}$ we will denote, as
before, the Hilbert space of vector functions in $H^{n}$ with the
inner product $\left\langle \mathbf{f},\mathbf{g}\right\rangle_{\mathcal{H}}=
\sum_{i=1}^{n}\left\langle f_{i},g_{i}\right\rangle_{H},$
for $\mathbf{f}=[f_{1},\dots ,f_{n}]^{T},\mathbf{g}=[g_{1},\dots
,g_{n}]^{T}\in \mathcal{H}.
$
Given an orthonormal system $\{\phi_{k},\ k\geq 1\}$ of $H,$ we
denote by $\boldsymbol{\Phi}^{*}=(\Phi_{k}^{*})_{k\geq 1},$ with
 $\Phi_{k}^{*}:
H^{n}\longrightarrow \mathbb{R}^{n},$
the
projection operator into such a system of the components of any
vector function $\mathbf{f}\in H^{n}$ as follows:
\begin{eqnarray}
\boldsymbol{\Phi}^{*}(\mathbf{f})&=&(\Phi_{k}^{*}(\mathbf{f}))_{k\geq
1}=(\mathbf{f}_{k})_{k\geq
1}\nonumber\\&=& \left(\left( \left\langle
f_{1},\phi_{k}\right\rangle_{H},\dots, \left\langle
f_{n},\phi_{k}\right\rangle_{H} \right)^{T}\right)_{k\geq
1}=\left(\left(f_{k1},\dots,f_{kn}\right)^{T}\right)_{k\geq 1}.
\label{eqproj1}
\end{eqnarray}
 The inverse operator
$\boldsymbol{\Phi}$  of $\boldsymbol{\Phi}^{*}$ satisfies
$\boldsymbol{\Phi}\boldsymbol{\Phi}^{*}=\mathbf{I}_{H^{n}},$ i.e.,
$\boldsymbol{\Phi}: [l^{2}]^{n}\longrightarrow H^{n},$ is given by

\begin{equation}\boldsymbol{\Phi}\left(\left(\left(f_{k1},\dots,f_{kn}\right)^{T}\right)_{k\geq
1}\right)=\left( \sum_{k=1}^{\infty
}f_{k1}\phi_{k},\dots,\sum_{k=1}^{\infty
}f_{kn}\phi_{k}\right)^{T},\label{eqproj2b}
\end{equation}
\noindent for all
$\left(\left(f_{k1},\dots,f_{kn}\right)^{T}\right)_{k\geq 1}\in
[l^{2}]^{n}.$ Let us consider a  matrix operator $\boldsymbol{\mathcal{A}}$ with
functional entries defined by the operators
$A_{i,j}=\sum_{k=1}^{\infty }\gamma_{kij}\phi_{k}\otimes \phi_{k},$
 with $\sum_{k=1}^{\infty
}\gamma_{kij}^{2}<\infty,$  and
$A_{i,j}(f)(g)=\sum_{k=1}^{\infty }\gamma_{kij}\left\langle
f,\phi_{k}\right\rangle_{H}\left\langle
g,\phi_{k}\right\rangle_{H},\quad i,j=1,\dots,n,$ for $f,g\in
H.$ Then, the following notation will  also be used
\begin{eqnarray}
\boldsymbol{\Phi}^{*}\boldsymbol{\mathcal{A}}\boldsymbol{\Phi}&=&
\left(\left[\begin{array}{ccc}\gamma_{k11} &\dots
 &\gamma_{k1n}\\
\dots & \dots & \dots \\
\gamma_{kn1} &\dots & \gamma_{knn}
\end{array}\right]\right)_{k\geq 1}.
\label{eqmpn}
\end{eqnarray}
\noindent Reciprocally,  given an infinite sequence of $n\times n$
matrices $\left(\left[\begin{array}{ccc}\gamma_{k11} &\dots
 &\gamma_{k1n}\\
\dots & \dots & \dots \\
\gamma_{kn1} &\dots & \gamma_{knn}
\end{array}\right]\right)_{k\geq 1},$
\begin{eqnarray}
& & \boldsymbol{\Phi}\left(\left[\begin{array}{ccc}\gamma_{k11}
&\dots
 &\gamma_{k1n}\\
\dots & \dots & \dots \\
\gamma_{kn1} &\dots & \gamma_{knn}
\end{array}\right]\right)_{k\geq 1}\boldsymbol{\Phi}^{*}\nonumber\\
&& \nonumber\\ & &=\left[\begin{array}{ccc}\sum_{k=1}^{\infty
}\gamma_{k11}\phi_{k}\otimes \phi_{k} &\dots
 &\sum_{k=1}^{\infty
}\gamma_{k1n}\phi_{k}\otimes \phi_{k}\\
\dots & \dots & \dots \\
\sum_{k=1}^{\infty }\gamma_{kn1}\phi_{k}\otimes \phi_{k} &\dots &
\sum_{k=1}^{\infty }\gamma_{knn}\phi_{k}\otimes \phi_{k}
\end{array}\right].
\label{eqmpn2}
\end{eqnarray}
 Thus, from  (\ref{covopsd}), considering
 (\ref{eqmpn})--(\ref{eqmpn2}),
\begin{equation}\boldsymbol{\Phi}^{*}\boldsymbol{R}_{\boldsymbol{\varepsilon }
\boldsymbol{\varepsilon }}\boldsymbol{\Phi}=
\left(\left[\begin{array}{ccc}\lambda_{k1} &\dots
 &\sqrt{\lambda_{k1}\lambda_{kn}}\\
\dots & \dots & \dots \\
\sqrt{\lambda_{kn}\lambda_{k1}}&\dots & \lambda_{kn}
\end{array}\right]\right)_{k\geq 1}=\left(\boldsymbol{\Lambda
}_{k}\right)_{k\geq 1}, \label{eqprocovop}
\end{equation}
\noindent and from equations (\ref{eqproj1}) and (\ref{eqproj2b}),
\begin{eqnarray}
\boldsymbol{R}_{\boldsymbol{\varepsilon } \boldsymbol{\varepsilon
}}(\mathbf{f})(\mathbf{g})&=&
\boldsymbol{\Phi}\boldsymbol{\Phi}^{*}\boldsymbol{R}_{\boldsymbol{\varepsilon
} \boldsymbol{\varepsilon
}}\boldsymbol{\Phi}(\boldsymbol{\Phi}^{*}\mathbf{f})(\mathbf{g})
=\boldsymbol{\Phi}^{*}\boldsymbol{R}_{\boldsymbol{\varepsilon }
\boldsymbol{\varepsilon }}\boldsymbol{\Phi}(\boldsymbol{\Phi}^{*}
\mathbf{f})(\boldsymbol{\Phi}^{*}\mathbf{g})
\nonumber\\
&=&\sum_{k=1}^{\infty }[\left\langle
g_{1},\phi_{k}\right\rangle_{H},\dots,\left\langle
g_{n},\phi_{k}\right\rangle_{H}] \boldsymbol{\Lambda
}_{k}[\left\langle
f_{1},\phi_{k}\right\rangle_{H},\dots,\left\langle
f_{n},\phi_{k}\right\rangle_{H}]^{T}
\nonumber\\
&=&\sum_{k=1}^{\infty }\mathbf{g}_{k}^{T}\boldsymbol{\Lambda }_{k}
\mathbf{f}_{k}= \left\langle \boldsymbol{\Phi}\boldsymbol{\Lambda
}^{1/2}\boldsymbol{\Phi}^{*}(\mathbf{f}),\boldsymbol{\Phi}\boldsymbol{\Lambda
}^{1/2}\boldsymbol{\Phi}^{*}(\mathbf{g})\right\rangle_{H^{n}},
\label{covopme2}
\end{eqnarray}
\noindent for all  $\mathbf{f},$ $\mathbf{g} \in \mathcal{H}=H^{n},$
with $\boldsymbol{\Lambda }^{1/2}:=(\boldsymbol{\Lambda
}^{1/2}_{k})_{k\geq 1},$ $\mathbf{g}_{k}=(g_{k1},\dots, g_{kn})^{T}$
and $\mathbf{f}_{k}=(f_{k1},\dots, f_{kn})^{T},$ for every $k\geq
1.$ Thus, $\boldsymbol{\Phi}^{*}\boldsymbol{R}_{\boldsymbol{\varepsilon } \boldsymbol{\varepsilon
}}^{1/2}\boldsymbol{\Phi}=\left(\boldsymbol{\Lambda
}^{1/2}_{k}\right)_{k\geq 1}=\boldsymbol{\Lambda }^{1/2}.$
Note that from Cauchy-Schwarz inequality,
\begin{equation}
\sum_{k=1}^{\infty }[\lambda_{ki}\lambda_{kj}]^{1/2}\leq
\left[\sum_{k=1}^{\infty
}\lambda_{ki}\right]^{1/2}\left[\sum_{k=1}^{\infty
}\lambda_{kj}\right]^{1/2}<\infty,\quad i,j=1,\dots,n,
\label{SI}
\end{equation}
\noindent since $E\|\varepsilon_{i}\|_{H}^{2}=\sum_{k=1}^{\infty
}\lambda_{ki}<\infty,$  for $i=1,\dots,n,$ and hence,
$R_{\varepsilon_{i}\varepsilon_{i}},$ $i=1,\dots,n,$ are  positive
self-adjoint trace covariance operators, i.e., $\lambda_{ki}> 0,$
$k\geq 1,$ and $\sum_{k=1}^{\infty }\lambda_{ki}<\infty,$ for
$i=1,\dots,n.$ Consequently,
 \begin{equation}\mbox{trace}\left(\sum_{k=1}^{\infty
}\boldsymbol{\Lambda }_{k}\right)= \sum_{k=1}^{\infty
}\mbox{trace}\left(\boldsymbol{\Lambda
}_{k}\right)<\infty.\label{tracecvo}
\end{equation}
From equation (\ref{covopme2}), for every
$\mathbf{f}=(f_{1},\dots,f_{n})^{T}\in \mathcal{H}=H^{n},$
\begin{equation}\mathbf{g}(\cdot)=\boldsymbol{\Phi}\left(
\left(\boldsymbol{\Lambda }_{k}^{1/2}\mathbf{f}_{k}\right)_{k\geq
1}\right)\in \boldsymbol{R}_{\boldsymbol{\varepsilon }
\boldsymbol{\varepsilon }}^{1/2}(\mathcal{H}).\label{RKHSprocc}
\end{equation}
\noindent  Hence, $\boldsymbol{\Phi}^{*}(\mathbf{g})=
\left(\boldsymbol{\Lambda }_{k}^{1/2}\mathbf{f}_{k}\right)_{k\geq
1},$ and
\begin{equation}
\boldsymbol{\Phi}^{*}\boldsymbol{\mathcal{Q}}\boldsymbol{\Phi}=\left(\boldsymbol{\Lambda
}_{k}^{-1/2}\right)_{k\geq 1}:
\boldsymbol{R}_{\boldsymbol{\varepsilon } \boldsymbol{\varepsilon
}}^{1/2}(\mathcal{H})=\mathcal{H}(\boldsymbol{\varepsilon})\longrightarrow \mathcal{H},\ \boldsymbol{\Phi}^{*}\boldsymbol{\mathcal{Q}}\mathbf{g}= \left(\mathbf{f}_{k}\right)_{k\geq
1}.\label{isqr}
\end{equation}
\begin{lemma} \label{lem10} The inverse $\boldsymbol{R}_{\boldsymbol{\varepsilon }
\boldsymbol{\varepsilon }}^{-1}$ of the matrix covariance
operator satisfies
\begin{equation} \boldsymbol{R}_{\boldsymbol{\varepsilon } \boldsymbol{\varepsilon
}}^{-1}(\boldsymbol{\psi})(\boldsymbol{\varphi})=\sum_{k=1}^{\infty
} \boldsymbol{\varphi}_{k}^{T} \boldsymbol{\Lambda
}_{k}^{-1}\boldsymbol{\psi}_{k}, \label{covopme2bb}
\end{equation}
\noindent for all $\boldsymbol{\psi}, \boldsymbol{\varphi}\in
\boldsymbol{R}_{\boldsymbol{\varepsilon } \boldsymbol{\varepsilon
}}^{1/2}(\mathcal{H}).$
 Equivalently,
$\boldsymbol{R}_{\boldsymbol{\varepsilon } \boldsymbol{\varepsilon
}}^{-1}$ is such that
\begin{equation}\boldsymbol{\Phi}^{*}\boldsymbol{R}_{\boldsymbol{\varepsilon } \boldsymbol{\varepsilon
}}^{-1}\boldsymbol{\Phi}=\left(\boldsymbol{\Lambda
}_{k}^{-1}\right)_{k\geq 1}.\label{sil1}
\end{equation}
\end{lemma}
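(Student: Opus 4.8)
The plan is to verify the claimed identity by passing everything through the coordinate isomorphism $\boldsymbol{\Phi}^{*}$, i.e. by working with the sequences of finite-dimensional matrices $(\boldsymbol{\Lambda}_{k})_{k\geq 1}$ attached to $\boldsymbol{R}_{\boldsymbol{\varepsilon}\boldsymbol{\varepsilon}}$ via (\ref{eqprocovop}). Since each $\boldsymbol{\Lambda}_{k}$ is an $n\times n$ matrix of the form $\boldsymbol{\lambda}_{k}\boldsymbol{\lambda}_{k}^{T}$ with $\boldsymbol{\lambda}_{k}=(\sqrt{\lambda_{k1}},\dots,\sqrt{\lambda_{kn}})^{T}$, and each $\lambda_{ki}>0$ by strict positivity of the $R_{\varepsilon_{i}\varepsilon_{i}}$, the matrix $\boldsymbol{\Lambda}_{k}$ is a nonnegative rank-one matrix; however, what matters for the inverse is that on the range $\boldsymbol{R}_{\boldsymbol{\varepsilon}\boldsymbol{\varepsilon}}^{1/2}(\mathcal{H})$, restricted to the $k$-th block, $\boldsymbol{\Lambda}_{k}^{1/2}$ acts invertibly, so $\boldsymbol{\Lambda}_{k}^{-1}$ is the well-defined inverse of $\boldsymbol{\Lambda}_{k}$ as an operator on that block of the RKHS. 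I would first recall from (\ref{covopme2}) that for $\mathbf{f},\mathbf{g}\in\mathcal{H}$ one has $\boldsymbol{R}_{\boldsymbol{\varepsilon}\boldsymbol{\varepsilon}}(\mathbf{f})(\mathbf{g})=\sum_{k}\mathbf{g}_{k}^{T}\boldsymbol{\Lambda}_{k}\mathbf{f}_{k}$, and from (\ref{isqr}) that $\boldsymbol{\Phi}^{*}\boldsymbol{\mathcal{Q}}\boldsymbol{\Phi}=(\boldsymbol{\Lambda}_{k}^{-1/2})_{k\geq 1}$ is the inverse of $\boldsymbol{\Phi}^{*}\boldsymbol{R}_{\boldsymbol{\varepsilon}\boldsymbol{\varepsilon}}^{1/2}\boldsymbol{\Phi}=(\boldsymbol{\Lambda}_{k}^{1/2})_{k\geq 1}$ on the appropriate domain.

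The core computation is then purely formal. Given $\boldsymbol{\psi},\boldsymbol{\varphi}\in\boldsymbol{R}_{\boldsymbol{\varepsilon}\boldsymbol{\varepsilon}}^{1/2}(\mathcal{H})$, write $\boldsymbol{\psi}=\boldsymbol{R}_{\boldsymbol{\varepsilon}\boldsymbol{\varepsilon}}^{1/2}(\mathbf{u})$ and $\boldsymbol{\varphi}=\boldsymbol{R}_{\boldsymbol{\varepsilon}\boldsymbol{\varepsilon}}^{1/2}(\mathbf{v})$ for suitable $\mathbf{u},\mathbf{v}\in\mathcal{H}$, so that $\boldsymbol{\psi}_{k}=\boldsymbol{\Lambda}_{k}^{1/2}\mathbf{u}_{k}$ and $\boldsymbol{\varphi}_{k}=\boldsymbol{\Lambda}_{k}^{1/2}\mathbf{v}_{k}$ in coordinates. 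Then $\boldsymbol{R}_{\boldsymbol{\varepsilon}\boldsymbol{\varepsilon}}^{-1}(\boldsymbol{\psi})(\boldsymbol{\varphi})$ should, by the defining property of the inverse covariance operator on the RKHS, equal $\langle\boldsymbol{\mathcal{Q}}\boldsymbol{\psi},\boldsymbol{\mathcal{Q}}\boldsymbol{\varphi}\rangle_{\mathcal{H}}=\sum_{k}(\boldsymbol{\Lambda}_{k}^{-1/2}\boldsymbol{\varphi}_{k})^{T}(\boldsymbol{\Lambda}_{k}^{-1/2}\boldsymbol{\psi}_{k})=\sum_{k}\boldsymbol{\varphi}_{k}^{T}\boldsymbol{\Lambda}_{k}^{-1}\boldsymbol{\psi}_{k}$, using symmetry of $\boldsymbol{\Lambda}_{k}^{-1/2}$. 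This gives (\ref{covopme2bb}). Equation (\ref{sil1}) then follows immediately by reading off the $k$-th block: since $\boldsymbol{\Phi}^{*}\boldsymbol{R}_{\boldsymbol{\varepsilon}\boldsymbol{\varepsilon}}^{-1}\boldsymbol{\Phi}$ is the bilinear form $(\mathbf{u}_{k})_{k},(\mathbf{v}_{k})_{k}\mapsto\sum_{k}\mathbf{v}_{k}^{T}\boldsymbol{\Lambda}_{k}^{-1}\mathbf{u}_{k}$ on the coordinate side, it is represented by the block-diagonal sequence $(\boldsymbol{\Lambda}_{k}^{-1})_{k\geq 1}$, and one checks $(\boldsymbol{\Lambda}_{k}^{-1})_{k}\,(\boldsymbol{\Lambda}_{k})_{k}=(\mathbf{I}_{n})_{k}$ on the range.

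The main obstacle is not algebraic but domain-theoretic: $\boldsymbol{R}_{\boldsymbol{\varepsilon}\boldsymbol{\varepsilon}}$ is trace-class, hence compact, so $\boldsymbol{R}_{\boldsymbol{\varepsilon}\boldsymbol{\varepsilon}}^{-1}$ is unbounded and is only densely defined on $\boldsymbol{R}_{\boldsymbol{\varepsilon}\boldsymbol{\varepsilon}}^{1/2}(\mathcal{H})=\mathcal{H}(\boldsymbol{\varepsilon})$, the RKHS. I would therefore be careful to interpret (\ref{covopme2bb}) as an identity of the associated bilinear (inner-product) form on the RKHS rather than as an operator identity on all of $\mathcal{H}$, and to note that the series $\sum_{k}\boldsymbol{\varphi}_{k}^{T}\boldsymbol{\Lambda}_{k}^{-1}\boldsymbol{\psi}_{k}$ converges precisely because $\boldsymbol{\psi},\boldsymbol{\varphi}\in\boldsymbol{R}_{\boldsymbol{\varepsilon}\boldsymbol{\varepsilon}}^{1/2}(\mathcal{H})$ forces $\sum_{k}\|\boldsymbol{\Lambda}_{k}^{-1/2}\boldsymbol{\psi}_{k}\|^{2}<\infty$ and similarly for $\boldsymbol{\varphi}$, so Cauchy–Schwarz in $[l^{2}]^{n}$ applies. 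A secondary subtlety is the invertibility of each $\boldsymbol{\Lambda}_{k}$: although $\boldsymbol{\Lambda}_{k}=\boldsymbol{\lambda}_{k}\boldsymbol{\lambda}_{k}^{T}$ looks rank-one, what is actually needed is that on the $k$-th coordinate block of the RKHS (which is the range of $\boldsymbol{\Lambda}_{k}^{1/2}$) the operator $\boldsymbol{\Lambda}_{k}^{1/2}$ — and hence $\boldsymbol{\Lambda}_{k}$ — is a bijection, so its inverse on that block is well-defined; I would state this explicitly and invoke (\ref{isqr}) to identify $\boldsymbol{\Lambda}_{k}^{-1/2}$ with the coordinate action of $\boldsymbol{\mathcal{Q}}$, after which $\boldsymbol{\Lambda}_{k}^{-1}=(\boldsymbol{\Lambda}_{k}^{-1/2})^{2}$ is unambiguous and the proof closes.
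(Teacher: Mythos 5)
Your proof is correct and takes essentially the same route as the paper: both represent $\boldsymbol{\psi},\boldsymbol{\varphi}$ as images under $\boldsymbol{R}_{\boldsymbol{\varepsilon}\boldsymbol{\varepsilon}}^{1/2}$, pass to coordinates via $\boldsymbol{\Phi}^{*}$, and identify $\boldsymbol{R}_{\boldsymbol{\varepsilon}\boldsymbol{\varepsilon}}^{-1}(\boldsymbol{\psi})(\boldsymbol{\varphi})$ with $\left\langle \boldsymbol{\mathcal{Q}}\boldsymbol{\psi},\boldsymbol{\mathcal{Q}}\boldsymbol{\varphi}\right\rangle_{\mathcal{H}}=\sum_{k}\boldsymbol{\varphi}_{k}^{T}\boldsymbol{\Lambda}_{k}^{-1}\boldsymbol{\psi}_{k}$ using $\boldsymbol{\Phi}^{*}\boldsymbol{\mathcal{Q}}\boldsymbol{\Phi}=(\boldsymbol{\Lambda}_{k}^{-1/2})_{k\geq 1}$. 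Your added caveats --- reading the identity as a bilinear form on the RKHS rather than an operator identity on $\mathcal{H}$, and noting that under the orthogonality condition (\ref{eqorth}) each $\boldsymbol{\Lambda}_{k}$ is the rank-one matrix $\boldsymbol{\lambda}_{k}\boldsymbol{\lambda}_{k}^{T}$, so that $\boldsymbol{\Lambda}_{k}^{-1}$ must be interpreted as the inverse on the range of $\boldsymbol{\Lambda}_{k}^{1/2}$ --- are legitimate refinements that the paper leaves implicit.
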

\begin{proof}
From equations (\ref{covopme2}) and (\ref{RKHSprocc}),
$\boldsymbol{R}_{\boldsymbol{\varepsilon } \boldsymbol{\varepsilon
}}^{1/2}(\mathcal{H})=\boldsymbol{\Phi}\boldsymbol{\Lambda
}^{1/2}(\boldsymbol{\Phi}^{*}(H^{n})).$
In particular, for every $\boldsymbol{\psi}, \boldsymbol{\varphi}\in
\boldsymbol{R}_{\boldsymbol{\varepsilon } \boldsymbol{\varepsilon
}}^{1/2}(\mathcal{H}),$ there exist $\mathbf{f}$ and $\mathbf{g}\in
\mathcal{H}=H^{n}$ such that
\begin{eqnarray}
\boldsymbol{\psi}&=&\boldsymbol{\Phi}\boldsymbol{\Lambda
}^{1/2}\boldsymbol{\Phi}^{*}\mathbf{f}\nonumber\\
\boldsymbol{\varphi}&=&\boldsymbol{\Phi}\boldsymbol{\Lambda
}^{1/2}\boldsymbol{\Phi}^{*}\mathbf{g}.\label{RKHS}
\end{eqnarray}
From equations (\ref{covopme2}), (\ref{isqr}) and (\ref{RKHS})
\begin{eqnarray}
\boldsymbol{\mathcal{Q}}^{*}\boldsymbol{\mathcal{Q}}(\boldsymbol{\psi})(\boldsymbol{\varphi})&=&
\left\langle
\boldsymbol{\mathcal{Q}}(\boldsymbol{\psi}),\boldsymbol{\mathcal{Q}}(\boldsymbol{\varphi})\right\rangle_{H^{n}}
\nonumber\\
&=&\sum_{k=1}^{\infty }\boldsymbol{\psi}_{k}^{T}\boldsymbol{\Lambda
}^{-1}_{k}\boldsymbol{\varphi}_{k} = \sum_{k=1}^{\infty
}\mathbf{f}_{k}^{T}\boldsymbol{\Lambda
}^{1/2}_{k}\boldsymbol{\Lambda
}^{-1}_{k}\boldsymbol{\Lambda }^{1/2}_{k}\mathbf{g}_{k}\nonumber\\
&=& \sum_{k=1}^{\infty
}\mathbf{f}_{k}^{T}\mathbf{g}_{k}=\left\langle
\mathbf{f},\mathbf{g}\right\rangle_{H^{n}}=\mathbf{I}_{H^{n}}(\mathbf{f})(\mathbf{g})=\boldsymbol{R}_{\boldsymbol{\varepsilon
} \boldsymbol{\varepsilon
}}^{-1}(\boldsymbol{\psi})(\boldsymbol{\varphi}),
 \label{RKHS2}
\end{eqnarray}
\noindent where $\mathbf{I}_{H^{n}}$ denotes the identity operator
on $H^{n}.$ From (\ref{RKHS}) and (\ref{RKHS2}), equations
(\ref{covopme2bb}) and (\ref{sil1}) are obtained.
\end{proof}

\begin{remark}
\label{kev}
In the following development we will  assume that the
eigenvector system  $\{\phi_{k},\ k\geq1\}$  and the matrix sequence
 $\{ \boldsymbol{\Lambda}_{k},\ k\geq 1\}$  are known. Thus, we address the problem of least-squares estimation of $H^{p}$-valued parameter $\boldsymbol{\beta},$ under suitable conditions
 on the fixed design matrix $\mathbf{X}$ (see Section \ref{sec3}).
  In the case where $\boldsymbol{\varepsilon }$ is defined by  a system of stochastic partial differential  or  pseudodifferential equations (see  Subsection \ref{Exm}),  equations (\ref{excov})--(\ref{excov2}) show that the functional entries of the covariance matrix operator    (\ref{covest2}) are given in terms of the  eigenvector system of operator $\mathcal{L}$ (which is known), and  the   eigenvalues of $\mathcal{L},$ transformed
  by the   continuous functions $f_{i},$ $i=1,\dots,n,$ defining the system of stochastic differential or  pseudodifferential equations (\ref{sdeex}).
Covariance operator estimation has been addressed in \cite{Bosq}, \cite{Horvath} and \cite{Kokoszka}.
\end{remark}
\section{Generalized least-squares estimation in the
RKHS norm}

\label{sec3} It is well-known (see, for example, \cite{Da Prato},
pp. 12-13) that the RKHS $\mathcal{H}(\boldsymbol{\varepsilon})$ of
$\boldsymbol{\varepsilon}$ is defined as the closure of
$R_{\boldsymbol{\varepsilon}\boldsymbol{\varepsilon}}^{1/2}(\mathcal{H})$
in the norm
$\|\cdot\|_{R_{\boldsymbol{\varepsilon}\boldsymbol{\varepsilon}}^{-1}}$
induced by
$R_{\boldsymbol{\varepsilon}\boldsymbol{\varepsilon}}^{-1}.$
 From Lemma \ref{lem10},
\begin{eqnarray} & &
\| \mathbf{Y}-\mathbf{X}\boldsymbol{\beta
}\|_{\mathbf{R}_{\boldsymbol{\varepsilon}\boldsymbol{\varepsilon}}^{-1}}^{2}=
\mathbf{R}^{-1}_{\boldsymbol{\varepsilon}\boldsymbol{\varepsilon}}(\mathbf{Y}-\mathbf{X}\boldsymbol{\beta
})(\mathbf{Y}-\mathbf{X}\boldsymbol{\beta
})\nonumber\\
& &=
\boldsymbol{\Phi}^{*}\mathbf{R}^{-1}_{\boldsymbol{\varepsilon}\boldsymbol{\varepsilon}}\boldsymbol{\Phi}(\boldsymbol{\Phi}^{*}(\mathbf{Y}-\mathbf{X}\boldsymbol{\beta
}))(\boldsymbol{\Phi}^{*}(\mathbf{Y}-\mathbf{X}\boldsymbol{\beta }))
\nonumber\\
& & =\sum_{k=1}^{\infty }
\left[\mathbf{Y}_{k}-[\mathbf{X}\boldsymbol{\beta
}]_{k}\right]^{T}\boldsymbol{\Lambda
}_{k}^{-1}\left[\mathbf{Y}_{k}-[\mathbf{X}\boldsymbol{\beta
}]_{k}\right] =\sum_{k=1}^{\infty }\| \boldsymbol{\varepsilon
}_{k}(\boldsymbol{\beta }_{k})\|^{2}_{\boldsymbol{\Lambda
}_{k}^{-1}}, \label{eqrnormcn}
\end{eqnarray} \noindent where, as before
$\boldsymbol{\varepsilon}=\mathbf{Y}-\mathbf{X}\boldsymbol{\beta },$
and $\boldsymbol{\varepsilon}_{k}(\boldsymbol{\beta }_{k})=\left[\mathbf{Y}_{k}-[\mathbf{X}\boldsymbol{\beta
}]_{k}\right]=\Phi_{k}^{*}(\mathbf{Y}-\mathbf{X}\boldsymbol{\beta
}),$ $k\geq 1.$ Equation (\ref{eqrnormcn}) is minimized if and only if, for each
$k\geq 1,$ the  norm $\|\cdot \|_{\boldsymbol{\Lambda }_{k}^{-1}}$
of $\varepsilon_{k}(\boldsymbol{\beta }_{k})$ is minimized.
For each $k\geq 1,$ the minimizer of $\| \boldsymbol{\varepsilon
}_{k}(\boldsymbol{\beta }_{k})\|^{2}_{\boldsymbol{\Lambda
}_{k}^{-1}}$ with respect to $\boldsymbol{\beta }_{k}$ is given by
the generalized least squares estimator
\begin{equation}\widehat{\boldsymbol{\beta }}_{k}=(\widehat{\beta}_{k1},\dots,\widehat{\beta}_{kp})^{T}=(\mathbf{X}^{T}\boldsymbol{\Lambda }_{k}^{-1}
\mathbf{X})^{-1}\mathbf{X}^{T}\boldsymbol{\Lambda
}_{k}^{-1}\mathbf{Y}_{k},\quad k\geq 1.\label{lse1} \end{equation}
Hence, the corresponding approximation of the functional
vector parameter $\boldsymbol{\beta }$ is obtained from (\ref{lse1}) by
considering
\begin{equation}\widehat{\boldsymbol{\beta }}=\boldsymbol{\Phi}((\widehat{\boldsymbol{\beta }}_{k})_{k\geq
1})=\left(\sum_{k=1}^{\infty }\widehat{\beta}_{k1}\phi_{k},\dots,
\sum_{k=1}^{\infty
}\widehat{\beta}_{kp}\phi_{k}\right)^{T}.\label{solutlsep}
\end{equation}
Under {\bfseries Assumption A0}, equations
(\ref{eqrnormcn})--(\ref{solutlsep}) provide a statistics minimizing
the functional mean-square error, computed in the RKHS norm of
$\mathcal{H}(\boldsymbol{\varepsilon}).$  This statistics will
define an  estimator of parameter $\boldsymbol{\beta },$ hence, a
generalized least squares estimator, if the following condition is
satisfied:
\begin{equation}
\sum_{k=1}^{\infty}\sum_{i=1}^{p}\widehat{\beta}_{ki}^{2}=
\sum_{k=1}^{\infty} \left[(\mathbf{X}^{T}\boldsymbol{\Lambda
}_{k}^{-1} \mathbf{X})^{-1}\mathbf{X}^{T}\boldsymbol{\Lambda
}_{k}^{-1}\mathbf{Y}_{k}\right]^{T}\left[(\mathbf{X}^{T}\boldsymbol{\Lambda
}_{k}^{-1} \mathbf{X})^{-1}\mathbf{X}^{T}\boldsymbol{\Lambda
}_{k}^{-1}\mathbf{Y}_{k}\right]<\infty , \label{eqsnc}
\end{equation}
\noindent i.e., if $\widehat{\boldsymbol{\beta }}\in H^{p},$ with
$H^{p}$ being the functional parameter space where
$\boldsymbol{\beta }$
 takes its values. The following proposition provides a sufficient condition that ensures that the statistics given in  (\ref{solutlsep}) is a generalized least-squares
 estimator for functional parameter vector $\boldsymbol{\beta }.$
\begin{proposition}
\label{pr1} If
\begin{equation}\sum_{k=1}^{\infty
}\mathrm{trace}\left(\mathbf{X}^{T}\boldsymbol{\Lambda }_{k}^{-1}
\mathbf{X}\right)^{-1}<\infty,\label{dmsc}
\end{equation}\noindent  then,
 equation (\ref{eqsnc})
is satisfied a.s. Consequently, $\widehat{\boldsymbol{\beta }}$ in
equation (\ref{solutlsep}) defines a generalized least-squares
 estimator for $\boldsymbol{\beta }.$
\end{proposition}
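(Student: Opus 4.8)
The plan is to reduce the bound (\ref{eqsnc}) to a convergent series of second moments. First I would insert the model equation into (\ref{lse1}): with $\boldsymbol{\beta}_{k}=\Phi_{k}^{*}(\boldsymbol{\beta})=(\beta_{k1},\dots,\beta_{kp})^{T}$ and $\boldsymbol{\varepsilon}_{k}=\Phi_{k}^{*}(\boldsymbol{\varepsilon})$, one has $\mathbf{Y}_{k}=\mathbf{X}\boldsymbol{\beta}_{k}+\sigma\boldsymbol{\varepsilon}_{k}$, and since $\mathbf{X}$ has full column rank and each $\boldsymbol{\Lambda}_{k}$ is strictly positive definite (because $R_{\varepsilon_{i}\varepsilon_{i}}$ is strictly positive for $i=1,\dots,n$), the matrix $\mathbf{X}^{T}\boldsymbol{\Lambda}_{k}^{-1}\mathbf{X}$ is invertible and
\[
\widehat{\boldsymbol{\beta}}_{k}=\boldsymbol{\beta}_{k}+\sigma\,(\mathbf{X}^{T}\boldsymbol{\Lambda}_{k}^{-1}\mathbf{X})^{-1}\mathbf{X}^{T}\boldsymbol{\Lambda}_{k}^{-1}\boldsymbol{\varepsilon}_{k},\qquad k\geq 1 .
\]
By {\bfseries Assumption A0} and (\ref{eqprocovop}), $\boldsymbol{\varepsilon}_{k}$ is a zero-mean Gaussian vector in $\mathbb{R}^{n}$ with covariance matrix $\boldsymbol{\Lambda}_{k}$; hence $\widehat{\boldsymbol{\beta}}_{k}-\boldsymbol{\beta}_{k}$ is zero-mean Gaussian in $\mathbb{R}^{p}$ with covariance matrix $\sigma^{2}(\mathbf{X}^{T}\boldsymbol{\Lambda}_{k}^{-1}\mathbf{X})^{-1}\mathbf{X}^{T}\boldsymbol{\Lambda}_{k}^{-1}\boldsymbol{\Lambda}_{k}\boldsymbol{\Lambda}_{k}^{-1}\mathbf{X}(\mathbf{X}^{T}\boldsymbol{\Lambda}_{k}^{-1}\mathbf{X})^{-1}=\sigma^{2}(\mathbf{X}^{T}\boldsymbol{\Lambda}_{k}^{-1}\mathbf{X})^{-1}$, so that $E\|\widehat{\boldsymbol{\beta}}_{k}-\boldsymbol{\beta}_{k}\|_{\mathbb{R}^{p}}^{2}=\sigma^{2}\,\mathrm{trace}(\mathbf{X}^{T}\boldsymbol{\Lambda}_{k}^{-1}\mathbf{X})^{-1}$.

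Next I would split the left-hand side of (\ref{eqsnc}) by the elementary inequality $\|a+b\|^{2}\leq 2\|a\|^{2}+2\|b\|^{2}$ in $\mathbb{R}^{p}$:
\[
\sum_{k=1}^{\infty}\|\widehat{\boldsymbol{\beta}}_{k}\|_{\mathbb{R}^{p}}^{2}\leq 2\sum_{k=1}^{\infty}\|\boldsymbol{\beta}_{k}\|_{\mathbb{R}^{p}}^{2}+2\sum_{k=1}^{\infty}\|\widehat{\boldsymbol{\beta}}_{k}-\boldsymbol{\beta}_{k}\|_{\mathbb{R}^{p}}^{2}.
\]
The first term equals $2\sum_{i=1}^{p}\|\beta_{i}\|_{H}^{2}=2\|\boldsymbol{\beta}\|_{H^{p}}^{2}<\infty$ since $\boldsymbol{\beta}\in H^{p}$, so it contributes a deterministic finite quantity. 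For the second term, all summands are nonnegative, so Tonelli's theorem together with the moment identity above gives
\[
E\Bigl[\sum_{k=1}^{\infty}\|\widehat{\boldsymbol{\beta}}_{k}-\boldsymbol{\beta}_{k}\|_{\mathbb{R}^{p}}^{2}\Bigr]=\sigma^{2}\sum_{k=1}^{\infty}\mathrm{trace}(\mathbf{X}^{T}\boldsymbol{\Lambda}_{k}^{-1}\mathbf{X})^{-1},
\]
which is finite by hypothesis (\ref{dmsc}). A nonnegative random variable with finite expectation is a.s. finite, hence $\sum_{k}\|\widehat{\boldsymbol{\beta}}_{k}-\boldsymbol{\beta}_{k}\|_{\mathbb{R}^{p}}^{2}<\infty$ a.s., and therefore $\sum_{k}\|\widehat{\boldsymbol{\beta}}_{k}\|_{\mathbb{R}^{p}}^{2}<\infty$ a.s., which is precisely (\ref{eqsnc}). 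Recalling (\ref{solutlsep}), this means $\widehat{\boldsymbol{\beta}}\in H^{p}$ almost surely, so $\widehat{\boldsymbol{\beta}}$ is indeed a generalized least-squares estimator of $\boldsymbol{\beta}$.

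I expect the argument to contain no real obstacle, only bookkeeping: one must record that $\mathbf{X}^{T}\boldsymbol{\Lambda}_{k}^{-1}\mathbf{X}$ is invertible for every $k$ (so that $\widehat{\boldsymbol{\beta}}_{k}$ and the relevant covariance are well defined), and one must be careful to interchange $E$ with the infinite sum, which is justified simply by nonnegativity of the terms. The role of hypothesis (\ref{dmsc}) is exactly that it makes the centred part $\widehat{\boldsymbol{\beta}}-\boldsymbol{\Phi}((\boldsymbol{\beta}_{k})_{k\geq 1})$ belong to $L^{2}(\Omega;H^{p})$, while the mean part $\boldsymbol{\Phi}((\boldsymbol{\beta}_{k})_{k\geq 1})=\boldsymbol{\beta}$ lies in $H^{p}$ by assumption; the triangle inequality then transfers a.s. finiteness to $\widehat{\boldsymbol{\beta}}$ itself.
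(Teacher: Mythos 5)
Your proof is correct and follows essentially the same route as the paper: both arguments reduce (\ref{eqsnc}) to showing that $E\bigl[\sum_{k}\|\widehat{\boldsymbol{\beta}}_{k}\|^{2}\bigr]$ is finite, which under (\ref{dmsc}) comes down to $\sum_{k}\mathrm{trace}(\mathbf{X}^{T}\boldsymbol{\Lambda}_{k}^{-1}\mathbf{X})^{-1}+\|\boldsymbol{\beta}\|_{H^{p}}^{2}<\infty$, and then conclude a.s. finiteness from finiteness of the expectation. The only cosmetic difference is that you center $\widehat{\boldsymbol{\beta}}_{k}$ and use $\|a+b\|^{2}\leq 2\|a\|^{2}+2\|b\|^{2}$, whereas the paper applies the quadratic-form identity $E[\mathbf{y}^{T}\mathbf{Q}\mathbf{y}]=\mathrm{trace}(\mathbf{Q}\mathbf{V})+\boldsymbol{\mu}^{T}\mathbf{Q}\boldsymbol{\mu}$ directly to the uncentered $\mathbf{Y}_{k}$ and obtains the exact value rather than a bound.
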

\begin{proof}
Under condition (\ref{dmsc}),  since $\|\boldsymbol{\beta
}\|^{2}_{H^{p}}=\sum_{k=1}^{\infty}
\boldsymbol{\beta}_{k}^{T}\boldsymbol{\beta}_{k}<\infty,$
 we obtain
\begin{eqnarray}
& &
E\left(\sum_{k=1}^{\infty}\sum_{i=1}^{p}\widehat{\beta}_{ki}^{2}\right)=
\sum_{k=1}^{\infty} E\left(\mathbf{Y}_{k}^{T}\boldsymbol{\Lambda
}_{k}^{-1}\mathbf{X}(\mathbf{X}^{T}\boldsymbol{\Lambda }_{k}^{-1}
\mathbf{X})^{-1}(\mathbf{X}^{T}\boldsymbol{\Lambda }_{k}^{-1}
\mathbf{X})^{-1}\mathbf{X}^{T}\boldsymbol{\Lambda
}_{k}^{-1}\mathbf{Y}_{k} \right)\nonumber\\
& &=\sum_{k=1}^{\infty}\mbox{trace}\left(\boldsymbol{\Lambda
}_{k}^{-1}\mathbf{X}(\mathbf{X}^{T}\boldsymbol{\Lambda }_{k}^{-1}
\mathbf{X})^{-1}(\mathbf{X}^{T}\boldsymbol{\Lambda }_{k}^{-1}
\mathbf{X})^{-1}\mathbf{X}^{T}\boldsymbol{\Lambda
}_{k}^{-1}\boldsymbol{\Lambda }_{k}\right)\nonumber\\
& & +\sum_{k=1}^{\infty}\boldsymbol{\beta
}^{T}_{k}\mathbf{X}^{T}\boldsymbol{\Lambda
}_{k}^{-1}\mathbf{X}(\mathbf{X}^{T}\boldsymbol{\Lambda }_{k}^{-1}
\mathbf{X})^{-1}(\mathbf{X}^{T}\boldsymbol{\Lambda }_{k}^{-1}
\mathbf{X})^{-1}\mathbf{X}^{T}\boldsymbol{\Lambda
}_{k}^{-1}\mathbf{X}\boldsymbol{\beta }_{k}\nonumber
\end{eqnarray}
\begin{eqnarray}
& &= \sum_{k=1}^{\infty}\mbox{trace}\left(\boldsymbol{\Lambda
}_{k}^{-1}\mathbf{X}(\mathbf{X}^{T}\boldsymbol{\Lambda }_{k}^{-1}
\mathbf{X})^{-1}(\mathbf{X}^{T}\boldsymbol{\Lambda }_{k}^{-1}
\mathbf{X})^{-1}\mathbf{X}^{T}\right)+\boldsymbol{\beta
}^{T}_{k}\boldsymbol{\beta }_{k}
\nonumber\\
& &=
\sum_{k=1}^{\infty}\mbox{trace}\left(\mathbf{X}^{T}\boldsymbol{\Lambda
}_{k}^{-1}\mathbf{X}(\mathbf{X}^{T}\boldsymbol{\Lambda }_{k}^{-1}
\mathbf{X})^{-1}(\mathbf{X}^{T}\boldsymbol{\Lambda }_{k}^{-1}
\mathbf{X})^{-1}\right)+\| \boldsymbol{\beta }\|^{2}_{H^{p}}
\nonumber\\
& &
=\sum_{k=1}^{\infty}\mbox{trace}(\mathbf{X}^{T}\boldsymbol{\Lambda
}_{k}^{-1} \mathbf{X})^{-1}+\| \boldsymbol{\beta }\|^{2}_{H^{p}}
<\infty,
 \label{proofpr11}
\end{eqnarray}
\noindent where we have used the well-known  formula
\begin{equation}E[\mathbf{y}^{T}\mathbf{Q}\mathbf{y}]=\mbox{trace}(\mathbf{Q}\mathbf{V})+\boldsymbol{\mu
}^{T}\mathbf{Q}\boldsymbol{\mu },\label{eqf} \end{equation}
\noindent for a given symmetric matrix $\mathbf{Q},$ but not
necessarily positive definite, with $\mathbf{y}$ being a random
vector whose mean is $\boldsymbol{\mu },$ and whose
variance-covariance matrix is
$\mathbf{V}=E[\mathbf{y}\mathbf{y}^{T}]-\boldsymbol{\mu
}\boldsymbol{\mu }^{T}$ (see, for example, \cite{Hocking},
\cite{Schott}). Therefore,
$\sum_{k=1}^{\infty}\sum_{i=1}^{p}\widehat{\beta}_{ki}^{2}<\infty$
a.s., and in particular $\widehat{\boldsymbol{\beta }}\in H^{p}$
a.s.
\end{proof}
\begin{remark}
The sufficient condition (\ref{dmsc})  restricts our class of fixed
effect  design matrices to those ones preserving the trace property
(\ref{tracecvo}) of the infinite series of matrices $\{
\Lambda_{k},\ k\geq 1\}.$ In particular, in the case where
$\mathbf{X}$ is the identity matrix $\sum_{k=1}^{\infty}\mathrm{trace}\left(\mathbf{X}^{T}\boldsymbol{\Lambda
}_{k}^{-1}
\mathbf{X}\right)^{-1}=\sum_{k=1}^{\infty}\mathrm{trace}(\boldsymbol{\Lambda
}_{k})<\infty.$

Also, if $\mathbf{X}$ is a unitary matrix such that $\mathbf{X}\mathbf{X}^{T}=\mathbf{I},$ then, condition  (\ref{dmsc}) assumed in Proposition \ref{pr1} is also satisfied, since
$$\sum_{k=1}^{\infty}\mathrm{trace}\left(\mathbf{X}^{T}\boldsymbol{\Lambda }_{k}^{-1}
\mathbf{X}\right)^{-1}=\sum_{k=1}^{\infty}\mathrm{trace}\left(\mathbf{X}\mathbf{X}^{T}\boldsymbol{\Lambda }_{k}^{-1}\right)^{-1}=
\sum_{k=1}^{\infty}\mathrm{trace}\left(\boldsymbol{\Lambda }_{k}\right)<\infty.$$
\end{remark}
\section{Functional analysis of   variance for the
transformed data model}
\label{sec4} Let us first compute  the residual
error sum of squares \textbf{SSE}. From definition of
$\widehat{\boldsymbol{\beta }}$ in equations (\ref{lse1}) and
(\ref{solutlsep}), we have
\begin{eqnarray}
\mathbf{Y}-\mathbf{X}\widehat{\boldsymbol{\beta
}}&=&\boldsymbol{\Phi}\left(\left(\mathbf{Y}_{k}-\mathbf{X}(\mathbf{X}^{T}\boldsymbol{\Lambda
}_{k}^{-1}\mathbf{X})^{-1}\mathbf{X}^{T}\boldsymbol{\Lambda
}_{k}^{-1}\mathbf{Y}_{k})\right)_{k\geq 1}\right)\nonumber\\
&=&\boldsymbol{\Phi}\left(\left( \left(\mathbf{I}_{n\times
n}-\mathbf{X}(\mathbf{X}^{T}\boldsymbol{\Lambda
}_{k}^{-1}\mathbf{X})^{-1}\mathbf{X}^{T}\boldsymbol{\Lambda
}_{k}^{-1}\right)\mathbf{Y}_{k}\right)_{k\geq 1}\right)\nonumber\\
&=&\boldsymbol{\Phi}\left(\left(
\boldsymbol{\mathcal{M}}_{k}\mathbf{Y}_{k}\right)_{k\geq 1}\right)\label{hte}\\
&=& \left(\sum_{k=1}^{\infty
}\left[\sum_{i=1}^{n}\boldsymbol{\mathcal{M}}_{k}(1,i)\mathbf{Y}_{ki}\right]\phi_{k},\dots,
\sum_{k=1}^{\infty
}\left[\sum_{i=1}^{n}\boldsymbol{\mathcal{M}}_{k}(n,i)\mathbf{Y}_{ki}\right]\phi_{k}\right)^{T},
\nonumber
\end{eqnarray}
\noindent where, as before, $\mathbf{I}_{n\times n}$ is the $n\times
n$ identity matrix.

 From Lemma
\ref{lem10} and  (\ref{hte}), the residual error sum of squares
(\textbf{SSE}) is   computed in the  geometry of the RKHS of
$\boldsymbol{\varepsilon}$ as follows:
\begin{eqnarray}
\mbox{\textbf{SSE}}&=&\left\langle
\mathbf{Y}-\widehat{\boldsymbol{Y}},\mathbf{Y}-\widehat{\boldsymbol{Y}}\right\rangle_{R_{\boldsymbol{\varepsilon}\boldsymbol{\varepsilon}}^{-1}}=
\left\langle \mathbf{Y}-\mathbf{X}\widehat{\boldsymbol{\beta
}},\mathbf{Y}-\mathbf{X}\widehat{\boldsymbol{\beta
}}\right\rangle_{R_{\boldsymbol{\varepsilon}\boldsymbol{\varepsilon}}^{-1}}\nonumber\\
&=&
R_{\boldsymbol{\varepsilon}\boldsymbol{\varepsilon}}^{-1}\left(\mathbf{Y}-\mathbf{X}\widehat{\boldsymbol{\beta
}}\right)\left(\mathbf{Y}-\mathbf{X}\widehat{\boldsymbol{\beta
}}\right)\nonumber\\
&=&\sum_{k=1}^{\infty}
[\boldsymbol{\mathcal{M}}_{k}\mathbf{Y}_{k}]^{T}\boldsymbol{\Lambda
}_{k}^{-1}\boldsymbol{\mathcal{M}}_{k}\mathbf{Y}_{k}, \label{SSE2b}
\end{eqnarray}
\noindent where, for each $k\geq 1,$ $\boldsymbol{\mathcal{M}}_{k}$
is given in equation (\ref{hte}).
  Note that $\left\langle
\mathbf{Y}-\mathbf{X}\boldsymbol{\beta },
\mathbf{Y}-\mathbf{X}\boldsymbol{\beta
}\right\rangle_{R_{\boldsymbol{\varepsilon}\boldsymbol{\varepsilon}}^{-1}}\sim
\sum_{k=1}^{\infty}X_{k},\quad X_{k}\sim \chi^{2}(n),$ $k\geq 1.$
In addition,  since, under {\bfseries Assumption A0}, from  Lemma
\ref{lem10}, the total sum of squares ($\mbox{\textbf{SST}}$) is
given by
\begin{equation}
\mbox{\textbf{SST}}=\left\langle \mathbf{Y},
\mathbf{Y}\right\rangle_{R_{\boldsymbol{\varepsilon}\boldsymbol{\varepsilon}}^{-1}}=
R_{\boldsymbol{\varepsilon}\boldsymbol{\varepsilon}}^{-1}\left(
\mathbf{Y}\right)\left( \mathbf{Y}\right)= \sum_{k=1}^{\infty
}\mathbf{Y}_{k}^{T}\boldsymbol{\Lambda}_{k}^{-1}\mathbf{Y}_{k},\label{SST}
\end{equation}
\noindent it follows  that the first two  moments of $\mathbf{SST}$ are not finite, i.e.,
\begin{eqnarray}
E[\mbox{\textbf{SST}}] &=&\sum_{k=1}^{\infty }
\mbox{trace}\left(\boldsymbol{\Lambda}_{k}^{-1}\boldsymbol{\Lambda}_{k}\right)+\boldsymbol{\beta
}^{T}\mathbf{X}^{T}\boldsymbol{\Lambda}_{k}^{-1}\mathbf{X}\boldsymbol{\beta
}=\infty\nonumber\\
\mbox{Var}\left( \mbox{\textbf{SST}}\right) &=&\sum_{k=1}^{\infty
}2\mbox{trace}\left(\boldsymbol{\Lambda}_{k}^{-1}\boldsymbol{\Lambda}_{k}\boldsymbol{\Lambda}_{k}^{-1}\boldsymbol{\Lambda}_{k}\right)+4\boldsymbol{\beta
}^{T}\mathbf{X}^{T}\boldsymbol{\Lambda}_{k}^{-1}\boldsymbol{\Lambda}_{k}\boldsymbol{\Lambda}_{k}^{-1}\mathbf{X}\boldsymbol{\beta
}=\infty.\nonumber
\end{eqnarray}
Hence, we consider a linear transformation of the functional vector
$\mathbf{Y}$ in equation (\ref{eq1}) to ensure that the
corresponding total sum of squares
($\widetilde{\mbox{\textbf{SST}}}$) is almost surely finite. Denote
by $\mathbf{W}:H^{n}\longrightarrow H^{n}$ such a transformation,
thus
\begin{equation}
\widetilde{\mathbf{Y}}=\mathbf{W}\mathbf{Y}(\cdot)=\mathbf{W}\mathbf{X}\boldsymbol{\beta
}(\cdot)+\mathbf{W}\boldsymbol{\varepsilon}(\cdot). \label{tdm}
\end{equation}

\subsection{Conditions on $\mathbf{W}$ for a.s. finiteness of functional variance components}
\label{secW}
 In the  construction of the functional entries of $\mathbf{W},$ we
consider the resolution of the identity in $H$ given by
$\sum_{k=1}^{\infty }\phi_{k}\otimes \phi_{k},$ with $\{ \phi_{k},\
k\geq 1\}$ denoting, as before, the common eigenvector system of the
covariance operators $R_{\varepsilon_{i}\varepsilon_{j}},$
$i,j=1,\dots,n.$ Note that such eigenvectors are assumed to be known
(see Remark \ref{kev}). Specifically, the weight matrix operator
$\mathbf{W}$ will be of the form
\begin{eqnarray}
\mathbf{W}&=&\left[\begin{array}{cccc} \sum_{k=1}^{\infty
}w_{k11}\phi_{k}\otimes \phi_{k} & \dots & \dots &
\sum_{k=1}^{\infty
}w_{k1n}\phi_{k}\otimes \phi_{k}\\
\vdots & \vdots & \vdots & \vdots \\
\vdots & \vdots & \vdots & \vdots \\
\sum_{k=1}^{\infty }w_{kn1}\phi_{k}\otimes \phi_{k} & \dots & \dots
& \sum_{k=1}^{\infty
}w_{knn}\phi_{k}\otimes \phi_{k} \\
\end{array}
\right]. \label{weightop}
\end{eqnarray}
\noindent For the almost surely finiteness of the  total  sum of squares from equation (\ref{tdm}),
$\mathbf{W}$ must satisfy
\begin{eqnarray}
& & \sum_{k=1}^{\infty
}\mbox{trace}\left(\boldsymbol{\Lambda}_{k}\mathbf{W}_{k}^{T}\boldsymbol{\Lambda}_{k}^{-1}\mathbf{W}_{k}\right)<
\infty\nonumber\\
& & \sum_{k=1}^{\infty }\boldsymbol{\beta
}^{T}_{k}\mathbf{X}^{T}\mathbf{W}_{k}^{T}\boldsymbol{\Lambda}_{k}^{-1}\mathbf{W}_{k}\mathbf{X}\boldsymbol{\beta
}_{k}< \infty, \label{eqfiniteness}
\end{eqnarray}
\noindent with
$\Phi_{k}^{*}\mathbf{W}\Phi_{k}=\mathbf{W}_{k}=(w_{kij})_{i,j=1,\dots,n},$
for each $k\geq 1.$ A sufficient condition for (\ref{eqfiniteness})
to hold is that
\begin{equation}
\sum_{k=1}^{\infty
}\mbox{trace}\left(\mathbf{W}_{k}^{T}\boldsymbol{\Lambda}_{k}^{-1}\mathbf{W}_{k}\right)<
\infty,\label{eqsc0} \end{equation} \noindent since from equation
(\ref{tracecvo}), $\sum_{k=1}^{\infty
}\mbox{trace}\left(\boldsymbol{\Lambda }_{k}\right)<\infty.$ In
addition, we restrict out attention to the matrix operators
satisfying (\ref{weightop})--(\ref{eqfiniteness}) and admitting an
inverse matrix operator  $\mathbf{W}^{-1}:H^{n}\longrightarrow H^{n}$
with  $\mathbf{W}\mathbf{W}^{-1}=\mathbf{I}_{H^{n}}.$

One  can easily check  that the  total  sum of squares
$\widetilde{\textbf{SST}}$ for model (\ref{tdm}) is almost surely finite under
(\ref{eqfiniteness}), i.e.,
\begin{eqnarray}
E[\widetilde{\mbox{\textbf{SST}}}] &=&\sum_{k=1}^{\infty }
\mbox{trace}\left(
\mathbf{W}_{k}^{T}\boldsymbol{\Lambda}_{k}^{-1}\mathbf{W}_{k}\boldsymbol{\Lambda}_{k}\right)+\boldsymbol{\beta
}^{T}_{k}\mathbf{X}^{T}\mathbf{W}_{k}^{T}\boldsymbol{\Lambda}_{k}^{-1}\mathbf{W}_{k}\mathbf{X}\boldsymbol{\beta
}_{k} <\infty. \nonumber
\end{eqnarray}
For illustration purposes, we briefly describe a simple way for the
construction of weight  matrix operator $\mathbf{W}$ satisfying  (\ref{eqsc0}).

\subsubsection{Construction of $\mathbf{W}$}
\label{constWpesos} Let us start assuming as given in Remark
\ref{kev} that $\{\phi_{k},\ k\geq 1\}$ and
$\{\boldsymbol{\Lambda}_{k},\ k\geq 1\}$ are known. Then, for each
$k\geq 1,$ we consider the spectral diagonalization of matrix
$\boldsymbol{\Lambda}_{k}$ in terms of its eigenvectors
$\{\psi_{ki},\ i=1,\dots,n\}$ and eigenvalues
$\{\omega_{p}(\boldsymbol{\Lambda}_{k}),\ p=1,\dots,n\}.$ That is,
for each $k\geq 1,$
\begin{equation}\boldsymbol{\Lambda}_{k}=\boldsymbol{\Psi}_{k}\boldsymbol{\Omega}_{k}(\boldsymbol{\Lambda}_{k})\boldsymbol{\Psi}_{k}^{T},
\label{eqseig} \end{equation} \noindent where
$\boldsymbol{\Omega}_{k}(\boldsymbol{\Lambda}_{k})$ is a diagonal
matrix whose non-null elements are
$\omega_{p}(\boldsymbol{\Lambda}_{k}),$ $p=1,\dots,n,$  and where
$\boldsymbol{\Psi}_{k}^{T}\boldsymbol{\Psi}_{k}=\mathbf{I}_{n\times
n},$ with $\boldsymbol{\Psi}_{k}$ having as columns the eigenvectors
of $\boldsymbol{\Lambda}_{k}.$ For each $k\geq 1,$  matrix
$\mathbf{W}_{k}$ can then be constructed in terms of matrix
$\boldsymbol{\Psi}_{k}$ as follows: \begin{equation}\mathbf{W}_{k}=
\boldsymbol{\Psi}_{k}\boldsymbol{\Omega}_{k}(\mathbf{W}_{k})\boldsymbol{\Psi}_{k}^{T},\label{cw}\end{equation}
\noindent where $\boldsymbol{\Omega}_{k}(\mathbf{W}_{k})$ is a
diagonal matrix whose non-null elements
$\omega_{p}(\mathbf{W}_{k}),$ $p=1,\dots,n,$  satisfy  the following
condition: As $k\rightarrow \infty,$
\begin{equation}\omega_{p}(\mathbf{W}_{k})=\mathcal{O}\left(k^{-\frac{\widetilde{\rho
}(p)+\varrho (p)}{2}}\right), \label{c1asSST} \end{equation}
\noindent with $\varrho(p)>1,$ for every
$p=1,\dots,n,$ and
$\omega_{p}(\boldsymbol{\Lambda}_{k})\geq C(k,p), \quad \mbox{with}\quad C(k,p)=\mathcal{O}\left(k^{-\widetilde{\rho }
(p)}\right),\quad  k\rightarrow
\infty,\ p=1,\dots,n.$
 A direct example of matrix $\boldsymbol{\Omega}_{k}(\mathbf{W}_{k})$ is  obtained from condition (\ref{c1asSST}) by considering
$\omega_{p}(\mathbf{W}_{k})=k^{-\frac{\widetilde{\rho}(p)
+\varrho }{2}},
$ with $\varrho >1,$ for $p=1,\dots,n,$ and  for each $k\geq 1.$

We can also simplify the conditions required  in the construction of $\mathbf{W}$ by the choice
of parameter $\widetilde{\rho
}(p)$ independently of $p,$ with $\varrho >1.$ Since
from (\ref{tracecvo}),
$\sum_{k=1}^{\infty
}\sum_{p=1}^{n}\omega_{p}\left(\boldsymbol{\Lambda}_{k}\right)<\infty,$
 let, for each $k\geq 1,$
$\mu_{k}(\boldsymbol{\Lambda}_{k})=\min_{p=1,\dots,n} \omega_{p}\left(\boldsymbol{\Lambda}_{k}\right),$ where $\mu_{k}(\boldsymbol{\Lambda}_{k})\geq
\widetilde{C}(k),\quad \mbox{with}\quad \widetilde{C}(k)=\mathcal{O}(k^{-\rho }),$
\noindent for certain $\rho >1,$ as $k\rightarrow
\infty,$ one can consider \begin{equation}\max_{p=1,\dots,n}
\omega_{p}\left(\mathbf{W}_{k}\right)\leq M k^{-\frac{\rho +\varrho
}{2} },\quad \rho >1,\ \varrho >1,\label{c2asSST} \end{equation}
\noindent for certain $M>0.$ Note that parameter $\rho $ also
characterizes the order of divergence of the sequence
$\widetilde{\mu}_{k}(\boldsymbol{\Lambda}_{k}^{-1})=\max_{p=1,\dots,n}\omega_{p}\left(\boldsymbol{\Lambda}_{k}^{-1}\right),\quad k\geq 1,$
\noindent since, for each $k\geq 1,$  the eigenvalues
$\omega_{p}\left(\boldsymbol{\Lambda}_{k}^{-1}\right),$
$p=1,\dots,n,$ of $\boldsymbol{\Lambda}_{k}^{-1}$ are given by
$\omega_{p}\left(\boldsymbol{\Lambda}_{k}^{-1}\right)=\frac{1}{\omega_{p}(\boldsymbol{\Lambda}_{k})},\quad p=1,\dots,n,\quad k\geq
1.$
\subsection{Almost surely finiteness of the
functional components of variance} \label{sec42}
\begin{proposition}
\label{SST} Under  {\bfseries Assumption A0},  and conditions
(\ref{dmsc}) and (\ref{c1asSST}) (respectively,
 (\ref{c2asSST})), for $\mathbf{W}$ satisfying
$\mathbf{W}_{k}=\mathbf{W}_{k}^{T},$ we  have
$E[\widetilde{\mbox{\textbf{SST}}}]<\infty .$ Consequently,
$\widetilde{\mbox{\textbf{SST}}}$ is almost surely finite.
\end{proposition}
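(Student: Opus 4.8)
The plan is to prove $E[\widetilde{\mbox{\textbf{SST}}}]<\infty$ and then conclude by non-negativity. First I would write $\widetilde{\mathbf{Y}}=\mathbf{W}\mathbf{Y}$ and use Lemma~\ref{lem10} together with $\Phi_{k}^{*}(\mathbf{W}\mathbf{Y})=\mathbf{W}_{k}\mathbf{Y}_{k}$ to get
\[
\widetilde{\mbox{\textbf{SST}}}=\langle\widetilde{\mathbf{Y}},\widetilde{\mathbf{Y}}\rangle_{\mathbf{R}_{\boldsymbol{\varepsilon}\boldsymbol{\varepsilon}}^{-1}}=\sum_{k=1}^{\infty}\mathbf{Y}_{k}^{T}\mathbf{W}_{k}^{T}\boldsymbol{\Lambda}_{k}^{-1}\mathbf{W}_{k}\mathbf{Y}_{k}=\sum_{k=1}^{\infty}\big\|\boldsymbol{\Lambda}_{k}^{-1/2}\mathbf{W}_{k}\mathbf{Y}_{k}\big\|^{2}\geq 0 .
\]
Since $\mathbf{Y}_{k}$ has mean $\mathbf{X}\boldsymbol{\beta}_{k}$ and covariance $\boldsymbol{\Lambda}_{k}$, applying formula~(\ref{eqf}) term by term reproduces the expression for $E[\widetilde{\mbox{\textbf{SST}}}]$ recorded in Subsection~\ref{secW}; it therefore suffices to verify the two conditions in~(\ref{eqfiniteness}), i.e.\ that $\sum_{k}\mathrm{trace}(\mathbf{W}_{k}^{T}\boldsymbol{\Lambda}_{k}^{-1}\mathbf{W}_{k}\boldsymbol{\Lambda}_{k})<\infty$ and $\sum_{k}\boldsymbol{\beta}_{k}^{T}\mathbf{X}^{T}\mathbf{W}_{k}^{T}\boldsymbol{\Lambda}_{k}^{-1}\mathbf{W}_{k}\mathbf{X}\boldsymbol{\beta}_{k}<\infty$.

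For the first series I would use that the hypothesis $\mathbf{W}_{k}=\mathbf{W}_{k}^{T}$, together with the construction of Subsection~\ref{constWpesos}, makes $\mathbf{W}_{k}$ and $\boldsymbol{\Lambda}_{k}$ simultaneously diagonalizable (both via $\boldsymbol{\Psi}_{k}$), hence commuting; then $\mathbf{W}_{k}^{T}\boldsymbol{\Lambda}_{k}^{-1}\mathbf{W}_{k}\boldsymbol{\Lambda}_{k}=\mathbf{W}_{k}^{2}$, so
\[
\sum_{k=1}^{\infty}\mathrm{trace}\big(\mathbf{W}_{k}^{T}\boldsymbol{\Lambda}_{k}^{-1}\mathbf{W}_{k}\boldsymbol{\Lambda}_{k}\big)=\sum_{k=1}^{\infty}\sum_{p=1}^{n}\omega_{p}(\mathbf{W}_{k})^{2}.
\]
Under~(\ref{c1asSST}), $\omega_{p}(\mathbf{W}_{k})^{2}=\mathcal{O}(k^{-(\widetilde{\rho}(p)+\varrho(p))})$ with $\varrho(p)>1$, and $\widetilde{\rho}(p)\geq0$ since the eigenvalues $\omega_{p}(\boldsymbol{\Lambda}_{k})$ vanish as $k\to\infty$ by the trace condition~(\ref{tracecvo}); hence $\widetilde{\rho}(p)+\varrho(p)>1$ and each of the finitely many inner series is a convergent $p$-series. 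Under~(\ref{c2asSST}), $\sum_{p}\omega_{p}(\mathbf{W}_{k})^{2}\leq nM^{2}k^{-(\rho+\varrho)}$ with $\rho+\varrho>2$, which is again summable. Either way the first series is finite.

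For the second series, note that $\mathbf{W}_{k}^{T}\boldsymbol{\Lambda}_{k}^{-1}\mathbf{W}_{k}=\mathbf{W}_{k}^{2}\boldsymbol{\Lambda}_{k}^{-1}$ is symmetric positive semidefinite with eigenvalues $\omega_{p}(\mathbf{W}_{k})^{2}/\omega_{p}(\boldsymbol{\Lambda}_{k})$, whence
\[
\boldsymbol{\beta}_{k}^{T}\mathbf{X}^{T}\mathbf{W}_{k}^{T}\boldsymbol{\Lambda}_{k}^{-1}\mathbf{W}_{k}\mathbf{X}\boldsymbol{\beta}_{k}\leq\Big(\max_{1\leq p\leq n}\frac{\omega_{p}(\mathbf{W}_{k})^{2}}{\omega_{p}(\boldsymbol{\Lambda}_{k})}\Big)\,\|\mathbf{X}\|^{2}\,\|\boldsymbol{\beta}_{k}\|^{2},
\]
with $\|\mathbf{X}\|$ the (finite) spectral norm of the fixed design matrix. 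By~(\ref{c1asSST}) and the lower bound $\omega_{p}(\boldsymbol{\Lambda}_{k})\geq C(k,p)$ of order $k^{-\widetilde{\rho}(p)}$ (resp.\ by~(\ref{c2asSST}) and $\min_{p}\omega_{p}(\boldsymbol{\Lambda}_{k})\geq\widetilde{C}(k)$ of order $k^{-\rho}$), the displayed maximum is $\mathcal{O}(k^{-\varrho(p)})$ (resp.\ $\mathcal{O}(k^{-\varrho})$), hence bounded over $k$ by some $K<\infty$. Summing over $k$ and using $\sum_{k}\|\boldsymbol{\beta}_{k}\|^{2}=\|\boldsymbol{\beta}\|_{H^{p}}^{2}<\infty$ (which holds since $\boldsymbol{\beta}\in H^{p}$, condition~(\ref{dmsc}) being the standing assumption under which the model and the least-squares estimator $\widehat{\boldsymbol{\beta}}$ of Section~\ref{sec3} were set up) gives the bound $K\|\mathbf{X}\|^{2}\|\boldsymbol{\beta}\|_{H^{p}}^{2}<\infty$. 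Adding the two estimates yields $E[\widetilde{\mbox{\textbf{SST}}}]<\infty$, and since $\widetilde{\mbox{\textbf{SST}}}\geq0$ this forces $\widetilde{\mbox{\textbf{SST}}}<\infty$ almost surely. The main obstacle is the rate bookkeeping in the last two paragraphs: one must recognize that $\mathbf{W}_{k}=\mathbf{W}_{k}^{T}$ plus the construction collapses $\mathrm{trace}(\mathbf{W}_{k}^{T}\boldsymbol{\Lambda}_{k}^{-1}\mathbf{W}_{k}\boldsymbol{\Lambda}_{k})$ to $\sum_{p}\omega_{p}(\mathbf{W}_{k})^{2}$ and then match the exponents in~(\ref{c1asSST})/(\ref{c2asSST}) against $p$-series convergence, while for the mean component one extracts a uniformly bounded prefactor so that the residual $\sum_{k}\|\boldsymbol{\beta}_{k}\|^{2}$ is identified with $\|\boldsymbol{\beta}\|_{H^{p}}^{2}$; if one wished to drop commutativity one would instead control the trace term through the Hilbert--Schmidt norm $\|\boldsymbol{\Lambda}_{k}^{-1/2}\mathbf{W}_{k}\boldsymbol{\Lambda}_{k}^{1/2}\|_{HS}^{2}\leq\big(\max_{p}\omega_{p}(\boldsymbol{\Lambda}_{k})/\min_{p}\omega_{p}(\boldsymbol{\Lambda}_{k})\big)\sum_{p}\omega_{p}(\mathbf{W}_{k})^{2}$ and absorb the extra conditioning factor into the slack $\varrho>1$, which still works under~(\ref{c2asSST}) but is lossy under~(\ref{c1asSST}).
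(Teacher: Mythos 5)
Your proof is correct, and it reaches the conclusion by a more direct route than the paper's. The paper (Appendix A) never diagonalizes anything explicitly: it bounds $\mathrm{trace}\left(\mathbf{W}_{k}^{T}\boldsymbol{\Lambda}_{k}^{-1}\mathbf{W}_{k}\boldsymbol{\Lambda}_{k}\right)$ by the product of traces $\mathrm{trace}\left(\mathbf{W}_{k}^{T}\boldsymbol{\Lambda}_{k}^{-1}\mathbf{W}_{k}\right)\mathrm{trace}\left(\boldsymbol{\Lambda}_{k}\right)$, controls the mean term by $[\mathrm{trace}(\mathbf{W}_{k}^{T}\boldsymbol{\Lambda}_{k}^{-1}\mathbf{W}_{k})]^{2}\,\boldsymbol{\beta}_{k}^{T}\mathbf{X}^{T}\mathbf{X}\boldsymbol{\beta}_{k}$, and then applies Cauchy--Schwarz over $k$ to both series, finally invoking the summability of $\mathrm{trace}(\mathbf{W}_{k}^{T}\boldsymbol{\Lambda}_{k}^{-1}\mathbf{W}_{k})$ and of $\boldsymbol{\beta}_{k}^{T}\mathbf{X}^{T}\mathbf{X}\boldsymbol{\beta}_{k}$. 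You instead exploit the fact that the construction (\ref{cw}) makes $\mathbf{W}_{k}$ and $\boldsymbol{\Lambda}_{k}$ commute, which collapses the trace term exactly to $\sum_{p}\omega_{p}(\mathbf{W}_{k})^{2}$ and the mean term to a quadratic form with a uniformly bounded largest eigenvalue $\max_{p}\omega_{p}(\mathbf{W}_{k})^{2}/\omega_{p}(\boldsymbol{\Lambda}_{k})=\mathcal{O}(k^{-\min_{p}\varrho(p)})$, after which the exponent bookkeeping in (\ref{c1asSST})/(\ref{c2asSST}) finishes the job. Your version is more transparent and gives sharper constants; the price is that it leans explicitly on the simultaneous diagonalization by $\boldsymbol{\Psi}_{k}$, whereas the paper's trace inequalities are formally stated for arbitrary symmetric positive semi-definite factors --- although, since the decay hypotheses (\ref{c1asSST})/(\ref{c2asSST}) are themselves phrased in terms of the eigenvalues $\omega_{p}(\mathbf{W}_{k})$ from the construction of Subsection \ref{constWpesos}, both arguments ultimately rest on the same structural assumption (a point the paper also uses openly in the proof of Theorem \ref{th1}, where it writes $\xi_{i}(\mathbf{W}_{k}^{T}\boldsymbol{\Lambda}_{k}^{-1}\mathbf{W}_{k}\boldsymbol{\Lambda}_{k})=\xi_{i}(\mathbf{W}_{k}^{T}\boldsymbol{\Lambda}_{k}^{-1}\mathbf{W}_{k})\xi_{i}(\boldsymbol{\Lambda}_{k})$). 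Your closing remark about the lossy Hilbert--Schmidt fallback when commutativity is dropped is an honest and accurate assessment of that trade-off.
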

The proof of this result is given in Appendix A.

The sum of squares due to regression
($\widetilde{\mbox{\textbf{SSR}}}$)  for the transformed data model
is given  by
\begin{eqnarray}\widetilde{\mbox{\textbf{SSR}}}&=&\widetilde{\mbox{\textbf{SST}}}-\widetilde{\mbox{\textbf{SSE}}}=
R_{\boldsymbol{\varepsilon}\boldsymbol{\varepsilon}}^{-1}(\widetilde{\mathbf{Y}})(\widetilde{\mathbf{Y}})
-
R_{\boldsymbol{\varepsilon}\boldsymbol{\varepsilon}}^{-1}\left(\widetilde{\mathbf{Y}}-\mathbf{W}\mathbf{X}\widehat{\boldsymbol{\beta
}}\right)\left(\widetilde{\mathbf{Y}}-\mathbf{W}\mathbf{X}\widehat{\boldsymbol{\beta
}}\right)\nonumber\\
&=&\sum_{k=1}^{\infty
}\mathbf{Y}_{k}^{T}\mathbf{W}_{k}^{T}\boldsymbol{\Lambda}_{k}^{-1}\mathbf{W}_{k}\mathbf{Y}_{k}-\mathbf{Y}_{k}^{T}\mathbf{W}_{k}^{T}\boldsymbol{\mathcal{M}}_{k}^{T}\boldsymbol{\Lambda
}_{k}^{-1}\boldsymbol{\mathcal{M}}_{k}\mathbf{W}_{k}\mathbf{Y}_{k}\nonumber\\
&=&\sum_{k=1}^{\infty }
\mathbf{Y}_{k}^{T}[\mathbf{W}_{k}^{T}\boldsymbol{\Lambda}_{k}^{-1}\mathbf{W}_{k}-\mathbf{W}_{k}^{T}\boldsymbol{\mathcal{M}}_{k}^{T}\boldsymbol{\Lambda}_{k}^{-1}
\boldsymbol{\mathcal{M}}_{k}\mathbf{W}_{k}]\mathbf{Y}_{k}.
 \label{ssr22}
\end{eqnarray}
For the almost surely finiteness of the expected sum of squares due
to regression  it is sufficient to consider
\begin{equation}\omega_{p}(\mathbf{W}_{k})=\mathcal{O}\left(k^{-(\widetilde{\rho
}(p)+\varrho (p))}\right), \quad k\rightarrow \infty,
\label{c1asSST2}
\end{equation}
\noindent in equation (\ref{c1asSST}) (respectively to consider
$\max_{p=1,\dots,n} \omega_{p}\left(\mathbf{W}_{k}\right)\leq M
k^{-\rho +\varrho  },$  for $\rho >1$  and $\varrho
>1,$ in (\ref{c2asSST})). In particular an example of matrix operator
$\boldsymbol{W}$ can be constructed from the identity
$\omega_{p}(\mathbf{W}_{k})= k^{-\widetilde{\rho }(p)+\varrho (p)},$
for $p=1,\dots,n,$ and $k\geq 1,$ where $\widetilde{\rho }(p)$ and
$\varrho(p)$ are given as in Section  \ref{constWpesos}. This
construction of $\mathbf{W}$ ensures that
$\sum_{k=1}^{\infty
}\mathrm{trace}\left(\boldsymbol{\Lambda}_{k}^{-1}\mathbf{W}_{k}\right)<\infty,$
\noindent  leading,  under suitable additional  conditions, to
$E[\widetilde{\mbox{\textbf{SSR}}}]<\infty ,$ as given in the
following proposition.
\begin{proposition}
\label{SSR} Under  {\bfseries Assumption A0},  and conditions
(\ref{dmsc}) and  (\ref{c1asSST2}), for $\mathbf{W}$ satisfying
$\mathbf{W}_{k}=\mathbf{W}_{k}^{T},$ we  have
$E[\widetilde{\mbox{\textbf{SSR}}}]<\infty .$ Consequently,
$\widetilde{\mbox{\textbf{SSR}}}$ is almost surely finite.
\end{proposition}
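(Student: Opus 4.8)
The plan is to show $E[\widetilde{\mbox{\textbf{SSR}}}]<\infty$ by bounding, term by term in $k$, the expectation of the quadratic form appearing in equation (\ref{ssr22}), and then invoke the standard fact that a nonnegative random series with summable expectation is almost surely finite. Write $\widetilde{\mbox{\textbf{SSR}}}=\sum_{k=1}^{\infty}\mathbf{Y}_{k}^{T}\mathbf{B}_{k}\mathbf{Y}_{k}$ with $\mathbf{B}_{k}=\mathbf{W}_{k}^{T}\boldsymbol{\Lambda}_{k}^{-1}\mathbf{W}_{k}-\mathbf{W}_{k}^{T}\boldsymbol{\mathcal{M}}_{k}^{T}\boldsymbol{\Lambda}_{k}^{-1}\boldsymbol{\mathcal{M}}_{k}\mathbf{W}_{k}$. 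Since $\mathbf{Y}_{k}$ is Gaussian with mean $[\mathbf{X}\boldsymbol{\beta}]_{k}=\mathbf{X}\boldsymbol{\beta}_{k}$ and covariance $\boldsymbol{\Lambda}_{k}$, the formula (\ref{eqf}) gives
\begin{equation}
E[\mathbf{Y}_{k}^{T}\mathbf{B}_{k}\mathbf{Y}_{k}]=\mathrm{trace}(\mathbf{B}_{k}\boldsymbol{\Lambda}_{k})+\boldsymbol{\beta}_{k}^{T}\mathbf{X}^{T}\mathbf{B}_{k}\mathbf{X}\boldsymbol{\beta}_{k}.\nonumber
\end{equation}
So the proof reduces to showing $\sum_{k}\mathrm{trace}(\mathbf{B}_{k}\boldsymbol{\Lambda}_{k})<\infty$ and $\sum_{k}\boldsymbol{\beta}_{k}^{T}\mathbf{X}^{T}\mathbf{B}_{k}\mathbf{X}\boldsymbol{\beta}_{k}<\infty$.

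**First I would** handle the trace term. Using $\mathbf{W}_{k}=\mathbf{W}_{k}^{T}$ and cyclicity of the trace, $\mathrm{trace}(\mathbf{B}_{k}\boldsymbol{\Lambda}_{k})=\mathrm{trace}(\boldsymbol{\Lambda}_{k}^{-1}\mathbf{W}_{k}\boldsymbol{\Lambda}_{k}\mathbf{W}_{k})-\mathrm{trace}(\boldsymbol{\mathcal{M}}_{k}^{T}\boldsymbol{\Lambda}_{k}^{-1}\boldsymbol{\mathcal{M}}_{k}\mathbf{W}_{k}\boldsymbol{\Lambda}_{k}\mathbf{W}_{k})$. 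Since $\boldsymbol{\mathcal{M}}_{k}$ is the idempotent projector $\mathbf{I}_{n\times n}-\mathbf{X}(\mathbf{X}^{T}\boldsymbol{\Lambda}_{k}^{-1}\mathbf{X})^{-1}\mathbf{X}^{T}\boldsymbol{\Lambda}_{k}^{-1}$, all the matrices involved have operator norm controlled by $\boldsymbol{\Lambda}_{k}$, $\boldsymbol{\Lambda}_{k}^{-1}$ and $\mathbf{W}_{k}$; using the spectral construction (\ref{eqseig})--(\ref{cw}) where $\mathbf{W}_{k}$ and $\boldsymbol{\Lambda}_{k}$ share eigenvectors $\boldsymbol{\Psi}_{k}$, both trace terms are bounded in absolute value by a constant multiple of $\sum_{p=1}^{n}\omega_{p}(\mathbf{W}_{k})^{2}\,\omega_{p}(\boldsymbol{\Lambda}_{k})^{-1}\,\omega_{p}(\boldsymbol{\Lambda}_{k})=\sum_{p=1}^{n}\omega_{p}(\mathbf{W}_{k})^{2}$, together with a contribution from the $\boldsymbol{\mathcal{M}}_{k}$ correction that brings in a factor $\mathrm{trace}(\mathbf{X}^{T}\boldsymbol{\Lambda}_{k}^{-1}\mathbf{X})^{-1}$. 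Under (\ref{c1asSST2}), $\omega_{p}(\mathbf{W}_{k})=\mathcal{O}(k^{-(\widetilde{\rho}(p)+\varrho(p))})$ so that $\sum_{k}\mathrm{trace}(\boldsymbol{\Lambda}_{k}^{-1}\mathbf{W}_{k})<\infty$ as noted before the statement, and combining this with (\ref{dmsc}) yields summability of the trace term. (In the simplified regime, $\max_{p}\omega_{p}(\mathbf{W}_{k})\le Mk^{-\rho+\varrho}$ plays the analogous role.)

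**Next I would** treat the bias term $\sum_{k}\boldsymbol{\beta}_{k}^{T}\mathbf{X}^{T}\mathbf{B}_{k}\mathbf{X}\boldsymbol{\beta}_{k}$. Here the key observation is $\mathbf{B}_{k}\preceq\mathbf{W}_{k}^{T}\boldsymbol{\Lambda}_{k}^{-1}\mathbf{W}_{k}$ in the positive-semidefinite order (the subtracted matrix is PSD), and $\mathbf{B}_{k}$ is itself PSD, so
\begin{equation}
0\le\boldsymbol{\beta}_{k}^{T}\mathbf{X}^{T}\mathbf{B}_{k}\mathbf{X}\boldsymbol{\beta}_{k}\le\boldsymbol{\beta}_{k}^{T}\mathbf{X}^{T}\mathbf{W}_{k}^{T}\boldsymbol{\Lambda}_{k}^{-1}\mathbf{W}_{k}\mathbf{X}\boldsymbol{\beta}_{k}.\nonumber
\end{equation}
Bounding the right side by $\|\mathbf{W}_{k}^{T}\boldsymbol{\Lambda}_{k}^{-1}\mathbf{W}_{k}\|\,\|\mathbf{X}\|^{2}\,\|\boldsymbol{\beta}_{k}\|^{2}$, and noting that in the shared-eigenbasis construction $\|\mathbf{W}_{k}^{T}\boldsymbol{\Lambda}_{k}^{-1}\mathbf{W}_{k}\|=\max_{p}\omega_{p}(\mathbf{W}_{k})^{2}/\omega_{p}(\boldsymbol{\Lambda}_{k})=\mathcal{O}(k^{-\widetilde{\rho}(p)-2\varrho(p)}/k^{-\widetilde{\rho}(p)})=\mathcal{O}(k^{-2\varrho(p)})$, which is bounded as $k\to\infty$ since $\varrho(p)>1$, this term is dominated by a constant times $\|\boldsymbol{\beta}_{k}\|^{2}$; summing over $k$ gives a finite multiple of $\|\boldsymbol{\beta}\|_{H^{p}}^{2}<\infty$. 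Finally, $\widetilde{\mbox{\textbf{SSR}}}\ge 0$ (it is a quadratic form in the RKHS norm, being a difference $\widetilde{\mbox{\textbf{SST}}}-\widetilde{\mbox{\textbf{SSE}}}$ of squared RKHS norms with $\mathbf{X}\widehat{\boldsymbol{\beta}}$ the RKHS projection of $\widetilde{\mathbf{Y}}$ onto the range), so $E[\widetilde{\mbox{\textbf{SSR}}}]<\infty$ forces $\widetilde{\mbox{\textbf{SSR}}}<\infty$ almost surely.

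**The main obstacle** I anticipate is the bookkeeping around the projector $\boldsymbol{\mathcal{M}}_{k}$: it is self-adjoint only with respect to the $\boldsymbol{\Lambda}_{k}^{-1}$-weighted inner product, not the Euclidean one, so one must be careful that $\boldsymbol{\mathcal{M}}_{k}^{T}\boldsymbol{\Lambda}_{k}^{-1}\boldsymbol{\mathcal{M}}_{k}\preceq\boldsymbol{\Lambda}_{k}^{-1}$ genuinely holds (it does, because $\boldsymbol{\Lambda}_{k}^{1/2}\boldsymbol{\mathcal{M}}_{k}\boldsymbol{\Lambda}_{k}^{-1/2}$ is an orthogonal projection) — this is what guarantees $\mathbf{B}_{k}\succeq 0$ and hence the nonnegativity used at the end. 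The rest is a routine combination of (\ref{dmsc}), the decay rates (\ref{c1asSST2}), the trace bound (\ref{tracecvo}), and $\|\boldsymbol{\beta}\|_{H^{p}}<\infty$.
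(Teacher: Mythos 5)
Your proof is correct, and it rests on the same two pillars as the paper's Appendix B — the Gaussian quadratic-form identity (\ref{eqf}) applied termwise in $k$, followed by summability of the trace part and the bias part under (\ref{dmsc}), (\ref{c1asSST2}) and $\|\boldsymbol{\beta}\|_{H^{p}}<\infty$ — but you organize the bounds differently. The paper writes $E[\widetilde{\mbox{\textbf{SSR}}}]=E[\widetilde{\mbox{\textbf{SST}}}]-\mathcal{T}(\mathbf{W}\boldsymbol{\mathcal{M}}^{T}R^{-1}_{\boldsymbol{\varepsilon}\boldsymbol{\varepsilon}}\boldsymbol{\mathcal{M}}\mathbf{W}R_{\boldsymbol{\varepsilon}\boldsymbol{\varepsilon}})-\|\mathbf{X}\boldsymbol{\beta}\|^{2}_{\mathbf{W}\boldsymbol{\mathcal{M}}^{T}R^{-1}_{\boldsymbol{\varepsilon}\boldsymbol{\varepsilon}}\boldsymbol{\mathcal{M}}\mathbf{W}}$, leans on Proposition \ref{SST} for the first term, and controls the remaining traces via the product-of-PSD-matrices inequalities of Fang et al.\ plus Cauchy--Schwarz; you instead work directly with $\mathbf{B}_{k}$ and exploit the Loewner domination $0\preceq\mathbf{B}_{k}\preceq\mathbf{W}_{k}^{T}\boldsymbol{\Lambda}_{k}^{-1}\mathbf{W}_{k}$, which follows from the fact that $\boldsymbol{\mathcal{M}}_{k}$ becomes an orthogonal projection after conjugation. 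Your route buys two things the paper leaves implicit: it reduces both the trace and the bias bounds to the quantities already controlled in the $\widetilde{\mbox{\textbf{SST}}}$ analysis in one stroke, and it makes explicit the nonnegativity $\widetilde{\mbox{\textbf{SSR}}}\geq 0$ that is actually needed to pass from $E[\widetilde{\mbox{\textbf{SSR}}}]<\infty$ to almost sure finiteness. Two minor slips that do not affect the argument: the orthogonal projection is $\boldsymbol{\Lambda}_{k}^{-1/2}\boldsymbol{\mathcal{M}}_{k}\boldsymbol{\Lambda}_{k}^{1/2}=\mathbf{I}-\boldsymbol{\Lambda}_{k}^{-1/2}\mathbf{X}(\mathbf{X}^{T}\boldsymbol{\Lambda}_{k}^{-1}\mathbf{X})^{-1}\mathbf{X}^{T}\boldsymbol{\Lambda}_{k}^{-1/2}$ (your conjugation is written the other way round, which is not symmetric), and in the operator-norm estimate the numerator should be $\omega_{p}(\mathbf{W}_{k})^{2}=\mathcal{O}(k^{-2\widetilde{\rho}(p)-2\varrho(p)})$, giving $\mathcal{O}(k^{-\widetilde{\rho}(p)-2\varrho(p)})$ after dividing by $\omega_{p}(\boldsymbol{\Lambda}_{k})$; the conclusion of boundedness, and hence summability against $\|\boldsymbol{\beta}_{k}\|^{2}$, stands.
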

\noindent  The proof of this result is given in the Appendix B.

Finally, the almost surely finiteness of
$\widetilde{\mbox{\textbf{SSE}}}$ follows from Propositions
\ref{SST} and \ref{SSR}, as proven in Appendix C.

\section{Infinite-dimensional distribution of the functional
components of variance} \label{sec5} This section provides the
moment generating and characteristic functio-\linebreak nals of the
statistics $\widetilde{\mbox{\textbf{SST}}},$
$\widetilde{\mbox{\textbf{SSR}}}$ and
$\widetilde{\mbox{\textbf{SSE}}}.$

\subsection{Moment generating functions of the  variance components}
The following result establishes sufficient conditions for the
existence of the moment generating functionals of the statistics
$\widetilde{\mbox{\textbf{SST}}},$ $\widetilde{\mbox{\textbf{SSR}}}$
and $\widetilde{\mbox{\textbf{SSE}}},$ in the transformed functional
data model.
\begin{theorem}
\label{th1} Let us consider that {\bfseries Assumption A0}, and
equation (\ref{dmsc}) are satisfied. Assume also that $\mathbf{W},$
constructed in (\ref{cw}), is strictly positive definite, and that
equation  (\ref{c1asSST2}) to hold. Furthermore,   for each $k\geq
1,$ the elements of the  eigenvalues systems
\begin{eqnarray}& & \{\xi_{i}\left(
\mathbf{W}_{k}^{T}\boldsymbol{\Lambda}_{k}^{-1}\mathbf{W}_{k}
\boldsymbol{\Lambda}_{k}\right),\ i=1,\dots,n\},\nonumber\\
& &\{\xi_{i}\left(\mathbf{W}_{k}^{T}\boldsymbol{\Lambda
}_{k}^{-1}\mathbf{X}(\mathbf{X}^{T}\boldsymbol{\Lambda
}_{k}^{-1}\mathbf{X})^{-1}\mathbf{X}^{T}\boldsymbol{\Lambda
}_{k}^{-1}\mathbf{W}_{k}\boldsymbol{\Lambda }_{k}\right),\
i=1,\dots,n\},\nonumber\\
& &\{\xi_{i}\left(\left(\mathbf{W}_{k}^{T}\boldsymbol{\Lambda
}_{k}^{-1}\mathbf{W}_{k}-\mathbf{W}_{k}^{T}\boldsymbol{\Lambda
}_{k}^{-1}\mathbf{X}(\mathbf{X}^{T}\boldsymbol{\Lambda
}_{k}^{-1}\mathbf{X})^{-1}\mathbf{X}^{T}\boldsymbol{\Lambda
}_{k}^{-1}\mathbf{W}_{k}\right)\boldsymbol{\Lambda }_{k}\right),\
i=1,\dots,n\}\nonumber \end{eqnarray} \noindent of matrices
$\mathbf{W}_{k}^{T}\boldsymbol{\Lambda}_{k}^{-1}\mathbf{W}_{k}
\boldsymbol{\Lambda}_{k},$ $\mathbf{W}_{k}^{T}\boldsymbol{\Lambda
}_{k}^{-1}\mathbf{X}(\mathbf{X}^{T}\boldsymbol{\Lambda
}_{k}^{-1}\mathbf{X})^{-1}\mathbf{X}^{T}\boldsymbol{\Lambda
}_{k}^{-1}\mathbf{W}_{k}\boldsymbol{\Lambda }_{k},$ and \linebreak
$\left(\mathbf{W}_{k}^{T}\boldsymbol{\Lambda
}_{k}^{-1}\mathbf{W}_{k}-\mathbf{W}_{k}^{T}\boldsymbol{\Lambda
}_{k}^{-1}\mathbf{X}(\mathbf{X}^{T}\boldsymbol{\Lambda
}_{k}^{-1}\mathbf{X})^{-1}\mathbf{X}^{T}\boldsymbol{\Lambda
}_{k}^{-1}\mathbf{W}_{k}\right)\boldsymbol{\Lambda }_{k},$
respectively,
 are considered to be strictly less than one. Then,  the moment
generating functions of $\widetilde{\mbox{\textbf{SST}}},$
$\widetilde{\mbox{\textbf{SSR}}}$ and
$\widetilde{\mbox{\textbf{SSE}}}$ are given by
\begin{eqnarray}
&& M_{\widetilde{\mbox{\textbf{SST}}}}(t/2)=E[\exp(t/2(\widetilde{\mbox{\textbf{SST}}}))]\nonumber\\
&&=\prod_{k=1}^{\infty}\left[\mbox{det}\left(\mathbf{I}_{n\times
n}-t\mathbf{W}_{k}^{T}\boldsymbol{\Lambda}_{k}^{-1}\mathbf{W}_{k}\boldsymbol{\Lambda}_{k}\right)\right]^{-1/2}\nonumber\\
&& \times \exp\left(-\frac{1}{2}\boldsymbol{\beta
}^{T}_{k}\mathbf{X}^{T}\left(\mathbf{I}_{n\times
n}-(\mathbf{I}_{n\times
n}-t\mathbf{W}_{k}^{T}\boldsymbol{\Lambda}_{k}^{-1}\mathbf{W}_{k}\boldsymbol{\Lambda}_{k})^{-1}\right)
\boldsymbol{\Lambda}_{k}^{-1}\mathbf{X}\boldsymbol{\beta
}_{k}\right)\nonumber\\ \label{mgfSSTbb}
\end{eqnarray}
\begin{eqnarray}
&& M_{\widetilde{\mbox{\textbf{SSR}}}}(t/2)=E[\exp(t/2(\widetilde{\mbox{\textbf{SSR}}}))]\nonumber\\
&&=\prod_{k=1}^{\infty}\left[\mbox{det}\left(\mathbf{I}_{n\times
n}-t\mathbf{W}_{k}^{T}\boldsymbol{\Lambda
}_{k}^{-1}\mathbf{X}(\mathbf{X}^{T}\boldsymbol{\Lambda
}_{k}^{-1}\mathbf{X})^{-1}\mathbf{X}^{T}\boldsymbol{\Lambda
}_{k}^{-1}\mathbf{W}_{k}\boldsymbol{\Lambda }_{k}\right)\right]^{-1/2}\nonumber\\
&&\times \exp\left(-\frac{1}{2}\boldsymbol{\beta
}^{T}_{k}\mathbf{X}^{T}\left(\mathbf{I}_{n\times
n}-(\mathbf{I}_{n\times n}\right.\right.\nonumber\\
&& \hspace*{2cm}\left.\left.-t\mathbf{W}_{k}^{T}\boldsymbol{\Lambda
}_{k}^{-1}\mathbf{X}(\mathbf{X}^{T}\boldsymbol{\Lambda
}_{k}^{-1}\mathbf{X})^{-1}\mathbf{X}^{T}\boldsymbol{\Lambda
}_{k}^{-1}\mathbf{W}_{k}\boldsymbol{\Lambda }_{k})^{-1}\right)
\boldsymbol{\Lambda}_{k}^{-1}\mathbf{X}\boldsymbol{\beta
}_{k}\right)\nonumber\\
\label{mgfSSRbb}
\end{eqnarray}
\begin{eqnarray}
&& M_{\widetilde{\mbox{\textbf{SSE}}}}(t/2)=E[\exp(t/2(\widetilde{\mbox{\textbf{SSE}}}))]\nonumber\\
&&=\prod_{k=1}^{\infty}\left[\mbox{det}\left(\mathbf{I}_{n\times
n}-t\left(\mathbf{W}_{k}^{T}\boldsymbol{\Lambda
}_{k}^{-1}\mathbf{W}_{k}-\mathbf{W}_{k}^{T}\boldsymbol{\Lambda
}_{k}^{-1}\mathbf{X}(\mathbf{X}^{T}\boldsymbol{\Lambda
}_{k}^{-1}\mathbf{X})^{-1}\mathbf{X}^{T}\boldsymbol{\Lambda
}_{k}^{-1}\mathbf{W}_{k}\right)\boldsymbol{\Lambda }_{k}\right)\right]^{-1/2}\nonumber\\
&&\times \exp\left(-\frac{1}{2}\boldsymbol{\beta
}^{T}_{k}\mathbf{X}^{T}\left(\mathbf{I}_{n\times
n}-(\mathbf{I}_{n\times n}\right.\right.\nonumber\\
&& \left.\left.-t\left(\mathbf{W}_{k}^{T}\boldsymbol{\Lambda
}_{k}^{-1}\mathbf{W}_{k}-\mathbf{W}_{k}^{T}\boldsymbol{\Lambda
}_{k}^{-1}\mathbf{X}(\mathbf{X}^{T}\boldsymbol{\Lambda
}_{k}^{-1}\mathbf{X})^{-1}\mathbf{X}^{T}\boldsymbol{\Lambda
}_{k}^{-1}\mathbf{W}_{k}\right)\boldsymbol{\Lambda
}_{k})^{-1}\right)
\boldsymbol{\Lambda}_{k}^{-1}\mathbf{X}\boldsymbol{\beta
}_{k}\right).\nonumber\\
\label{mgfSSEbb}
\end{eqnarray}
\end{theorem}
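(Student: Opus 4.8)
The plan is to diagonalise the three statistics simultaneously through the common eigenvector system $\{\phi_{k}\}$, reducing each to a countable sum of mutually independent finite-dimensional Gaussian quadratic forms, to evaluate the moment generating function of each summand by the classical formula for quadratic forms in Gaussian vectors, and to multiply the resulting factors.

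First I would record the joint law of the spectral coefficients. Under \textbf{Assumption A0}, $\mathbf{Y}_{k}=\Phi_{k}^{*}(\mathbf{Y})$, $k\ge1$, are jointly Gaussian, being images of the $H^{n}$-valued Gaussian $\mathbf{Y}$ under the continuous operators $\Phi_{k}^{*}$, with $E[\mathbf{Y}_{k}]=\mathbf{X}\boldsymbol{\beta}_{k}$, and, by (\ref{eqprocovop}) together with the orthogonality condition (\ref{eqorth}), with $\mathrm{Cov}(\mathbf{Y}_{k})=\boldsymbol{\Lambda}_{k}$ and $\mathrm{Cov}(\mathbf{Y}_{k},\mathbf{Y}_{p})=\mathbf{0}$ for $k\neq p$; hence $\{\mathbf{Y}_{k}\}_{k\ge1}$ are mutually independent with $\mathbf{Y}_{k}\sim N(\mathbf{X}\boldsymbol{\beta}_{k},\boldsymbol{\Lambda}_{k})$. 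Since $\mathbf{W}_{k}=\mathbf{W}_{k}^{T}$ and, by the form (\ref{weightop}) of $\mathbf{W}$, $\Phi_{k}^{*}(\widetilde{\mathbf{Y}})=\mathbf{W}_{k}\mathbf{Y}_{k}$, Lemma \ref{lem10}, equation (\ref{ssr22}), and $\widetilde{\mbox{\textbf{SSE}}}=\widetilde{\mbox{\textbf{SST}}}-\widetilde{\mbox{\textbf{SSR}}}$ give
\[
\widetilde{\mbox{\textbf{SST}}}=\sum_{k\ge1}\mathbf{Y}_{k}^{T}\mathbf{A}_{k}^{(1)}\mathbf{Y}_{k},\qquad
\widetilde{\mbox{\textbf{SSR}}}=\sum_{k\ge1}\mathbf{Y}_{k}^{T}\mathbf{A}_{k}^{(2)}\mathbf{Y}_{k},\qquad
\widetilde{\mbox{\textbf{SSE}}}=\sum_{k\ge1}\mathbf{Y}_{k}^{T}\mathbf{A}_{k}^{(3)}\mathbf{Y}_{k},
\]
with $\mathbf{A}_{k}^{(1)}=\mathbf{W}_{k}^{T}\boldsymbol{\Lambda}_{k}^{-1}\mathbf{W}_{k}$, $\mathbf{A}_{k}^{(2)}=\mathbf{W}_{k}^{T}\boldsymbol{\Lambda}_{k}^{-1}\mathbf{W}_{k}-\mathbf{W}_{k}^{T}\boldsymbol{\mathcal{M}}_{k}^{T}\boldsymbol{\Lambda}_{k}^{-1}\boldsymbol{\mathcal{M}}_{k}\mathbf{W}_{k}$ and $\mathbf{A}_{k}^{(3)}=\mathbf{A}_{k}^{(1)}-\mathbf{A}_{k}^{(2)}=\mathbf{W}_{k}^{T}\boldsymbol{\mathcal{M}}_{k}^{T}\boldsymbol{\Lambda}_{k}^{-1}\boldsymbol{\mathcal{M}}_{k}\mathbf{W}_{k}$. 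The single identity needed at this stage is $\boldsymbol{\Lambda}_{k}^{-1}-\boldsymbol{\mathcal{M}}_{k}^{T}\boldsymbol{\Lambda}_{k}^{-1}\boldsymbol{\mathcal{M}}_{k}=\boldsymbol{\Lambda}_{k}^{-1}\mathbf{X}(\mathbf{X}^{T}\boldsymbol{\Lambda}_{k}^{-1}\mathbf{X})^{-1}\mathbf{X}^{T}\boldsymbol{\Lambda}_{k}^{-1}$, obtained by inserting $\boldsymbol{\mathcal{M}}_{k}=\mathbf{I}_{n\times n}-\mathbf{P}_{k}$ with $\mathbf{P}_{k}=\mathbf{X}(\mathbf{X}^{T}\boldsymbol{\Lambda}_{k}^{-1}\mathbf{X})^{-1}\mathbf{X}^{T}\boldsymbol{\Lambda}_{k}^{-1}$ and using $\mathbf{P}_{k}^{2}=\mathbf{P}_{k}$ and the symmetry of $\boldsymbol{\Lambda}_{k}^{-1}\mathbf{P}_{k}$; it rewrites $\mathbf{A}_{k}^{(2)}$ and $\mathbf{A}_{k}^{(3)}$ as the matrices appearing in (\ref{mgfSSRbb}) and (\ref{mgfSSEbb}). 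Each $\mathbf{A}_{k}^{(j)}$ is symmetric, and, since $\boldsymbol{\Lambda}_{k}\succ0$ and $\mathbf{X}$ has full column rank, positive semidefinite.

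Second, for each fixed $k$ I would apply the classical expression for the moment generating function of a quadratic form in a Gaussian vector (the analogue of (\ref{eqf}); see \cite{Hocking}, \cite{Schott}): for $\mathbf{y}\sim N(\boldsymbol{\mu},\mathbf{V})$ and symmetric $\mathbf{A}$,
\[
E\!\left[\exp\!\left(\tfrac{t}{2}\mathbf{y}^{T}\mathbf{A}\mathbf{y}\right)\right]
=\left[\det\!\left(\mathbf{I}_{n\times n}-t\mathbf{A}\mathbf{V}\right)\right]^{-1/2}
\exp\!\left(-\tfrac{1}{2}\boldsymbol{\mu}^{T}\!\left(\mathbf{I}_{n\times n}-(\mathbf{I}_{n\times n}-t\mathbf{A}\mathbf{V})^{-1}\right)\mathbf{V}^{-1}\boldsymbol{\mu}\right),
\]
valid whenever $\mathbf{I}_{n\times n}-t\mathbf{A}\mathbf{V}\succ0$. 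Since $\mathbf{A}_{k}^{(j)}\boldsymbol{\Lambda}_{k}$ is similar to the symmetric positive semidefinite matrix $\boldsymbol{\Lambda}_{k}^{1/2}\mathbf{A}_{k}^{(j)}\boldsymbol{\Lambda}_{k}^{1/2}$, its eigenvalues $\xi_{i}(\cdot)$ are real and nonnegative, and the hypothesis $\xi_{i}(\cdot)<1$ (for $t$ in the admissible range) ensures $\mathbf{I}_{n\times n}-t\mathbf{A}_{k}^{(j)}\boldsymbol{\Lambda}_{k}\succ0$; applying the formula with $\mathbf{V}=\boldsymbol{\Lambda}_{k}$, $\boldsymbol{\mu}=\mathbf{X}\boldsymbol{\beta}_{k}$ and $\mathbf{A}=\mathbf{A}_{k}^{(j)}$ produces exactly the $k$-th factor in (\ref{mgfSSTbb}), (\ref{mgfSSRbb}), (\ref{mgfSSEbb}), respectively.

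Finally, I would pass to the infinite products. Because $\mathbf{A}_{k}^{(j)}\succeq0$, the partial sums $\sum_{k=1}^{N}\mathbf{Y}_{k}^{T}\mathbf{A}_{k}^{(j)}\mathbf{Y}_{k}$ increase a.s.\ to the corresponding statistic, so, by the monotone convergence theorem and the mutual independence of the $\mathbf{Y}_{k}$, for $t\ge0$ the moment generating function of each statistic equals $\lim_{N}\prod_{k=1}^{N}E[\exp(\tfrac{t}{2}\mathbf{Y}_{k}^{T}\mathbf{A}_{k}^{(j)}\mathbf{Y}_{k})]$, i.e.\ the announced product. The point requiring care is the finiteness of this product, equivalently $\sum_{k}|\log M_{k}(t/2)|<\infty$: from $-\tfrac12\log\det(\mathbf{I}_{n\times n}-t\mathbf{A}_{k}^{(j)}\boldsymbol{\Lambda}_{k})=-\tfrac12\sum_{i}\log(1-t\xi_{i}(\mathbf{A}_{k}^{(j)}\boldsymbol{\Lambda}_{k}))$, and since the traces $\mathrm{trace}(\mathbf{A}_{k}^{(j)}\boldsymbol{\Lambda}_{k})$ are summable (so the eigenvalues vanish as $k\to\infty$), this term is $\mathcal{O}(\mathrm{trace}(\mathbf{A}_{k}^{(j)}\boldsymbol{\Lambda}_{k}))$ and hence summable under the conditions used in Propositions \ref{SST} and \ref{SSR} (and the analogous one for $\widetilde{\mbox{\textbf{SSE}}}$), while the mean-dependent exponent is controlled by the second bound in (\ref{eqfiniteness}) together with (\ref{dmsc}). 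This summability step — which guarantees convergence of the infinite products, and thus finiteness of the moment generating functions, and where the a.s.-finiteness results of Section \ref{sec42} and the decay rate (\ref{c1asSST2}) enter — is the principal obstacle; the per-$k$ computation and the algebraic reduction are otherwise routine.
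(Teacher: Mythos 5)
Your proposal is correct and follows essentially the same route as the paper: diagonalisation through the common eigenvector system into mutually independent finite-dimensional Gaussian quadratic forms, application of the classical moment generating function formula for $\mathbf{y}^{T}\mathbf{A}\mathbf{y}$ with $\mathbf{y}\sim N(\boldsymbol{\mu},\mathbf{V})$ to each $k$-th term, and control of the resulting infinite product via trace-class (summability) arguments under conditions (\ref{dmsc}) and (\ref{c1asSST2}). Your explicit verification that $\mathbf{W}_{k}^{T}\boldsymbol{\Lambda}_{k}^{-1}\mathbf{W}_{k}-\mathbf{W}_{k}^{T}\boldsymbol{\mathcal{M}}_{k}^{T}\boldsymbol{\Lambda}_{k}^{-1}\boldsymbol{\mathcal{M}}_{k}\mathbf{W}_{k}$ reduces to the projector form in (\ref{mgfSSRbb}), and your monotone-convergence justification of the passage to the infinite product, are details the paper leaves implicit, but they do not change the argument.
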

\begin{proof}
 We will apply that for a  $n\times 1$ Gaussian vector
 $\mathbf{y}\sim \mathcal{N}(\boldsymbol{\mu},
\boldsymbol{\Sigma}),$ the moment generating function of
$\mathbf{y}^{T}\mathbf{A}\mathbf{y}$ admits the following
expression (see, for example, \cite{Hocking},
 pp. 600-608):
\begin{eqnarray}
& & E[\exp(t(\mathbf{y}^{T}\mathbf{A}\mathbf{y}))]
=\left[\mbox{det}\left(\mathbf{I}_{n\times
n}-2t\mathbf{A}\boldsymbol{\Sigma}\right)\right]^{-1/2}\nonumber\\
& &\hspace*{1cm}\times
\exp\left(-\frac{1}{2}\boldsymbol{\mu}^{T}(\mathbf{I}_{n\times
n}-(\mathbf{I}_{n\times
n}-2t\mathbf{A}\boldsymbol{\Sigma})^{-1})\boldsymbol{\Sigma}^{-1}\boldsymbol{\mu}\right).
\label{eqmgf1ttt}
\end{eqnarray}
\noindent In addition, under {\bfseries Assumption A0}, the elements of the
sequences
\begin{equation}\mathbf{Y}^{T}_{k}\mathbf{W}_{k}^{T}\boldsymbol{\Lambda}_{k}^{-1}\mathbf{W}_{k}\mathbf{Y}_{k},\quad k\geq 1,\label{es1}
\end{equation}
 \noindent  and  \begin{equation}\mathbf{Y}^{T}_{k}\mathbf{W}_{k}^{T}
 \boldsymbol{\mathcal{M}}_{k}^{T}\boldsymbol{\Lambda}_{k}^{-1}\boldsymbol{\mathcal{M}}_{k}\mathbf{W}_{k}\mathbf{Y}_{k},\quad k\geq 1\label{es2}
\end{equation}
 \noindent  are mutually independent. Then, for each $k\geq 1,$ applying (\ref{eqmgf1ttt}) to $n\times 1$ Gaussian vector $\mathbf{Y}_{k}\sim
 \mathcal{N}([\mathbf{X}\boldsymbol{\beta }]_{k},\boldsymbol{\Lambda}_{k}),$ and to  matrices
\begin{eqnarray}
\mathbf{A}_{\widetilde{\mbox{\textbf{SST}}}}^{k} &=&
 \mathbf{W}_{k}^{T}\boldsymbol{\Lambda}_{k}^{-1}\mathbf{W}_{k}\nonumber\\
 \mathbf{A}_{\widetilde{\mbox{\textbf{SSR}}}}^{k}&=&\mathbf{W}_{k}^{T}\boldsymbol{\Lambda
}_{k}^{-1}\mathbf{X}(\mathbf{X}^{T}\boldsymbol{\Lambda
}_{k}^{-1}\mathbf{X})^{-1}\mathbf{X}^{T}\boldsymbol{\Lambda
}_{k}^{-1}\mathbf{W}_{k}\nonumber\\
\mathbf{A}_{\widetilde{\mbox{\textbf{SSE}}}}^{k}&=&\left(\mathbf{W}_{k}^{T}\boldsymbol{\Lambda
}_{k}^{-1}\mathbf{W}_{k}-\mathbf{W}_{k}^{T}\boldsymbol{\Lambda
}_{k}^{-1}\mathbf{X}(\mathbf{X}^{T}\boldsymbol{\Lambda
}_{k}^{-1}\mathbf{X})^{-1}\mathbf{X}^{T}\boldsymbol{\Lambda
}_{k}^{-1}\mathbf{W}_{k}\right),
 \label{idmgf}
 \end{eqnarray}
 \noindent playing the role of  matrix $\mathbf{A},$ for each element of the infinite series defining $\widetilde{\mbox{\textbf{SST}}},$ $\widetilde{\mbox{\textbf{SSR}}},$
 and $\widetilde{\mbox{\textbf{SSE}}},$ respectively, we obtain equations (\ref{mgfSSTbb}), (\ref{mgfSSRbb}) and (\ref{mgfSSEbb}) from  the independence of the elements of the sequences (\ref{es1})
 and (\ref{es2}).

  In equation  (\ref{mgfSSTbb}), the infinite product
$\prod_{k=1}^{\infty}\left[\mbox{det}\left(\mathbf{I}_{n\times
n}-t\mathbf{W}_{k}^{T}\boldsymbol{\Lambda}_{k}^{-1}\mathbf{W}_{k}\boldsymbol{\Lambda}_{k}\right)\right]^{-1/2}$
is finite since it provides the negative square root of
 the Fredholm determinant of operator
$\mathbf{W}^{*}\boldsymbol{R}_{\boldsymbol{\varepsilon}\boldsymbol{\varepsilon}}^{-1}\mathbf{W}\boldsymbol{R}_{\boldsymbol{\varepsilon}\boldsymbol{\varepsilon}}$
at point $t.$
 From   condition (\ref{c1asSST2}), operator \linebreak
$\mathbf{W}^{*}\boldsymbol{R}_{\boldsymbol{\varepsilon}\boldsymbol{\varepsilon}}^{-1}\mathbf{W}\boldsymbol{R}_{\boldsymbol{\varepsilon}\boldsymbol{\varepsilon}}$
is in the trace class (see also  equation (\ref{c1asSST}) and  Appendix A). Hence, its Fredholm
determinant is finite for
\begin{equation}t<\frac{1}{\mbox{trace}\left(\mathbf{W}^{*}\boldsymbol{R}_{\boldsymbol{\varepsilon}\boldsymbol{\varepsilon}}^{-1}\mathbf{W}\boldsymbol{R}_{\boldsymbol{\varepsilon}\boldsymbol{\varepsilon}}\right)}
\label{eqSSTttt}
\end{equation}
 \noindent (see, for example, \cite{SimonB}, Chapter 5, pp. 47-48, equation
(5.12)).

In a similar way, it can be proved in  equation (\ref{mgfSSRbb})
that the infinite product
$\prod_{k=1}^{\infty}\left[\mbox{det}\left(\mathbf{I}_{n\times
n}-t\mathbf{W}_{k}^{T}\boldsymbol{\Lambda
}_{k}^{-1}\mathbf{X}(\mathbf{X}^{T}\boldsymbol{\Lambda
}_{k}^{-1}\mathbf{X})^{-1}\mathbf{X}^{T}\boldsymbol{\Lambda
}_{k}^{-1}\mathbf{W}_{k}\boldsymbol{\Lambda
}_{k}\right)\right]^{-1/2}$ \noindent is finite, since it provides
the negative square root of the Fredholm determinant of operator
$\mathbf{W}^{*}\boldsymbol{R}_{\boldsymbol{\varepsilon}\boldsymbol{\varepsilon}}^{-1}\mathbf{X}(\mathbf{X}^{T}\boldsymbol{R}_{\boldsymbol{\varepsilon}\boldsymbol{\varepsilon}}^{-1}
\mathbf{X})^{-1}\mathbf{X}^{T}\boldsymbol{R}_{\boldsymbol{\varepsilon}\boldsymbol{\varepsilon}}^{-1}\mathbf{W}\boldsymbol{R}_{\boldsymbol{\varepsilon}\boldsymbol{\varepsilon}},$
at point $t.$
 Note that, again,  from (\ref{c1asSST2}), under
(\ref{dmsc}), operator
$$\mathbf{W}^{*}\boldsymbol{R}_{\boldsymbol{\varepsilon}\boldsymbol{\varepsilon}}^{-1}\mathbf{X}(\mathbf{X}^{T}\boldsymbol{R}_{\boldsymbol{\varepsilon}\boldsymbol{\varepsilon}}^{-1}
\mathbf{X})^{-1}\mathbf{X}^{T}\boldsymbol{R}_{\boldsymbol{\varepsilon}\boldsymbol{\varepsilon}}^{-1}\mathbf{W}\boldsymbol{R}_{\boldsymbol{\varepsilon}\boldsymbol{\varepsilon}}$$
\noindent is in the trace class (see Appendix B), and hence, its
Fredholm determinant is finite for
\begin{equation}t<\frac{1}{\mbox{trace}\left(\mathbf{W}^{*}\boldsymbol{R}_{\boldsymbol{\varepsilon}\boldsymbol{\varepsilon}}^{-1}\mathbf{X}(\mathbf{X}^{T}\boldsymbol{R}_{\boldsymbol{\varepsilon}\boldsymbol{\varepsilon}}^{-1}
\mathbf{X})^{-1}\mathbf{X}^{T}\boldsymbol{R}_{\boldsymbol{\varepsilon}\boldsymbol{\varepsilon}}^{-1}
\mathbf{W}\boldsymbol{R}_{\boldsymbol{\varepsilon}\boldsymbol{\varepsilon}}\right)}\label{fcsst}
\end{equation}
 (see, for example, \cite{SimonB}, Chapter 5, pp. 47-48, equation
(5.12)).

Finally, in  equation (\ref{mgfSSEbb}), the negative square root of
the Fredholm determinant at point $t$ of the trace operator
$$\left(\mathbf{W}^{*}\boldsymbol{R}_{\boldsymbol{\varepsilon}\boldsymbol{\varepsilon}}^{-1}\mathbf{W}-\mathbf{W}^{*}
\boldsymbol{R}_{\boldsymbol{\varepsilon}\boldsymbol{\varepsilon}}^{-1}\mathbf{X}(\mathbf{X}^{T}\boldsymbol{R}_{\boldsymbol{\varepsilon}\boldsymbol{\varepsilon}}^{-1}\mathbf{X})^{-1}\mathbf{X}^{T}
\boldsymbol{R}_{\boldsymbol{\varepsilon}\boldsymbol{\varepsilon}}^{-1}\mathbf{W}\right)\boldsymbol{R}_{\boldsymbol{\varepsilon}\boldsymbol{\varepsilon}},$$
is given by
$$\prod_{k=1}^{\infty}\left[\mbox{det}\left(\mathbf{I}_{n\times
n}-t\left(\mathbf{W}_{k}^{T}\boldsymbol{\Lambda
}_{k}^{-1}\mathbf{W}_{k}-\mathbf{W}_{k}^{T}\boldsymbol{\Lambda
}_{k}^{-1}\mathbf{X}(\mathbf{X}^{T}\boldsymbol{\Lambda
}_{k}^{-1}\mathbf{X})^{-1}\mathbf{X}^{T}\boldsymbol{\Lambda
}_{k}^{-1}\mathbf{W}_{k}\right)\boldsymbol{\Lambda
}_{k}\right)\right]^{-1/2},$$  \noindent which is finite for
\begin{equation}t<\frac{1}{\mbox{trace}\left(\left(\mathbf{W}^{*}\boldsymbol{R}_{\boldsymbol{\varepsilon}\boldsymbol{\varepsilon}}^{-1}\mathbf{W}-\mathbf{W}^{*}
\boldsymbol{R}_{\boldsymbol{\varepsilon}\boldsymbol{\varepsilon}}^{-1}\mathbf{X}(\mathbf{X}^{T}\boldsymbol{R}_{\boldsymbol{\varepsilon}\boldsymbol{\varepsilon}}^{-1}\mathbf{X})^{-1}\mathbf{X}^{T}
\boldsymbol{R}_{\boldsymbol{\varepsilon}\boldsymbol{\varepsilon}}^{-1}\mathbf{W}\right)
\boldsymbol{R}_{\boldsymbol{\varepsilon}\boldsymbol{\varepsilon}}\right)}\label{tSSEtracenorm}
\end{equation}
\noindent (see, for example, \cite{SimonB}, Chapter 5, pp. 47-48,
equation (5.12)).

We now study the  finiteness of the second factor at the right-hand
side of equations (\ref{mgfSSTbb}), (\ref{mgfSSRbb}) and
(\ref{mgfSSEbb}), given in terms  of negative exponential functions.
Specifically,   in equation (\ref{mgfSSTbb}),
consider
\begin{eqnarray}t&<&
K_{\widetilde{\mbox{\textbf{SST}}}}=\frac{1}{\max_{k\geq 1;\
i=1,\dots,n}\xi_{i}\left(
\mathbf{W}_{k}^{T}\boldsymbol{\Lambda}_{k}^{-1}\mathbf{W}_{k}
\boldsymbol{\Lambda}_{k}\right)} \nonumber\\&& \hspace*{1cm}\times
\left[1-\frac{1}{1-\max_{k\geq 1; \ i=1,\dots,n}\xi_{i}\left(
\mathbf{W}_{k}^{T}\boldsymbol{\Lambda}_{k}^{-1}\mathbf{W}_{k}
\boldsymbol{\Lambda}_{k}\right)}\right]. \label{egfcsst}
\end{eqnarray}
\noindent  where, as before, for each $k\geq 1,$
$\xi_{i}(\mathbf{A}_{k})$ denotes the $i$th eigenvalue of $n\times
n$ matrix $\mathbf{A}_{k},$ appearing in the series representation
of a matrix operator $\mathcal{\boldsymbol{A}}$ defined on
$\mathcal{H}=H^{n},$ such that
$\boldsymbol{\Phi}^{*}\mathcal{\boldsymbol{A}}\boldsymbol{\Phi}=\left(\mathbf{A}_{k}\right)_{k\geq
1}.$ Since $$K_{\widetilde{\mbox{\textbf{SST}}}}
\leq \frac{1}{\xi_{i}\left(
\mathbf{W}_{k}^{T}\boldsymbol{\Lambda}_{k}^{-1}\mathbf{W}_{k}
\boldsymbol{\Lambda}_{k}\right)} \left[1-\frac{1}{1-\xi_{i}\left(
\mathbf{W}_{k}^{T}\boldsymbol{\Lambda}_{k}^{-1}\mathbf{W}_{k}
\boldsymbol{\Lambda}_{k}\right)}\right],$$ \noindent for every
$i=1,\dots,n,$ and $k\geq 1,$  for $t<
K_{\widetilde{\mbox{\textbf{SST}}}},$ we obtain
\begin{eqnarray}
&& \exp\left(-\frac{1}{2}\sum_{k=1}^{\infty }\boldsymbol{\beta
}^{T}_{k}\mathbf{X}^{T}\left(\mathbf{I}_{n\times
n}-(\mathbf{I}_{n\times
n}-t\mathbf{W}_{k}^{T}\boldsymbol{\Lambda}_{k}^{-1}\mathbf{W}_{k}\boldsymbol{\Lambda}_{k})^{-1}\right)
\boldsymbol{\Lambda}_{k}^{-1}\mathbf{X}\boldsymbol{\beta
}_{k}\right) \nonumber
\\
& & =\exp\left(-\frac{1}{2}\sum_{k=1}^{\infty
}\sum_{i=1}^{n}[\boldsymbol{\Psi}_{k}^{T}[\mathbf{X}\boldsymbol{\beta
}]_{k}]_{i}^{2}\left[1-\frac{1}{1-t\xi_{i}\left(
\mathbf{W}_{k}^{T}\boldsymbol{\Lambda}_{k}^{-1}\mathbf{W}_{k}
\boldsymbol{\Lambda}_{k}\right)}\right]\xi_{i}\left(\boldsymbol{\Lambda}_{k}^{-1}\right)\right)
\nonumber\\
& &  \leq \exp\left(-\frac{1}{2}\sum_{k=1}^{\infty
}\sum_{i=1}^{n}[\boldsymbol{\Psi}_{k}^{T}[\mathbf{X}\boldsymbol{\beta
}]_{k}]_{i}^{2}\left[1- \right.\right.\nonumber\\
& & \left.\left.  \frac{1}{1- \frac{1}{\xi_{i}\left(
\mathbf{W}_{k}^{T}\boldsymbol{\Lambda}_{k}^{-1}\mathbf{W}_{k}
\boldsymbol{\Lambda}_{k}\right)} \left[1-\frac{1}{1-\xi_{i}\left(
\mathbf{W}_{k}^{T}\boldsymbol{\Lambda}_{k}^{-1}\mathbf{W}_{k}
\boldsymbol{\Lambda}_{k}\right)}\right] \xi_{i}\left(
\mathbf{W}_{k}^{T}\boldsymbol{\Lambda}_{k}^{-1}\mathbf{W}_{k}
\boldsymbol{\Lambda}_{k}\right)}\right]\xi_{i}\left(\boldsymbol{\Lambda}_{k}^{-1}\right)\right)
\nonumber
\end{eqnarray}
\begin{eqnarray}
& & =
 \exp\left(-\frac{1}{2} \sum_{k=1}^{\infty
}\boldsymbol{\beta }^{T}_{k}\mathbf{X}^{T}
\mathbf{W}_{k}^{T}\boldsymbol{\Lambda}_{k}^{-1}\mathbf{W}_{k}\mathbf{X}\boldsymbol{\beta
}_{k}\right), \nonumber\\
\label{eqmgf1}
\end{eqnarray} \noindent where, for each $k\geq 1,$
$\boldsymbol{\Psi}_{k}$ is the projection operator into the
eigenvectors of $\boldsymbol{\Lambda}_{k},$ appearing  in equation
(\ref{eqseig}). Also, for each $k\geq 1,$ and for $i=1,\dots,n,$
$[\boldsymbol{\Psi}_{k}^{T}[\mathbf{X}\boldsymbol{\beta }]_{k}]_{i}$
denotes the $i$th projection  of the $n\times 1$ vector
$[\mathbf{X}\boldsymbol{\beta }]_{k}$ with respect to the $i$th
eigenvector  of $\boldsymbol{\Lambda}_{k}.$ Note that we have
applied that, for each $k\geq 1,$ $\mathbf{W}_{k}$ has been
constructed from the same eigenvector system as
$\boldsymbol{\Lambda}_{k}$ (see equations (\ref{eqseig}) and
(\ref{cw})). In particular, $\xi_{i}\left(
\mathbf{W}_{k}^{T}\boldsymbol{\Lambda}_{k}^{-1}\mathbf{W}_{k}
\boldsymbol{\Lambda}_{k}\right)=\xi_{i}\left(
\mathbf{W}_{k}^{T}\boldsymbol{\Lambda}_{k}^{-1}\mathbf{W}_{k}\right)\xi_{i}\left(\boldsymbol{\Lambda}_{k}\right).$
Finally, equation (\ref{eqmgf1}) is finite from
condition (\ref{c1asSST2}), which implies
$$\sum_{k=1}^{\infty
}\mbox{trace}\left(\mathbf{W}_{k}^{T}\boldsymbol{\Lambda}_{k}^{-1}\mathbf{W}_{k}\right)<\infty.$$

  From equations (\ref{eqSSTttt}) and (\ref{egfcsst}), denoting  \begin{equation}\mathrm{IT}_{\widetilde{\mbox{\textbf{SST}}}}= \frac{1}{\mbox{trace}\left(\mathbf{W}^{*}
  \boldsymbol{R}_{\boldsymbol{\varepsilon}\boldsymbol{\varepsilon}}^{-1}\mathbf{W}
\boldsymbol{R}_{\boldsymbol{\varepsilon}\boldsymbol{\varepsilon}}\right)},\label{minsst}
\end{equation}
we have that
 $M_{\widetilde{\mbox{\textbf{SST}}}}(t)$ is
finite for every
$t<\min\{K_{\widetilde{\mbox{\textbf{SST}}}},\mathrm{IT}_{\widetilde{\mbox{\textbf{SST}}}}\}.$
\noindent An analytic continuation argument (see \cite{Lukacs}, Th.
7.1.1) guarantees that $M_{\widetilde{\mbox{\textbf{SST}}}}(t)$
defines the unique limit moment generating function for all real
values of $t.$

Similar arguments to equation (\ref{mgfSSTbb}) can
be applied for the proof of the finiteness of the second negative
exponential factors in equations (\ref{mgfSSRbb}) and
(\ref{mgfSSEbb}), as well as for the existence of the moment
generating functions given in such equations. The details can be
left to the reader, since they can be obtained  straightforward from the above-described steps
 by replacing, for each $k\geq 1,$
  matrix
$\mathbf{W}_{k}^{T}\boldsymbol{\Lambda}_{k}^{-1}\mathbf{W}_{k}
\boldsymbol{\Lambda}_{k}$ by
 matrix
$\mathbf{W}_{k}^{T}\boldsymbol{\Lambda
}_{k}^{-1}\mathbf{X}(\mathbf{X}^{T}\boldsymbol{\Lambda
}_{k}^{-1}\mathbf{X})^{-1}\mathbf{X}^{T}\boldsymbol{\Lambda
}_{k}^{-1}\mathbf{W}_{k}\boldsymbol{\Lambda }_{k},$ in the case of equation (\ref{mgfSSRbb}), and,  in  equation  (\ref{mgfSSEbb}), replacing
it by matrix
$$\left(\mathbf{W}_{k}^{T}\boldsymbol{\Lambda
}_{k}^{-1}\mathbf{W}_{k}-\mathbf{W}_{k}^{T}\boldsymbol{\Lambda
}_{k}^{-1}\mathbf{X}(\mathbf{X}^{T}\boldsymbol{\Lambda
}_{k}^{-1}\mathbf{X})^{-1}\mathbf{X}^{T}\boldsymbol{\Lambda
}_{k}^{-1}\mathbf{W}_{k}\right)\boldsymbol{\Lambda }_{k}.$$
\end{proof}

\subsection{Characteristic functions of the variance components}

 In the derivation of the
results in this section, we apply Proposition 1.2.8 of Chapter 1,
p.14, in \cite{Da Prato}, where the characteristic function of
quadratic forms defined in terms of
 symmetric  operators, and  Hilbert-valued
Gaussian random variables is provided. This result is formulated in Lemma
\ref{lem1} below, for the special case where the Hilbert space
considered is $\mathcal{H}=H^{n}.$
\begin{lemma}
\label{lem1}
 Let  $\mathbf{Y}$ be an
$\mathcal{H}$-valued zero-mean Gaussian random variable  with trace
covariance matrix operator $\mathbf{R}_{\mathbf{Y}\mathbf{Y}}.$ Let
$\mathbf{M}$ be a  symmetric matrix operator  on $\mathcal{H}.$
Assume that $\|\mathbf{R}_{\mathbf{Y}\mathbf{Y}}^{1/2}
\mathbf{M}\mathbf{R}_{\mathbf{Y}\mathbf{Y}}^{1/2}\|_{\mathcal{L}(\mathcal{H})}<1,$
 where $\|\cdot \|_{\mathcal{L}(\mathcal{H})}$ denotes the
norm in the space of bounded linear operators on $\mathcal{H}.$
Then,   for  $\mathbf{b} \in \mathcal{H},$
\begin{eqnarray}& & E\left[\exp\left( \frac{1}{2}\left\langle
\mathbf{M}\mathbf{Y},\mathbf{Y}\right\rangle_{\mathcal{H}}+
\left\langle
\mathbf{b},\mathbf{Y}\right\rangle_{\mathcal{H}}\right)\right] =\left[\mbox{det}\left(\mathbf{I}-
\mathbf{R}_{\mathbf{Y}\mathbf{Y}}^{1/2}
\mathbf{M}\mathbf{R}_{\mathbf{Y}\mathbf{Y}}^{1/2}\right)\right]^{-1/2}
\nonumber\\
& & \times \exp\left\{\frac{1}{2} \left\|\left(\mathbf{I}-
\mathbf{R}_{\mathbf{Y}\mathbf{Y}}^{1/2}
\mathbf{M}\mathbf{R}_{\mathbf{Y}\mathbf{Y}}^{1/2}\right)^{-1/2}
\mathbf{R}_{\mathbf{Y}\mathbf{Y}}^{1/2}
\mathbf{b}\right\|^{2}_{\mathcal{H}}\right\}. \label{eqchfhrv}
\end{eqnarray}
\end{lemma}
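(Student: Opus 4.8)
The statement is the specialization to the separable Hilbert space $\mathcal{H}=H^{n}$ of Proposition 1.2.8, Chapter~1, p.~14, in \cite{Da Prato}, and the plan is to reproduce that argument in the present notation, reducing the computation by simultaneous diagonalization to a countable product of one-dimensional Gaussian integrals.

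First I would fix a spectral (Karhunen--Lo\`eve) decomposition of the trace covariance matrix operator, $\mathbf{R}_{\mathbf{Y}\mathbf{Y}}=\sum_{j\geq 1}\mu_{j}\,\mathbf{f}_{j}\otimes\mathbf{f}_{j}$, with $\{\mathbf{f}_{j}\}_{j\geq 1}$ a complete orthonormal system of $\mathcal{H}$, $\mu_{j}\geq 0$ and $\sum_{j\geq 1}\mu_{j}=\mathrm{trace}(\mathbf{R}_{\mathbf{Y}\mathbf{Y}})<\infty$; correspondingly $\mathbf{Y}=\sum_{j\geq 1}\sqrt{\mu_{j}}\,\zeta_{j}\mathbf{f}_{j}$ in $L^{2}(\Omega;\mathcal{H})$ with $\{\zeta_{j}\}_{j\geq 1}$ i.i.d.\ standard real Gaussian. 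I would then introduce the symmetric operator $\mathbf{T}:=\mathbf{R}_{\mathbf{Y}\mathbf{Y}}^{1/2}\mathbf{M}\,\mathbf{R}_{\mathbf{Y}\mathbf{Y}}^{1/2}$; by hypothesis $\|\mathbf{T}\|_{\mathcal{L}(\mathcal{H})}<1$, and in the settings in which Lemma \ref{lem1} is used (cf.\ Theorem \ref{th1}) $\mathbf{T}$ is in the trace class, so that $\mathrm{det}(\mathbf{I}-\mathbf{T})$ is a well-defined Fredholm determinant (see \cite{SimonB}, Ch.~5). Diagonalizing $\mathbf{T}=\sum_{k\geq 1}\gamma_{k}\,\mathbf{g}_{k}\otimes\mathbf{g}_{k}$ with $\{\mathbf{g}_{k}\}_{k\geq 1}$ an orthonormal system of $\mathcal{H}$, $\sup_{k}\gamma_{k}<1$ and $\sum_{k\geq 1}|\gamma_{k}|<\infty$, one checks, working on the finite-dimensional truncations, that $\langle\mathbf{M}\mathbf{Y},\mathbf{Y}\rangle_{\mathcal{H}}=\sum_{k\geq 1}\gamma_{k}\rho_{k}^{2}$ and $\langle\mathbf{b},\mathbf{Y}\rangle_{\mathcal{H}}=\sum_{k\geq 1}c_{k}\rho_{k}$, where $\{\rho_{k}\}_{k\geq 1}$ is an orthogonal transformation of $\{\zeta_{j}\}_{j\geq 1}$, hence again i.i.d.\ standard Gaussian, and $c_{k}:=\langle\mathbf{R}_{\mathbf{Y}\mathbf{Y}}^{1/2}\mathbf{b},\mathbf{g}_{k}\rangle_{\mathcal{H}}$, so that $\sum_{k\geq 1}c_{k}^{2}=\|\mathbf{R}_{\mathbf{Y}\mathbf{Y}}^{1/2}\mathbf{b}\|_{\mathcal{H}}^{2}<\infty$ since $\mathbf{R}_{\mathbf{Y}\mathbf{Y}}^{1/2}$ is bounded and $\mathbf{b}\in\mathcal{H}$.

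Next, for the $N$-term partial sums, independence factorizes the expectation as $\prod_{k=1}^{N}E[\exp(\tfrac12\gamma_{k}\rho_{k}^{2}+c_{k}\rho_{k})]$, and each factor is the elementary one-dimensional Gaussian integral, equal to $(1-\gamma_{k})^{-1/2}\exp(\tfrac12 c_{k}^{2}/(1-\gamma_{k}))$ because $\gamma_{k}\leq\|\mathbf{T}\|_{\mathcal{L}(\mathcal{H})}<1$. Passing to the limit, $\prod_{k\geq 1}(1-\gamma_{k})^{-1/2}=[\mathrm{det}(\mathbf{I}-\mathbf{T})]^{-1/2}$, and, since $(\mathbf{I}-\mathbf{T})^{-1/2}$ is diagonal with entries $(1-\gamma_{k})^{-1/2}$ in the basis $\{\mathbf{g}_{k}\}$, $\prod_{k\geq 1}\exp(\tfrac12 c_{k}^{2}/(1-\gamma_{k}))=\exp(\tfrac12\sum_{k\geq 1}c_{k}^{2}/(1-\gamma_{k}))=\exp(\tfrac12\|(\mathbf{I}-\mathbf{T})^{-1/2}\mathbf{R}_{\mathbf{Y}\mathbf{Y}}^{1/2}\mathbf{b}\|_{\mathcal{H}}^{2})$. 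Collecting these factors yields exactly the right-hand side of (\ref{eqchfhrv}).

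The step that requires care, and which I regard as the main obstacle, is the interchange of expectation and limit together with the convergence of the infinite product. Here I would argue as follows: the partial exponents $\tfrac12\sum_{k\leq N}\gamma_{k}\rho_{k}^{2}+\sum_{k\leq N}c_{k}\rho_{k}$ converge almost surely (two/three-series criteria, using $\sum_{k}|\gamma_{k}|<\infty$ and $\sum_{k}c_{k}^{2}<\infty$), so the integrands converge a.s.; choosing $\delta>0$ with $(1+\delta)\sup_{k}\gamma_{k}<1$, the same one-dimensional computation bounds $\sup_{N}E[(\,\cdot\,)^{1+\delta}]$, which gives uniform integrability and hence $L^{1}$-convergence, legitimising $E[\lim]=\lim E$; and $\prod_{k}(1-\gamma_{k})$ converges to a nonzero limit because $\sum_{k}|\gamma_{k}|<\infty$ and $\sup_{k}\gamma_{k}<1$, so the Fredholm determinant is finite and nonzero. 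All of these are met in the situations where Lemma \ref{lem1} is invoked; alternatively, one may simply quote Proposition 1.2.8 of \cite{Da Prato} applied with the separable Hilbert space $\mathcal{H}=H^{n}$.
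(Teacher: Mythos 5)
The paper offers no proof of Lemma \ref{lem1} at all: the lemma is introduced explicitly as a restatement, for the special case $\mathcal{H}=H^{n}$, of Proposition 1.2.8 of \cite{Da Prato}, and that citation is the entire justification. Your proposal therefore does strictly more than the paper does: it reconstructs the proof of the cited proposition by diagonalizing $\mathbf{T}=\mathbf{R}_{\mathbf{Y}\mathbf{Y}}^{1/2}\mathbf{M}\mathbf{R}_{\mathbf{Y}\mathbf{Y}}^{1/2}$, reducing the expectation to a countable product of independent one-dimensional Gaussian integrals, and justifying the passage to the limit by uniform integrability. This is correct and is essentially the standard proof of the Da Prato--Zabczyk result; your explicit treatment of the interchange of limit and expectation is the step most often glossed over, and you handle it properly. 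Two minor points. First, you need not defer the trace-class property of $\mathbf{T}$ to ``the settings in which the lemma is used'': since $\mathbf{R}_{\mathbf{Y}\mathbf{Y}}$ is trace class, $\mathbf{R}_{\mathbf{Y}\mathbf{Y}}^{1/2}$ is Hilbert--Schmidt, so for bounded $\mathbf{M}$ the operator $\mathbf{T}$ is the product of the two Hilbert--Schmidt operators $\mathbf{R}_{\mathbf{Y}\mathbf{Y}}^{1/2}$ and $\mathbf{M}\mathbf{R}_{\mathbf{Y}\mathbf{Y}}^{1/2}$, hence automatically trace class, and the Fredholm determinant is well defined under the lemma's own hypotheses. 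Second, in writing $\langle\mathbf{b},\mathbf{Y}\rangle_{\mathcal{H}}=\sum_{k}c_{k}\rho_{k}$ with $c_{k}=\langle\mathbf{R}_{\mathbf{Y}\mathbf{Y}}^{1/2}\mathbf{b},\mathbf{g}_{k}\rangle_{\mathcal{H}}$ you should extend the eigenvector system of $\mathbf{T}$ to a complete orthonormal system of the closure of the range of $\mathbf{R}_{\mathbf{Y}\mathbf{Y}}^{1/2}$, assigning eigenvalue zero on the orthogonal complement of the eigenspaces; otherwise a component of $\mathbf{R}_{\mathbf{Y}\mathbf{Y}}^{1/2}\mathbf{b}$ could be lost and the identity $\sum_{k}c_{k}^{2}=\|\mathbf{R}_{\mathbf{Y}\mathbf{Y}}^{1/2}\mathbf{b}\|_{\mathcal{H}}^{2}$ would fail. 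Neither point affects the validity of your argument.
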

 The next result establishes sufficient  conditions for the explicit definition of the characteristic functionals of $\widetilde{\mbox{\textbf{SST}}},$ $\widetilde{\mbox{\textbf{SSR}}}$
and $\widetilde{\mbox{\textbf{SSE}}}.$
\begin{theorem}
\label{characteristicfunctions} Under  {\bfseries Assumption A0},
and conditions   (\ref{dmsc}) and (\ref{c1asSST2}),  the following
assertions hold:
\begin{itemize}
\item[] (i) The characteristic functional of
$\widetilde{\mbox{\textbf{SST}}}$ is defined as
\begin{eqnarray}
&& F_{\widetilde{\mbox{\textbf{SST}}}}(i\omega )=
E\left[\exp\left(i\omega\widetilde{\mbox{\textbf{SST}}}\right)\right]
\nonumber\\
& & =\prod_{k=1}^{\infty}\left[\mbox{det}\left(\mathbf{I}_{n\times
n}-2i\omega
\boldsymbol{\Lambda}_{k}^{1/2}\mathbf{W}_{k}^{T}\boldsymbol{\Lambda}_{k}^{-1}\mathbf{W}_{k}\boldsymbol{\Lambda}_{k}^{1/2}\right)\right]^{-1/2}
\nonumber\\
& &\times \exp\left(-4\omega^{2}\sum_{k=1}^{\infty}\boldsymbol{\beta
}_{k}^{T}\mathbf{X}^{T}\mathbf{W}_{k}^{T}\boldsymbol{\Lambda}_{k}^{-1}\mathbf{W}_{k}\boldsymbol{\Lambda}_{k}^{1/2}\left(\mathbf{I}_{n\times
n}\right.\right.\nonumber\\
& & \hspace*{1cm}\left.\left. -2i\omega
\boldsymbol{\Lambda}_{k}^{1/2}\mathbf{W}_{k}^{T}\boldsymbol{\Lambda}_{k}^{-1}\mathbf{W}_{k}\boldsymbol{\Lambda}_{k}^{1/2}\right)^{-1}
\boldsymbol{\Lambda}_{k}^{1/2}\mathbf{W}_{k}^{T}\boldsymbol{\Lambda}_{k}^{-1}\mathbf{W}_{k}\mathbf{X}\boldsymbol{\beta
}_{k}\right)\nonumber\\
& &  \times \exp\left( i\omega\sum_{k=1}^{\infty}\boldsymbol{\beta
}_{k}^{T}\mathbf{X}^{T}\mathbf{W}_{k}^{T}\boldsymbol{\Lambda}_{k}^{-1}\mathbf{W}_{k}\mathbf{X}\boldsymbol{\beta
}_{k} \right).\nonumber\\
 \label{mgfSST}
\end{eqnarray}

\item[] (ii)
The characteristic functional of $\widetilde{\mbox{\textbf{SSR}}}$
is given by
\begin{eqnarray}
&& F_{\widetilde{\mbox{\textbf{SSR}}}}(i\omega )=
E\left[\exp\left(i\omega
\widetilde{\mbox{\textbf{SSR}}}\right)\right]\nonumber\\
& &  =\prod_{k=1}^{\infty}\left[\mbox{det}\left(\mathbf{I}_{n\times
n}-2i\omega
\boldsymbol{\Lambda}_{k}^{1/2}\mathbf{W}_{k}^{T}\boldsymbol{\Lambda
}_{k}^{-1}\mathbf{X}(\mathbf{X}^{T}\boldsymbol{\Lambda
}_{k}^{-1}\mathbf{X})^{-1}\mathbf{X}^{T}\boldsymbol{\Lambda
}_{k}^{-1}\mathbf{W}_{k}\boldsymbol{\Lambda}_{k}^{1/2}\right)\right]^{-1/2}
\nonumber
\end{eqnarray}
\begin{eqnarray}
& &\times \exp\left(-4\omega^{2}\sum_{k=1}^{\infty}\boldsymbol{\beta
}_{k}^{T}\mathbf{X}^{T}\mathbf{W}_{k}^{T}\boldsymbol{\Lambda
}_{k}^{-1}\mathbf{X}(\mathbf{X}^{T}\boldsymbol{\Lambda
}_{k}^{-1}\mathbf{X})^{-1}\mathbf{X}^{T}\boldsymbol{\Lambda
}_{k}^{-1}\mathbf{W}_{k}\boldsymbol{\Lambda}_{k}^{1/2}\right.\nonumber\\
& & \hspace*{1.5cm}\left.\times \left(\mathbf{I}_{n\times
n}-2i\omega
\boldsymbol{\Lambda}_{k}^{1/2}\mathbf{W}_{k}^{T}\boldsymbol{\Lambda
}_{k}^{-1}\mathbf{X}(\mathbf{X}^{T}\boldsymbol{\Lambda
}_{k}^{-1}\mathbf{X})^{-1}\mathbf{X}^{T}\boldsymbol{\Lambda
}_{k}^{-1}\mathbf{W}_{k}\boldsymbol{\Lambda}_{k}^{1/2}\right)^{-1}
\right.\nonumber\\
& &\hspace*{1.5cm}\left.\times
\boldsymbol{\Lambda}_{k}^{1/2}\mathbf{W}_{k}^{T}\boldsymbol{\Lambda
}_{k}^{-1}\mathbf{X}(\mathbf{X}^{T}\boldsymbol{\Lambda
}_{k}^{-1}\mathbf{X})^{-1}\mathbf{X}^{T}\boldsymbol{\Lambda
}_{k}^{-1}\mathbf{W}_{k}\mathbf{X}\boldsymbol{\beta
}_{k}\right)\nonumber\\
& &\times  \exp\left( i\omega\sum_{k=1}^{\infty}\boldsymbol{\beta
}_{k}^{T}\mathbf{X}^{T}\mathbf{W}_{k}^{T}\boldsymbol{\Lambda
}_{k}^{-1}\mathbf{X}(\mathbf{X}^{T}\boldsymbol{\Lambda
}_{k}^{-1}\mathbf{X})^{-1}\mathbf{X}^{T}\boldsymbol{\Lambda
}_{k}^{-1}\mathbf{W}_{k}\mathbf{X}\boldsymbol{\beta
}_{k} \right).\nonumber\\
\label{mgfSSR}
\end{eqnarray}

\item[] (iii)
The characteristic functional of $\widetilde{\mbox{\textbf{SSE}}}$
can be expressed as
\begin{eqnarray}
&& F_{\widetilde{\mbox{\textbf{SSE}}}}(i\omega)=
E\left[\exp\left(i\omega
\widetilde{\mbox{\textbf{SSE}}}\right)\right]\nonumber\\
& &  =\prod_{k=1}^{\infty}\left[\mbox{det}\left(\mathbf{I}_{n\times
n}-2i\omega
\boldsymbol{\Lambda}_{k}^{1/2}\left(\mathbf{W}^{T}_{k}\boldsymbol{\Lambda}_{k}^{-1}\mathbf{W}_{k}\right.\right.\right.
\nonumber\\
& &\hspace*{2cm}\left.\left.\left.
-\mathbf{W}^{T}_{k}\boldsymbol{\Lambda}_{k}^{-1}
\mathbf{X}(\mathbf{X}^{T}\boldsymbol{\Lambda}_{k}^{-1}\mathbf{X})^{-1}
\mathbf{X}^{T}\boldsymbol{\Lambda}_{k}^{-1}\mathbf{W}_{k}\right)\boldsymbol{\Lambda}_{k}^{1/2}\right)\right]^{-1/2}
\nonumber\\
& &\hspace*{-1cm}\times
\exp\left(-4\omega^{2}\sum_{k=1}^{\infty}\boldsymbol{\beta
}_{k}^{T}\mathbf{X}^{T}\left(\mathbf{W}^{T}_{k}\boldsymbol{\Lambda}_{k}^{-1}\mathbf{W}_{k}
\right.\right.\nonumber\\
& & \hspace*{-1cm}\left.\left.
\hspace*{1.5cm}-\mathbf{W}^{T}_{k}\boldsymbol{\Lambda}_{k}^{-1}
\mathbf{X}(\mathbf{X}^{T}\boldsymbol{\Lambda}_{k}^{-1}\mathbf{X})^{-1}
\mathbf{X}^{T}\boldsymbol{\Lambda}_{k}^{-1}\mathbf{W}_{k}\right)\boldsymbol{\Lambda}_{k}^{1/2}\right.\nonumber\\
& & \hspace*{0.5cm}\left.\times \left(\mathbf{I}_{n\times
n}-2i\omega
\boldsymbol{\Lambda}_{k}^{1/2}\left(\mathbf{W}^{T}_{k}\boldsymbol{\Lambda}_{k}^{-1}\mathbf{W}_{k}\right.\right.\right.
\nonumber\\
& & \hspace*{0.5cm}\left.\left.\left.
-\mathbf{W}^{T}_{k}\boldsymbol{\Lambda}_{k}^{-1}
\mathbf{X}(\mathbf{X}^{T}\boldsymbol{\Lambda}_{k}^{-1}\mathbf{X})^{-1}
\mathbf{X}^{T}\boldsymbol{\Lambda}_{k}^{-1}\mathbf{W}_{k}\right)\boldsymbol{\Lambda}_{k}^{1/2}\right)^{-1}
\right.\nonumber\\
& &\left.\times
\boldsymbol{\Lambda}_{k}^{1/2}\left(\mathbf{W}^{T}_{k}\boldsymbol{\Lambda}_{k}^{-1}\mathbf{W}_{k}-\mathbf{W}^{T}_{k}\boldsymbol{\Lambda}_{k}^{-1}
\mathbf{X}(\mathbf{X}^{T}\boldsymbol{\Lambda}_{k}^{-1}\mathbf{X})^{-1}
\mathbf{X}^{T}\boldsymbol{\Lambda}_{k}^{-1}\mathbf{W}_{k}\right)\mathbf{X}\boldsymbol{\beta
}_{k}\right)\nonumber
\end{eqnarray}
\begin{eqnarray}
& &\times  \exp\left( i\omega\sum_{k=1}^{\infty}\boldsymbol{\beta
}_{k}^{T}\mathbf{X}^{T}\left(\mathbf{W}^{T}_{k}\boldsymbol{\Lambda}_{k}^{-1}\mathbf{W}_{k}-\mathbf{W}^{T}_{k}\boldsymbol{\Lambda}_{k}^{-1}
\mathbf{X}(\mathbf{X}^{T}\boldsymbol{\Lambda}_{k}^{-1}\mathbf{X})^{-1}\right.\right.\nonumber\\
& &\hspace*{4cm}\left.\left.\times
\mathbf{X}^{T}\boldsymbol{\Lambda}_{k}^{-1}\mathbf{W}_{k}\right)\mathbf{X}\boldsymbol{\beta
}_{k} \right).\nonumber\\
 \label{mgfSSE}
\end{eqnarray}

\end{itemize}
\end{theorem}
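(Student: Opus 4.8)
The plan is to exploit the same block decomposition already used for the moment generating functionals in Theorem \ref{th1}, now invoking Lemma \ref{lem1} (Proposition 1.2.8 in \cite{Da Prato}) in place of the real moment generating function formula. From (\ref{SSE2b})--(\ref{hte}), (\ref{ssr22}) and Lemma \ref{lem10}, each of $\widetilde{\mbox{\textbf{SST}}}$, $\widetilde{\mbox{\textbf{SSR}}}$, $\widetilde{\mbox{\textbf{SSE}}}$ has the common shape $\sum_{k\geq1}\mathbf{Y}_{k}^{T}\mathbf{A}^{k}\mathbf{Y}_{k}=\langle\boldsymbol{\mathcal{A}}\mathbf{Y},\mathbf{Y}\rangle_{H^{n}}$, where $\mathbf{A}^{k}$ is $\mathbf{A}_{\widetilde{\mbox{\textbf{SST}}}}^{k}$, $\mathbf{A}_{\widetilde{\mbox{\textbf{SSR}}}}^{k}$ or $\mathbf{A}_{\widetilde{\mbox{\textbf{SSE}}}}^{k}$ of (\ref{idmgf}) and $\boldsymbol{\mathcal{A}}$ is the matrix operator on $\mathcal{H}=H^{n}$ with $\boldsymbol{\Phi}^{*}\boldsymbol{\mathcal{A}}\boldsymbol{\Phi}=(\mathbf{A}^{k})_{k\geq1}$, namely $\mathbf{W}^{*}\boldsymbol{R}_{\boldsymbol{\varepsilon}\boldsymbol{\varepsilon}}^{-1}\mathbf{W}$, $\mathbf{W}^{*}\boldsymbol{R}_{\boldsymbol{\varepsilon}\boldsymbol{\varepsilon}}^{-1}\mathbf{X}(\mathbf{X}^{T}\boldsymbol{R}_{\boldsymbol{\varepsilon}\boldsymbol{\varepsilon}}^{-1}\mathbf{X})^{-1}\mathbf{X}^{T}\boldsymbol{R}_{\boldsymbol{\varepsilon}\boldsymbol{\varepsilon}}^{-1}\mathbf{W}$, and the difference of the two, respectively. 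The first step is to observe that, since $\mathbf{W}$ is symmetric by construction (\ref{cw}) and $\boldsymbol{\Lambda}_{k}=\boldsymbol{\Lambda}_{k}^{T}$, each $\mathbf{A}^{k}$ is a symmetric $n\times n$ matrix, hence $\boldsymbol{\mathcal{A}}$ is a symmetric matrix operator and Lemma \ref{lem1} applies.

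Next I would compute a single factor. Writing $\mathbf{Y}=\mathbf{X}\boldsymbol{\beta}+\boldsymbol{\varepsilon}$ with $\boldsymbol{\varepsilon}$ zero-mean Gaussian of covariance $\boldsymbol{R}_{\boldsymbol{\varepsilon}\boldsymbol{\varepsilon}}$, the quadratic functional splits as $\langle\boldsymbol{\mathcal{A}}\boldsymbol{\varepsilon},\boldsymbol{\varepsilon}\rangle_{H^{n}}+2\langle\boldsymbol{\mathcal{A}}\mathbf{X}\boldsymbol{\beta},\boldsymbol{\varepsilon}\rangle_{H^{n}}+\langle\boldsymbol{\mathcal{A}}\mathbf{X}\boldsymbol{\beta},\mathbf{X}\boldsymbol{\beta}\rangle_{H^{n}}$; applying Lemma \ref{lem1} to $\boldsymbol{\varepsilon}$ with $\mathbf{M}=2i\omega\boldsymbol{\mathcal{A}}$ and $\mathbf{b}=2i\omega\boldsymbol{\mathcal{A}}\mathbf{X}\boldsymbol{\beta}$, and factoring out the deterministic phase $\exp(i\omega\langle\boldsymbol{\mathcal{A}}\mathbf{X}\boldsymbol{\beta},\mathbf{X}\boldsymbol{\beta}\rangle)$, produces the product of $[\det(\mathbf{I}-2i\omega\,\boldsymbol{R}_{\boldsymbol{\varepsilon}\boldsymbol{\varepsilon}}^{1/2}\boldsymbol{\mathcal{A}}\boldsymbol{R}_{\boldsymbol{\varepsilon}\boldsymbol{\varepsilon}}^{1/2})]^{-1/2}$, an exponential of a quadratic form in $\mathbf{X}\boldsymbol{\beta}$ built from the resolvent $(\mathbf{I}-2i\omega\boldsymbol{R}_{\boldsymbol{\varepsilon}\boldsymbol{\varepsilon}}^{1/2}\boldsymbol{\mathcal{A}}\boldsymbol{R}_{\boldsymbol{\varepsilon}\boldsymbol{\varepsilon}}^{1/2})^{-1}$, and that phase factor. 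Diagonalising in the common system $\{\phi_{k}\}$ turns $\boldsymbol{R}_{\boldsymbol{\varepsilon}\boldsymbol{\varepsilon}}^{1/2}\boldsymbol{\mathcal{A}}\boldsymbol{R}_{\boldsymbol{\varepsilon}\boldsymbol{\varepsilon}}^{1/2}$ into the block-diagonal sequence $(\boldsymbol{\Lambda}_{k}^{1/2}\mathbf{A}^{k}\boldsymbol{\Lambda}_{k}^{1/2})_{k\geq1}$, the Fredholm determinant into $\prod_{k}\det(\mathbf{I}_{n\times n}-2i\omega\boldsymbol{\Lambda}_{k}^{1/2}\mathbf{A}^{k}\boldsymbol{\Lambda}_{k}^{1/2})$, and $\langle\boldsymbol{\mathcal{A}}\mathbf{X}\boldsymbol{\beta},\mathbf{X}\boldsymbol{\beta}\rangle$ into $\sum_{k}\boldsymbol{\beta}_{k}^{T}\mathbf{X}^{T}\mathbf{A}^{k}\mathbf{X}\boldsymbol{\beta}_{k}$; specialising $\mathbf{A}^{k}$ to each of the three matrices of (\ref{idmgf}) then yields (\ref{mgfSST}), (\ref{mgfSSR}) and (\ref{mgfSSE}). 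Note that, unlike in Theorem \ref{th1}, no restriction on $\omega$ is needed: the spectrum of $2i\omega\boldsymbol{\Lambda}_{k}^{1/2}\mathbf{A}^{k}\boldsymbol{\Lambda}_{k}^{1/2}$ lies on the imaginary axis, so $\mathbf{I}_{n\times n}-2i\omega\boldsymbol{\Lambda}_{k}^{1/2}\mathbf{A}^{k}\boldsymbol{\Lambda}_{k}^{1/2}$ is invertible and its principal square root well defined for every real $\omega$ and every $k$.

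The delicate part is the convergence of the infinite product and of the exponential sums, and this is precisely where the hypotheses (\ref{dmsc}) and (\ref{c1asSST2}) enter. By the trace-class estimates already carried out in Appendices A and B, the operators $\mathbf{W}^{*}\boldsymbol{R}_{\boldsymbol{\varepsilon}\boldsymbol{\varepsilon}}^{-1}\mathbf{W}\boldsymbol{R}_{\boldsymbol{\varepsilon}\boldsymbol{\varepsilon}}$, $\mathbf{W}^{*}\boldsymbol{R}_{\boldsymbol{\varepsilon}\boldsymbol{\varepsilon}}^{-1}\mathbf{X}(\mathbf{X}^{T}\boldsymbol{R}_{\boldsymbol{\varepsilon}\boldsymbol{\varepsilon}}^{-1}\mathbf{X})^{-1}\mathbf{X}^{T}\boldsymbol{R}_{\boldsymbol{\varepsilon}\boldsymbol{\varepsilon}}^{-1}\mathbf{W}\boldsymbol{R}_{\boldsymbol{\varepsilon}\boldsymbol{\varepsilon}}$ and their difference are nuclear, so their Fredholm determinants are entire functions of the spectral parameter (see, e.g., \cite{SimonB}, Ch. 5) and the corresponding infinite products over $k$ converge for all real $\omega$, with no threshold of the type (\ref{eqSSTttt})--(\ref{tSSEtracenorm}) arising. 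The same bounds give $\sum_{k}\mbox{trace}(\boldsymbol{\Lambda}_{k}^{1/2}\mathbf{A}^{k}\boldsymbol{\Lambda}_{k}^{1/2})<\infty$, hence uniform-in-$k$ control of the resolvents $(\mathbf{I}_{n\times n}-2i\omega\boldsymbol{\Lambda}_{k}^{1/2}\mathbf{A}^{k}\boldsymbol{\Lambda}_{k}^{1/2})^{-1}$ on compact $\omega$-sets, while $\|\boldsymbol{\beta}\|_{H^{p}}^{2}=\sum_{k}\boldsymbol{\beta}_{k}^{T}\boldsymbol{\beta}_{k}<\infty$ together with these trace bounds makes the exponential series $\sum_{k}\boldsymbol{\beta}_{k}^{T}\mathbf{X}^{T}\mathbf{A}^{k}\boldsymbol{\Lambda}_{k}^{1/2}(\mathbf{I}_{n\times n}-2i\omega\boldsymbol{\Lambda}_{k}^{1/2}\mathbf{A}^{k}\boldsymbol{\Lambda}_{k}^{1/2})^{-1}\boldsymbol{\Lambda}_{k}^{1/2}\mathbf{A}^{k}\mathbf{X}\boldsymbol{\beta}_{k}$ and $\sum_{k}\boldsymbol{\beta}_{k}^{T}\mathbf{X}^{T}\mathbf{A}^{k}\mathbf{X}\boldsymbol{\beta}_{k}$ absolutely convergent. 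Passing to the limit in the product of the per-$k$ characteristic functions, which is legitimate by the independence of the blocks (cf. (\ref{es1})--(\ref{es2})), then gives the stated expressions. I expect the main obstacle to be exactly this uniform trace-class bookkeeping, together with checking the symmetry of the three matrices $\mathbf{A}^{k}$ so that Lemma \ref{lem1} genuinely applies.
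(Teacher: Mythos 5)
Your overall strategy coincides with the paper's own proof in Appendix D: both apply Lemma \ref{lem1} to the centered Gaussian vector $\mathbf{Y}-\mathbf{X}\boldsymbol{\beta}$ with $\mathbf{M}=2i\omega\boldsymbol{\mathcal{A}}$ and $\mathbf{b}=2i\omega\boldsymbol{\mathcal{A}}\mathbf{X}\boldsymbol{\beta}$, factor out the deterministic phase $\exp(i\omega\langle\boldsymbol{\mathcal{A}}\mathbf{X}\boldsymbol{\beta},\mathbf{X}\boldsymbol{\beta}\rangle)$, and diagonalize blockwise in the common system $\{\phi_{k}\}$ so that $\boldsymbol{\mathcal{A}}$ specializes to each of the three matrices in (\ref{idmgf}). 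The trace-class bookkeeping you defer to Appendices A and B, the symmetry check on the $\mathbf{A}^{k}$, and the use of $\|\boldsymbol{\beta}\|_{H^{p}}<\infty$ for the exponential sums all match the paper's argument.

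The one genuine gap is your claim that, unlike in Theorem \ref{th1}, no restriction on $\omega$ is needed. Lemma \ref{lem1} is stated under the hypothesis $\|\mathbf{R}_{\mathbf{Y}\mathbf{Y}}^{1/2}\mathbf{M}\mathbf{R}_{\mathbf{Y}\mathbf{Y}}^{1/2}\|_{\mathcal{L}(\mathcal{H})}<1$; with $\mathbf{M}=2i\omega\boldsymbol{\mathcal{A}}$ this norm equals $2|\omega|\,\|\boldsymbol{R}_{\boldsymbol{\varepsilon}\boldsymbol{\varepsilon}}^{1/2}\boldsymbol{\mathcal{A}}\boldsymbol{R}_{\boldsymbol{\varepsilon}\boldsymbol{\varepsilon}}^{1/2}\|_{\mathcal{L}(\mathcal{H})}$, which exceeds $1$ for $|\omega|$ large, so the lemma cannot be invoked verbatim for arbitrary real $\omega$. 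Pointwise invertibility of $\mathbf{I}_{n\times n}-2i\omega\boldsymbol{\Lambda}_{k}^{1/2}\mathbf{A}^{k}\boldsymbol{\Lambda}_{k}^{1/2}$ and entirety of the Fredholm determinant guarantee only that the right-hand sides of (\ref{mgfSST})--(\ref{mgfSSE}) are well defined for all $\omega$; they do not by themselves establish that these expressions equal $E[\exp(i\omega\,\cdot\,)]$ outside the norm-ball where Lemma \ref{lem1} applies. The paper closes this by first proving the identities for $\omega$ below the thresholds $B_{\widetilde{\mbox{\textbf{SST}}}},\mathrm{IT}_{\widetilde{\mbox{\textbf{SST}}}}$ (and their analogues in (\ref{scomega}) and (\ref{eqowc3})) and then extending to all $\omega$ via the analytic continuation and uniqueness argument of \cite{Lukacs}, Th.~7.1.1. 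You need that final continuation step (or a direct computation valid for all $\omega$) to complete the proof.
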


The proof can be found  in Appendix D.

\section{Linear functional hypothesis testing} \label{FLHT}
Consider the null  hypothesis
$$H_{0}:\
\mathbf{K}\boldsymbol{\beta }= \mathbf{C},$$ \noindent where
$\mathbf{C}\in H^{m},$ and  $\mathbf{K}$ is an matrix operator  from
$H^{p}$ into $H^{m}$ satisfying
\begin{eqnarray}
\hspace*{-1cm}\mathbf{K}&=&\left[\begin{array}{ccc} K_{11}& \dots & K_{1p}\\
\dots &\dots &\dots\\
K_{m1} &\dots & K_{mp}\\
\end{array}\right],\ K_{ij}(f)(g)=\sum_{k=1}^{\infty }\lambda_{k}(K_{ij})
\left\langle
\phi_{k},g\right\rangle_{H}\left\langle\phi_{k},f\right\rangle_{H}
\label{diagtestop}
\end{eqnarray}
\noindent for $f,g\in H,$ and for $i=1,\dots,m,$ and $j=1,\dots,p,$ in terms
of the eigenvectors $\phi_{k},$ $k\geq 1,$ of the functional entries
of $\mathbf{R}_{\boldsymbol{\varepsilon}\boldsymbol{\varepsilon}}.$
Thus, we restrict out attention to test some contrasts on the
functional components of $\boldsymbol{\beta},$ in terms of the class
of matrix operators  $\mathbf{K}$ such that, for each $l\geq 1,$
$\Phi_{l}^{*}\mathbf{K}\Phi_{l}=\mathbf{K}_{l},$
\begin{equation}
\mathbf{K}_{l}=\left[\begin{array}{ccc}
\lambda_{l}(K_{11}) & \dots & \lambda_{l}(K_{1p})\\
\dots & \dots & \dots \\
\lambda_{l}(K_{m1}) & \dots & \lambda_{l}(K_{mp})\\
\end{array}\right]_{m\times p}.\label{eqklc}
\end{equation}

\begin{remark}
Since the developed Functional Analysis of Variance in a
multivariate context is referred to the orthogonal basis of $H,$
$\{\phi_{k},\ k\geq 1\},$  which is assumed to be known (see Remark
\ref{kev} and Section \ref{Exm}), a natural way of defining possible
linear transformations $\mathbf{K}$ of our functional parameter
vector $\boldsymbol{\beta}\in H^{p},$ to test some contrasts, is
given by  (\ref{eqklc}), with
$\Phi_{l}^{*}\mathbf{K}\Phi_{l}=\mathbf{K}_{l},$ for   $l\geq 1.$
  \end{remark}
\begin{lemma}
\label{lemHT1} The generalized least squares estimator
$\widehat{\boldsymbol{\beta }}$ defined in equations
(\ref{lse1})--(\ref{solutlsep}) satisfies
$\Phi_{k}^{*}(\widehat{\boldsymbol{\beta } })\sim
\mathcal{N}(\Phi_{k}^{*}(\boldsymbol{\beta
}),(\mathbf{X}^{T}\boldsymbol{\Lambda}_{k}^{-1}\mathbf{X})^{-1}),$
for each $k\geq 1.$  Under condition (\ref{dmsc}), this estimator is
a Hilbert-valued Gaussian random variable ($H^{p}$-valued random
variable), with functional mean $\boldsymbol{\beta }$ and trace
covariance operator $\mathbf{Q}$ such that, for each $k\geq 1,$
$\Phi_{k}^{*}\mathbf{Q}\Phi_{k}=(\mathbf{X}^{T}\boldsymbol{\Lambda}_{k}^{-1}\mathbf{X})^{-1}.$
Equivalently, $$\mathbf{Q}=\left[\begin{array}{cccc}
\sum_{k=1}^{\infty }q_{k1}\phi_{k}\otimes \phi_{k} & \dots & \dots &
\sum_{k=1}^{\infty
}q_{k1p}\phi_{k}\otimes \phi_{k}\\
\vdots & \vdots & \vdots & \vdots \\
\vdots & \vdots & \vdots & \vdots \\
\sum_{k=1}^{\infty }q_{kp1}\phi_{k}\otimes \phi_{k} & \dots & \dots
& \sum_{k=1}^{\infty
}q_{kp}\phi_{k}\otimes \phi_{k} \\
\end{array}
\right],$$ \noindent where,  for each $k\geq 1,$ $q_{ki}$ represents
the $i$th diagonal element of matrix
$(\mathbf{X}^{T}\boldsymbol{\Lambda}_{k}^{-1}\mathbf{X})^{-1},$
$i=1,\dots,p,$  and $q_{kij}$ and $q_{kji},$ with $q_{kij}=q_{kji},$
denote its $(i,j)$ and $(j,i)$ entries, $i\neq j,$ and
$i,j=1,\dots,p.$
\end{lemma}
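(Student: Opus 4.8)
The plan is to treat the finite-dimensional projections first and then lift to $H^{p}$. First I would write $\mathbf{Y}_{k}=\Phi_{k}^{*}(\mathbf{Y})=\mathbf{X}\boldsymbol{\beta}_{k}+\boldsymbol{\varepsilon}_{k}$, with $\boldsymbol{\beta}_{k}=\Phi_{k}^{*}(\boldsymbol{\beta})$ and $\boldsymbol{\varepsilon}_{k}=\Phi_{k}^{*}(\boldsymbol{\varepsilon})$. By \textbf{Assumption A0} and identity (\ref{eqprocovop}), $\boldsymbol{\varepsilon}_{k}$ is an $n$-dimensional zero-mean Gaussian vector with covariance matrix $\boldsymbol{\Lambda}_{k}$, and the orthogonality condition (\ref{eqorth}) makes $\{\boldsymbol{\varepsilon}_{k}\}_{k\geq 1}$ mutually independent; hence $\mathbf{Y}_{k}\sim\mathcal{N}(\mathbf{X}\boldsymbol{\beta}_{k},\boldsymbol{\Lambda}_{k})$, independent in $k$. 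Since by (\ref{lse1}) $\widehat{\boldsymbol{\beta}}_{k}=(\mathbf{X}^{T}\boldsymbol{\Lambda}_{k}^{-1}\mathbf{X})^{-1}\mathbf{X}^{T}\boldsymbol{\Lambda}_{k}^{-1}\mathbf{Y}_{k}$ is a linear image of $\mathbf{Y}_{k}$, it is Gaussian, with mean $(\mathbf{X}^{T}\boldsymbol{\Lambda}_{k}^{-1}\mathbf{X})^{-1}\mathbf{X}^{T}\boldsymbol{\Lambda}_{k}^{-1}\mathbf{X}\boldsymbol{\beta}_{k}=\boldsymbol{\beta}_{k}$ and covariance $(\mathbf{X}^{T}\boldsymbol{\Lambda}_{k}^{-1}\mathbf{X})^{-1}\mathbf{X}^{T}\boldsymbol{\Lambda}_{k}^{-1}\boldsymbol{\Lambda}_{k}\boldsymbol{\Lambda}_{k}^{-1}\mathbf{X}(\mathbf{X}^{T}\boldsymbol{\Lambda}_{k}^{-1}\mathbf{X})^{-1}=(\mathbf{X}^{T}\boldsymbol{\Lambda}_{k}^{-1}\mathbf{X})^{-1}$. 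This gives the first assertion, and the $\widehat{\boldsymbol{\beta}}_{k}$ inherit mutual independence from the $\mathbf{Y}_{k}$.

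Under (\ref{dmsc}), Proposition \ref{pr1} and its proof already give $\widehat{\boldsymbol{\beta}}=\boldsymbol{\Phi}((\widehat{\boldsymbol{\beta}}_{k})_{k\geq 1})\in H^{p}$ a.s.\ and $E\|\widehat{\boldsymbol{\beta}}\|_{H^{p}}^{2}=\sum_{k}E\|\widehat{\boldsymbol{\beta}}_{k}\|^{2}<\infty$, so the partial expansions of $\widehat{\boldsymbol{\beta}}$ converge to it in $L^{2}(\Omega;H^{p})$. To show $\widehat{\boldsymbol{\beta}}$ is $H^{p}$-valued Gaussian I would check that $\langle\widehat{\boldsymbol{\beta}},\mathbf{h}\rangle_{H^{p}}=\sum_{k=1}^{\infty}\widehat{\boldsymbol{\beta}}_{k}^{T}\mathbf{h}_{k}$ is real Gaussian for each $\mathbf{h}=(h_{1},\dots,h_{p})^{T}\in H^{p}$, with $\mathbf{h}_{k}=\Phi_{k}^{*}(\mathbf{h})$. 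Every partial sum $\sum_{k=1}^{N}\widehat{\boldsymbol{\beta}}_{k}^{T}\mathbf{h}_{k}$ is Gaussian, since finite families of projections of the $H^{n}$-valued Gaussian $\mathbf{Y}$ are jointly Gaussian and the $\widehat{\boldsymbol{\beta}}_{k}$ are their linear images; the partial sums converge (a.s.\ and in $L^{2}$) to $\langle\widehat{\boldsymbol{\beta}},\mathbf{h}\rangle_{H^{p}}$, and the family of (possibly degenerate) Gaussian laws is closed under weak limits, so the limit is Gaussian. Hence $\widehat{\boldsymbol{\beta}}$ is a Gaussian random variable in $H^{p}$.

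It then remains to identify its mean and covariance operator. Linearity and $E[\widehat{\boldsymbol{\beta}}_{k}]=\boldsymbol{\beta}_{k}$ give $E[\widehat{\boldsymbol{\beta}}]=\boldsymbol{\Phi}((\boldsymbol{\beta}_{k})_{k\geq 1})=\boldsymbol{\beta}$. For the covariance operator $\mathbf{Q}$, I would write $\langle\widehat{\boldsymbol{\beta}}-\boldsymbol{\beta},\mathbf{h}\rangle_{H^{p}}=\sum_{k}(\widehat{\boldsymbol{\beta}}_{k}-\boldsymbol{\beta}_{k})^{T}\mathbf{h}_{k}$ (convergence in $L^{2}$ as above), use continuity of the covariance on $L^{2}(\Omega)$ together with the independence across $k$ to discard cross terms, and obtain $\langle\mathbf{Q}\mathbf{h},\mathbf{g}\rangle_{H^{p}}=\sum_{k=1}^{\infty}\mathbf{g}_{k}^{T}(\mathbf{X}^{T}\boldsymbol{\Lambda}_{k}^{-1}\mathbf{X})^{-1}\mathbf{h}_{k}$ for all $\mathbf{h},\mathbf{g}\in H^{p}$. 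By the notational correspondence (\ref{eqmpn})--(\ref{eqmpn2}) this says precisely $\Phi_{k}^{*}\mathbf{Q}\Phi_{k}=(\mathbf{X}^{T}\boldsymbol{\Lambda}_{k}^{-1}\mathbf{X})^{-1}$ for every $k$, which reproduces the displayed matrix-operator form of $\mathbf{Q}$ with $q_{ki},q_{kij}$ its entries; finally $\mathrm{trace}(\mathbf{Q})=\sum_{k}\mathrm{trace}(\mathbf{X}^{T}\boldsymbol{\Lambda}_{k}^{-1}\mathbf{X})^{-1}<\infty$ by (\ref{dmsc}), so $\mathbf{Q}$ is trace class.

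The step I expect to be the main obstacle is the soft part of the middle and last paragraphs: passing from Gaussianity of the finite-dimensional marginals $\widehat{\boldsymbol{\beta}}_{k}$ to Gaussianity of the $H^{p}$-valued random element, and justifying the interchange of the infinite projection sum with the expectation in the covariance identity (which is where the $L^{2}$ bound from the proof of Proposition \ref{pr1} and the closedness of the Gaussian class under the relevant mode of convergence are used); everything else is routine linear algebra on the $n\times p$ matrices.
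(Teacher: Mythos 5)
Your proposal is correct and follows essentially the same route as the paper, which gives no detailed argument but simply cites classical generalized least-squares theory for the finite-dimensional marginals and Da Prato--Zabczyk for the Hilbert-valued Gaussian characterization under condition (\ref{dmsc}). You have written out in full exactly the standard argument those references supply: the per-$k$ GLS computation, the lift to $H^{p}$ via the $L^{2}$ bound of Proposition \ref{pr1} and closedness of Gaussian laws under limits, and the trace-class property of $\mathbf{Q}$ from (\ref{dmsc}).
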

The proof directly follows from classical generalized least-squares
theory (see, for example,   \cite{Hocking}  and \cite{Ruud}), and from
condition (\ref{dmsc}) (see, for example, \cite{Da Prato}, pp.8-17,
Chapter 1).

Note that
 $\Phi^{*}_{l}(\mathbf{C})=\mathbf{C}_{l}$
represents a $m\times 1$ vector for each $l\geq 1.$ Hence, for each
$k\geq 1,$ $\Phi_{k}^{*}(\mathbf{K}\boldsymbol{\beta
})=[\mathbf{K}\boldsymbol{\beta }]_{k}=\mathbf{C}_{k}.$
As a direct consequence of Lemma \ref{lemHT1}, the following result
is considered (see, for example,  \cite{Hocking} and \cite{Ruud}).
\begin{lemma}
\label{lem2HT} Under the null hypothesis
$H_{0}:\mathbf{K}\boldsymbol{\beta }= \mathbf{C},$ for each $l\geq
1,$
$$[\Phi_{l}^{*}\mathbf{K}\Phi_{l}\Phi_{l}^{*}\widehat{\boldsymbol{\beta
}}-\Phi_{l}^{*}\mathbf{C}]^{T}[\Phi_{l}^{*}\mathbf{K}\Phi_{l}
(\mathbf{X}^{T}\boldsymbol{\Lambda}_{l}^{-1}\mathbf{X})^{-1}
[\Phi_{l}^{*}\mathbf{K}\Phi_{l}]^{T}]^{-1}\Phi_{l}^{*}\mathbf{K}\Phi_{l}\Phi_{l}^{*}\widehat{\boldsymbol{\beta
}}-\Phi_{l}^{*}\mathbf{C},$$ \noindent follows a chi-squared
distribution with $m$ degrees of freedom.
\end{lemma}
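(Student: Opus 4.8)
The plan is to reduce the statement to the classical fact that a nondegenerate centred Gaussian quadratic form is chi-squared distributed, applied at the fixed projection index $l$. First I would note that, by Lemma \ref{lemHT1}, for the fixed $l\geq 1$ the projected estimator $\widehat{\boldsymbol{\beta}}_{l}:=\Phi_{l}^{*}(\widehat{\boldsymbol{\beta}})$ is an $\mathbb{R}^{p}$-valued Gaussian vector, $\widehat{\boldsymbol{\beta}}_{l}\sim\mathcal{N}(\boldsymbol{\beta}_{l},(\mathbf{X}^{T}\boldsymbol{\Lambda}_{l}^{-1}\mathbf{X})^{-1})$, with $\boldsymbol{\beta}_{l}=\Phi_{l}^{*}(\boldsymbol{\beta})$. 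Writing $\mathbf{K}_{l}=\Phi_{l}^{*}\mathbf{K}\Phi_{l}$ and $\mathbf{C}_{l}=\Phi_{l}^{*}(\mathbf{C})$ as in (\ref{diagtestop})--(\ref{eqklc}), the linear image $\mathbf{K}_{l}\widehat{\boldsymbol{\beta}}_{l}-\mathbf{C}_{l}$ is then an $m\times 1$ Gaussian vector with mean $\mathbf{K}_{l}\boldsymbol{\beta}_{l}-\mathbf{C}_{l}$ and covariance $\boldsymbol{\Sigma}_{l}:=\mathbf{K}_{l}(\mathbf{X}^{T}\boldsymbol{\Lambda}_{l}^{-1}\mathbf{X})^{-1}\mathbf{K}_{l}^{T}$.

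Next I would bring in the null hypothesis. Since $H_{0}:\mathbf{K}\boldsymbol{\beta}=\mathbf{C}$ holds in $H^{m}$, projecting onto $\phi_{l}$ and using that $\mathbf{K}$ is diagonalized by $\{\phi_{k}\}_{k\geq 1}$ (so that $\Phi_{l}^{*}(\mathbf{K}\boldsymbol{\beta})=\mathbf{K}_{l}\Phi_{l}^{*}(\boldsymbol{\beta})=\mathbf{K}_{l}\boldsymbol{\beta}_{l}$) yields $\mathbf{K}_{l}\boldsymbol{\beta}_{l}=[\mathbf{K}\boldsymbol{\beta}]_{l}=\mathbf{C}_{l}$. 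Hence under $H_{0}$ the vector $\mathbf{z}_{l}:=\mathbf{K}_{l}\widehat{\boldsymbol{\beta}}_{l}-\mathbf{C}_{l}\sim\mathcal{N}(\mathbf{0},\boldsymbol{\Sigma}_{l})$. The quantity in the statement is precisely $\mathbf{z}_{l}^{T}\boldsymbol{\Sigma}_{l}^{-1}\mathbf{z}_{l}$, so it remains to invoke the standard result that, for $\mathbf{z}\sim\mathcal{N}(\mathbf{0},\boldsymbol{\Sigma})$ with $\boldsymbol{\Sigma}$ an $m\times m$ positive definite matrix, $\boldsymbol{\Sigma}^{-1/2}\mathbf{z}\sim\mathcal{N}(\mathbf{0},\mathbf{I}_{m\times m})$, whence $\mathbf{z}^{T}\boldsymbol{\Sigma}^{-1}\mathbf{z}=\|\boldsymbol{\Sigma}^{-1/2}\mathbf{z}\|^{2}$ is a sum of $m$ independent squared standard normals, i.e.\ $\chi^{2}(m)$ (see, e.g., \cite{Hocking}, \cite{Ruud}). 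This argument is identical for every $l\geq 1$.

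The one point that needs care — and is the main obstacle — is the invertibility of $\boldsymbol{\Sigma}_{l}$, which the statement tacitly presupposes. This requires $(\mathbf{X}^{T}\boldsymbol{\Lambda}_{l}^{-1}\mathbf{X})^{-1}$ to be well-defined and positive definite: this is guaranteed since $\boldsymbol{\Lambda}_{l}$ is strictly positive definite (its eigenvalues are the $\lambda_{kl}>0$, as the $R_{\varepsilon_{i}\varepsilon_{i}}$ are strictly positive trace operators) and $\mathbf{X}$ has full column rank $p$, so that $\mathbf{X}^{T}\boldsymbol{\Lambda}_{l}^{-1}\mathbf{X}$ is positive definite; it additionally requires $\mathbf{K}_{l}$ to have full row rank $m$, so that $\boldsymbol{\Sigma}_{l}=\mathbf{K}_{l}(\mathbf{X}^{T}\boldsymbol{\Lambda}_{l}^{-1}\mathbf{X})^{-1}\mathbf{K}_{l}^{T}$ inherits positive definiteness. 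I would state this rank condition on $\mathbf{K}_{l}$ — equivalently on the eigenvalue matrices $\big(\lambda_{l}(K_{ij})\big)$ in (\ref{eqklc}) — explicitly as a hypothesis of the lemma; under it the three steps above combine to give the claim.
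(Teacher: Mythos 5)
Your proof is correct and follows exactly the route the paper intends: the paper gives no written proof, stating only that the lemma is ``a direct consequence of Lemma \ref{lemHT1}'' together with the classical result on Gaussian quadratic forms from \cite{Hocking} and \cite{Ruud}, which is precisely the argument you spell out. Your explicit flagging of the rank conditions needed for $\boldsymbol{\Sigma}_{l}=\mathbf{K}_{l}(\mathbf{X}^{T}\boldsymbol{\Lambda}_{l}^{-1}\mathbf{X})^{-1}\mathbf{K}_{l}^{T}$ to be invertible is a point the paper leaves tacit, and is a worthwhile addition rather than a deviation.
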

Lemma \ref{lem2HT} allows the application of an extended version of the finite-dimensional
 hipothesis testing  approach for functional data proposed in
\cite{Cuesta-Albertos10}, based on the formulation of Cram\'er-Wold
theorem derived in \cite{Cuesta-Albertos07}. In our multivariate infinite-dimensional  one-way  ANOVA,   the random
vectors should be selected from a Gaussian distribution on the Hilbert space
$\widetilde{\mathcal{H}}=H^{m}$ with non-degenerate $m$-dimensional
projections.

The next proposition allows the application of Lemma \ref{lem1} for the formulation of a functional linear test.

\begin{proposition}
\label{lhtf} Assume that condition (\ref{dmsc}) holds, and that
matrix operator   $\mathbf{K}$   is such that
\begin{equation}
\|(\mathbf{X}^{T}\mathbf{R}_{\boldsymbol{\varepsilon}\boldsymbol{\varepsilon}}^{-1}\mathbf{X})^{-1/2}
\mathbf{K}^{T}\mathbf{K}(\mathbf{X}^{T}\mathbf{R}_{\boldsymbol{\varepsilon}\boldsymbol{\varepsilon}}^{-1}
\mathbf{X})^{-1/2}\|_{\mathcal{L}(H^{n})=\mathcal{L}(\mathcal{H})}<1.\label{eqcKt}
\end{equation}
\noindent Then, under the null hypothesis
$H_{0}:\mathbf{K}\boldsymbol{\beta }= \mathbf{C},$ the test
statistic $$\left\langle \mathbf{K}\widehat{\boldsymbol{\beta
}}-\mathbf{C},\mathbf{K}\widehat{\boldsymbol{\beta
}}-\mathbf{C}\right\rangle_{\mathcal{H}=H^{n}}=\sum_{l=1}^{\infty
}(\widehat{\boldsymbol{\beta }}_{l}-\boldsymbol{\beta
}_{l})^{T}\mathbf{K}_{l}^{T}\mathbf{K}_{l}(\widehat{\boldsymbol{\beta
}}_{l}-\boldsymbol{\beta }_{l}),$$ \noindent  has
 characteristic functional
given by
\begin{eqnarray}& & E\left[\exp\left(i\omega\left\langle \mathbf{K}\widehat{\boldsymbol{\beta
}}-\mathbf{C},\mathbf{K}\widehat{\boldsymbol{\beta
}}-\mathbf{C}\right\rangle_{\mathcal{H}=H^{n}}\right)\right]\nonumber\\
& & =E\left[\exp\left(i\omega\sum_{l=1}^{\infty
}(\widehat{\boldsymbol{\beta }}_{l}-\boldsymbol{\beta
}_{l})^{T}\mathbf{K}_{l}^{T}\mathbf{K}_{l}(\widehat{\boldsymbol{\beta
}}_{l}-\boldsymbol{\beta
}_{l})\right)\right]\nonumber\\
& &= \prod_{l=1}^{\infty }\left[\mbox{det}\left(\mathbf{I}_{p\times
p}-2i\omega (\mathbf{X}^{T}\boldsymbol{\Lambda
}_{l}^{-1}\mathbf{X})^{-1/2}
\mathbf{K}^{T}_{l}\mathbf{K}_{l}(\mathbf{X}^{T}\boldsymbol{\Lambda
}_{l}^{-1} \mathbf{X})^{-1/2}\right)\right]^{-1/2}.
\label{chprchlht}
\end{eqnarray}
\noindent Here, as before, $\widehat{\boldsymbol{\beta }}_{l}=
\Phi_{l}^{*}(\widehat{\boldsymbol{\beta }}),$ and
$\Phi_{l}^{*}\mathbf{K}\Phi_{l}=\mathbf{K}_{l},$ for each $l\geq 1.$
\end{proposition}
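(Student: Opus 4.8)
The plan is to reduce the statement to a single application of Lemma~\ref{lem1} to the $H^{p}$-valued Gaussian random variable $\widehat{\boldsymbol{\beta}}-\boldsymbol{\beta}$. First I would use that, under $H_{0}$, $\mathbf{C}=\mathbf{K}\boldsymbol{\beta}$, so that $\mathbf{K}\widehat{\boldsymbol{\beta}}-\mathbf{C}=\mathbf{K}(\widehat{\boldsymbol{\beta}}-\boldsymbol{\beta})$, and hence
\[
\left\langle \mathbf{K}\widehat{\boldsymbol{\beta}}-\mathbf{C},\mathbf{K}\widehat{\boldsymbol{\beta}}-\mathbf{C}\right\rangle_{\mathcal{H}}=\left\langle \mathbf{K}^{T}\mathbf{K}(\widehat{\boldsymbol{\beta}}-\boldsymbol{\beta}),\widehat{\boldsymbol{\beta}}-\boldsymbol{\beta}\right\rangle_{H^{p}}=\sum_{l=1}^{\infty}(\widehat{\boldsymbol{\beta}}_{l}-\boldsymbol{\beta}_{l})^{T}\mathbf{K}_{l}^{T}\mathbf{K}_{l}(\widehat{\boldsymbol{\beta}}_{l}-\boldsymbol{\beta}_{l}).
\]
By Lemma~\ref{lemHT1} and condition (\ref{dmsc}), $\mathbf{Z}:=\widehat{\boldsymbol{\beta}}-\boldsymbol{\beta}$ is a zero-mean $H^{p}$-valued Gaussian random variable with trace covariance operator $\mathbf{Q}=(\mathbf{X}^{T}\mathbf{R}_{\boldsymbol{\varepsilon}\boldsymbol{\varepsilon}}^{-1}\mathbf{X})^{-1}$, i.e.\ $\Phi_{l}^{*}\mathbf{Q}\Phi_{l}=(\mathbf{X}^{T}\boldsymbol{\Lambda}_{l}^{-1}\mathbf{X})^{-1}$ for every $l\geq 1$; in particular $\mathbf{Q}^{1/2}=(\mathbf{X}^{T}\mathbf{R}_{\boldsymbol{\varepsilon}\boldsymbol{\varepsilon}}^{-1}\mathbf{X})^{-1/2}$.

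Next I would apply Lemma~\ref{lem1} to $\mathbf{Z}$, taking $\mathbf{M}=\mathbf{K}^{T}\mathbf{K}$ (a symmetric matrix operator on $H^{p}$) and $\mathbf{b}=\underline{\mathbf{0}}$; its hypothesis $\|\mathbf{Q}^{1/2}\mathbf{K}^{T}\mathbf{K}\mathbf{Q}^{1/2}\|_{\mathcal{L}(H^{p})}<1$ is precisely assumption (\ref{eqcKt}), since $\mathbf{Q}^{1/2}=(\mathbf{X}^{T}\mathbf{R}_{\boldsymbol{\varepsilon}\boldsymbol{\varepsilon}}^{-1}\mathbf{X})^{-1/2}$. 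Replacing $\mathbf{M}$ by $2i\omega\,\mathbf{K}^{T}\mathbf{K}$, and noting that the exponential factor in (\ref{eqchfhrv}) equals $1$ because $\mathbf{b}=\underline{\mathbf{0}}$, one gets, for $|\omega|$ small enough that the norm condition of Lemma~\ref{lem1} still holds,
\[
E\!\left[\exp\!\left(i\omega\left\langle \mathbf{K}^{T}\mathbf{K}\mathbf{Z},\mathbf{Z}\right\rangle_{H^{p}}\right)\right]=\left[\mathrm{det}\left(\mathbf{I}-2i\omega\,\mathbf{Q}^{1/2}\mathbf{K}^{T}\mathbf{K}\mathbf{Q}^{1/2}\right)\right]^{-1/2}.
\]
As in the proof of Theorem~\ref{th1}, an analytic continuation argument (\cite{Lukacs}, Th.~7.1.1) then extends this identity to all real $\omega$; this is legitimate because the eigenvalues of the positive, trace-class operator $\mathbf{Q}^{1/2}\mathbf{K}^{T}\mathbf{K}\mathbf{Q}^{1/2}$ are non-negative, so $\mathbf{I}-2i\omega\,\mathbf{Q}^{1/2}\mathbf{K}^{T}\mathbf{K}\mathbf{Q}^{1/2}$ is boundedly invertible for every $\omega\in\mathbb{R}$.

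Finally I would factorise the Fredholm determinant. Since $\mathbf{K}$, $\mathbf{Q}$ and $\mathbf{R}_{\boldsymbol{\varepsilon}\boldsymbol{\varepsilon}}$ are all diagonalised by the common orthonormal system $\{\phi_{l}\}_{l\geq1}$, the operator $\mathbf{Q}^{1/2}\mathbf{K}^{T}\mathbf{K}\mathbf{Q}^{1/2}$ is block-diagonal, with $l$-th block $(\mathbf{X}^{T}\boldsymbol{\Lambda}_{l}^{-1}\mathbf{X})^{-1/2}\mathbf{K}_{l}^{T}\mathbf{K}_{l}(\mathbf{X}^{T}\boldsymbol{\Lambda}_{l}^{-1}\mathbf{X})^{-1/2}$, and it is of trace class because
\[
\mathrm{trace}\left(\mathbf{Q}^{1/2}\mathbf{K}^{T}\mathbf{K}\mathbf{Q}^{1/2}\right)=\sum_{l=1}^{\infty}\mathrm{trace}\left(\mathbf{K}_{l}^{T}\mathbf{K}_{l}(\mathbf{X}^{T}\boldsymbol{\Lambda}_{l}^{-1}\mathbf{X})^{-1}\right)\leq\left(\sup_{l}\|\mathbf{K}_{l}\|^{2}\right)\sum_{l=1}^{\infty}\mathrm{trace}\left(\mathbf{X}^{T}\boldsymbol{\Lambda}_{l}^{-1}\mathbf{X}\right)^{-1}<\infty
\]
by (\ref{dmsc}) and the boundedness of $\mathbf{K}$ (equivalently, $\sup_{l}\|\mathbf{K}_{l}\|<\infty$). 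Hence its Fredholm determinant equals the convergent infinite product over $l\geq1$ of the $p\times p$ determinants $\mathrm{det}(\mathbf{I}_{p\times p}-2i\omega(\mathbf{X}^{T}\boldsymbol{\Lambda}_{l}^{-1}\mathbf{X})^{-1/2}\mathbf{K}_{l}^{T}\mathbf{K}_{l}(\mathbf{X}^{T}\boldsymbol{\Lambda}_{l}^{-1}\mathbf{X})^{-1/2})$, which yields (\ref{chprchlht}); the same bound gives $E[\langle \mathbf{K}^{T}\mathbf{K}\mathbf{Z},\mathbf{Z}\rangle_{H^{p}}]<\infty$, so the test statistic is a.s.\ finite and the manipulations above are legitimate. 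The step I expect to be the main obstacle is the passage from the Laplace-transform identity of Lemma~\ref{lem1}, valid only on the interval where the operator-norm condition holds for $2i\omega\,\mathbf{K}^{T}\mathbf{K}$, to the characteristic-functional identity for all real $\omega$; this requires the analytic-continuation and uniqueness argument described above, for which the trace-class property of $\mathbf{Q}^{1/2}\mathbf{K}^{T}\mathbf{K}\mathbf{Q}^{1/2}$ — itself a consequence of (\ref{dmsc}) — is essential.
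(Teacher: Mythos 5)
Your proposal is correct and follows essentially the same route as the paper: the paper's own (very terse) proof says precisely that the result follows from Lemmas \ref{lem1} and \ref{lemHT1} under condition (\ref{eqcKt}), taking $\mathbf{M}=2i\omega\,\mathbf{K}^{*}\mathbf{K}$ and $\mathbf{Y}=\widehat{\boldsymbol{\beta}}-\boldsymbol{\beta}$. Your additional details — the block-diagonal factorisation of the Fredholm determinant over the common eigenvector system, the trace-class bound via (\ref{dmsc}), and the analytic-continuation step — simply make explicit what the paper leaves implicit.
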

The proof of Proposition \ref{lhtf}  directly follows from Lemmas \ref{lem1}
and \ref{lemHT1}, under condition (\ref{eqcKt}), considering $\mathbf{M}=2i\omega
\mathbf{K}^{*}\mathbf{K}$ and $\mathbf{Y}=\widehat{\boldsymbol{\beta
}}-\boldsymbol{\beta }$ in Lemma \ref{lem1}.
\begin{theorem}
\label{th7} Assume that the conditions considered in Proposition
\ref{lhtf} hold. Then, for testing
$H_{0}:\mathbf{K}\boldsymbol{\beta }=\mathbf{C}$ versus
$H_{1}:\mathbf{K}\boldsymbol{\beta }\neq \mathbf{C},$ at level
$\alpha ,$ there exists a test $\psi $ given by:
$$\psi=\left\{\begin{array}{l}
1\quad \mbox{if}\quad S_{H_{0}}(\mathbf{Y})>C(H_{0},\alpha ),\\
0\quad \mbox{otherwise.}\\
\end{array}\right.$$
\noindent Here, $S_{H_{0}}(\mathbf{Y})=\left\langle \mathbf{K}\widehat{\boldsymbol{\beta
}}-\mathbf{C},\mathbf{K}\widehat{\boldsymbol{\beta
}}-\mathbf{C}\right\rangle_{\mathcal{H}=H^{n}}=\sum_{l=1}^{\infty
}(\widehat{\boldsymbol{\beta }}_{l}-\boldsymbol{\beta
}_{l})^{T}\mathbf{K}_{l}^{T}\mathbf{K}_{l}(\widehat{\boldsymbol{\beta
}}_{l}-\boldsymbol{\beta }_{l}).$ The constant $C(H_{0},\alpha )$
is such that \begin{eqnarray}\mathbb{P}\left\{ S_{H_{0}}(\mathbf{Y})>C(H_{0},\alpha
),\mathbf{K}\boldsymbol{\beta
}=\mathbf{C}\right\}&=&1-\mathbb{P}\left\{ S_{H_{0}}(\mathbf{Y})\leq
C(H_{0},\alpha ),\mathbf{K}\boldsymbol{\beta
}=\mathbf{C}\right\}\nonumber\\
&=& 1-\mathbf{F}_{\alpha }=\alpha ,\nonumber
\end{eqnarray}
 \noindent where the
probability distribution  $\mathbf{F}$ on $\mathcal{H}=H^{n}$ has
characteristic functional given in equation (\ref{chprchlht}) of
Proposition \ref{lhtf}.
\end{theorem}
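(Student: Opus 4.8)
The plan is to reduce the statement to a one–dimensional quantile argument for the nonnegative statistic $S_{H_{0}}(\mathbf{Y})$, whose law under $H_{0}$ is completely determined by Proposition \ref{lhtf}. First I would note that under $H_{0}:\mathbf{K}\boldsymbol{\beta}=\mathbf{C}$ one has $\mathbf{K}\widehat{\boldsymbol{\beta}}-\mathbf{C}=\mathbf{K}(\widehat{\boldsymbol{\beta}}-\boldsymbol{\beta})$, so that
\[
S_{H_{0}}(\mathbf{Y})=\bigl\langle \mathbf{K}(\widehat{\boldsymbol{\beta}}-\boldsymbol{\beta}),\mathbf{K}(\widehat{\boldsymbol{\beta}}-\boldsymbol{\beta})\bigr\rangle_{\mathcal{H}}=\sum_{l=1}^{\infty}(\widehat{\boldsymbol{\beta}}_{l}-\boldsymbol{\beta}_{l})^{T}\mathbf{K}_{l}^{T}\mathbf{K}_{l}(\widehat{\boldsymbol{\beta}}_{l}-\boldsymbol{\beta}_{l}).
\]
By Lemma \ref{lemHT1}, under condition (\ref{dmsc}) the random element $\widehat{\boldsymbol{\beta}}-\boldsymbol{\beta}$ is a zero-mean $H^{p}$-valued Gaussian variable with trace covariance operator $\mathbf{Q}$, $\Phi_{l}^{*}\mathbf{Q}\Phi_{l}=(\mathbf{X}^{T}\boldsymbol{\Lambda}_{l}^{-1}\mathbf{X})^{-1}$. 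Condition (\ref{eqcKt}), together with (\ref{dmsc}) and the trace-class property of $\mathbf{Q}$, shows that $\mathbf{Q}^{1/2}\mathbf{K}^{*}\mathbf{K}\mathbf{Q}^{1/2}$ is a bounded self-adjoint operator of norm strictly less than one which is in the trace class (it equals $(\mathbf{K}\mathbf{Q}^{1/2})(\mathbf{K}\mathbf{Q}^{1/2})^{*}$ with $\mathbf{K}\mathbf{Q}^{1/2}$ Hilbert--Schmidt). Hence $S_{H_{0}}(\mathbf{Y})$ is finite almost surely and, by Proposition \ref{lhtf}, its characteristic functional is the convergent infinite product (\ref{chprchlht}); an analytic-continuation/uniqueness argument exactly as in the proof of Theorem \ref{th1} shows that this functional determines a unique probability law $\mathbf{F}$ on $[0,\infty)$, the null distribution of $S_{H_{0}}(\mathbf{Y})$.

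Next I would establish that $\mathbf{F}$ is continuous, so that an exact-level threshold exists. Letting $\{\xi_{il}:\ i=1,\dots,p\}$ denote the nonnegative eigenvalues of $(\mathbf{X}^{T}\boldsymbol{\Lambda}_{l}^{-1}\mathbf{X})^{-1/2}\mathbf{K}_{l}^{T}\mathbf{K}_{l}(\mathbf{X}^{T}\boldsymbol{\Lambda}_{l}^{-1}\mathbf{X})^{-1/2}$, the factorisation in (\ref{chprchlht}) exhibits $S_{H_{0}}(\mathbf{Y})$ in law as $\sum_{l\geq 1}\sum_{i=1}^{p}\xi_{il}Z_{il}^{2}$ with $\{Z_{il}\}$ i.i.d.\ standard normal, the summability $\sum_{l,i}\xi_{il}<\infty$ being precisely the trace-class property just noted. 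Provided $\mathbf{K}$ acts nondegenerately on the relevant modes (so that at least one $\xi_{il}>0$), this is a nontrivial infinite convex combination of independent $\chi^{2}_{1}$-type variables, hence has an absolutely continuous distribution on $(0,\infty)$; in particular its distribution function is continuous and strictly increasing on its support. Therefore, for every $\alpha\in(0,1)$ there is a (unique) constant $C(H_{0},\alpha)>0$ with $\mathbf{F}\bigl(C(H_{0},\alpha)\bigr)=1-\alpha$, equivalently $\mathbb{P}\{S_{H_{0}}(\mathbf{Y})>C(H_{0},\alpha)\mid \mathbf{K}\boldsymbol{\beta}=\mathbf{C}\}=\alpha$, and $\mathbb{P}\{S_{H_{0}}(\mathbf{Y})=C(H_{0},\alpha)\}=0$.

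Finally I would set $\psi=\mathbf{1}\{S_{H_{0}}(\mathbf{Y})>C(H_{0},\alpha)\}$ and read off $\mathbb{E}_{H_{0}}[\psi]=\mathbb{P}\{S_{H_{0}}(\mathbf{Y})>C(H_{0},\alpha)\mid \mathbf{K}\boldsymbol{\beta}=\mathbf{C}\}=1-\mathbf{F}_{\alpha}=\alpha$, which is the displayed identity and proves the existence claim. I expect the main obstacle to be the continuity/nondegeneracy step: one must rule out atoms of the limit law $\mathbf{F}$ obtained from the infinite-product characteristic functional, so that the size of $\psi$ is exactly $\alpha$ rather than merely $\le\alpha$; this is exactly where the trace-class consequence of (\ref{eqcKt})--(\ref{dmsc}) and the nondegeneracy of $\mathbf{K}$ (ensuring $S_{H_{0}}(\mathbf{Y})$ is a genuinely infinite combination of $\chi^{2}$ variables) are used. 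The a.s.\ finiteness of $S_{H_{0}}(\mathbf{Y})$ and the passage from the characteristic functional to a bona fide law on $\mathbb{R}_{+}$ are then routine, along the lines already used for Theorem \ref{th1}.
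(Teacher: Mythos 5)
Your argument is correct, and it is in fact more complete than what the paper offers: the paper states Theorem \ref{th7} with no proof at all, treating it as an immediate consequence of Proposition \ref{lhtf} (the null law $\mathbf{F}$ is determined by the characteristic functional (\ref{chprchlht}), and $C(H_{0},\alpha)$ is simply declared to be its $(1-\alpha)$-quantile). Your reduction under $H_{0}$ to the zero-mean quadratic form $\langle \mathbf{K}(\widehat{\boldsymbol{\beta}}-\boldsymbol{\beta}),\mathbf{K}(\widehat{\boldsymbol{\beta}}-\boldsymbol{\beta})\rangle$, the use of Lemma \ref{lemHT1} for the covariance $\mathbf{Q}$, and the trace-class bookkeeping via $\mathbf{K}\mathbf{Q}^{1/2}$ being Hilbert--Schmidt are exactly the ingredients the paper relies on implicitly. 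The one place where you go genuinely beyond the paper is the continuity step: the paper asserts $1-\mathbf{F}_{\alpha}=\alpha$ without addressing whether $\mathbf{F}$ has atoms, whereas you correctly observe that the representation $\sum_{l,i}\xi_{il}Z_{il}^{2}$ yields an absolutely continuous law on $(0,\infty)$ only under a nondegeneracy condition on $\mathbf{K}$ (at least one $\xi_{il}>0$), and that this is what guarantees an exact-level rather than conservative test. That hypothesis is not stated in Proposition \ref{lhtf} or Theorem \ref{th7}, so strictly speaking your proof identifies a (mild) assumption the paper leaves tacit; with it, your argument is complete and the paper's claim follows.
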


\section{Final comments}
\label{Sec6}

  Our
approach allows the identification of the functional components of
variance $\widetilde{\mbox{\textbf{SST}}},$
$\widetilde{\mbox{\textbf{SSR}}}$ and
$\widetilde{\mbox{\textbf{SSE}}}$ with  series of independent
finite-dimensional random quadratic forms, respectively constructed
from a sequence of  independent multivariate ($n$-dimensional)
Gaussian random variables (see Theorem \ref{characteristicfunctions}). The elements of such a multivariate
normal sequence respectively define the $n$-dimensional projections
of the infinite-dimensional  multivariate Gaussian measure on
$\mathcal{H}=H^{n},$ underlying to our studied Hilbert-valued
fixed-effect Gaussian model with correlated  error components. New
functional hypothesis tests, in the spirit of classical one way
F-ANOVA test, can be formulated from such identifications, applying
an extended version of the methodology proposed in
\cite{Cuesta-Albertos10} for independent
functional data.

The example given in Section \ref{Exm} can be
applied to different practical si-\linebreak tuations. Specifically,
in the geophysical context, the response can be referred to time
(respectively, to space), i.e., the
 response  takes its values in a separable Hilbert space
of vector functions with temporal support (respectively, with spatial
support). In such a case, equation (\ref{sdeex}) represents the
physical law governing the evolution in time of the
vectorial random source or multivariate innovation process (respectively, describing the
spatial diffusion of the vectorial random source or multivariate spatial innovation), at
each station  (for example, heat transfer  equation at each
meteorological station, under different climatological conditions,
e.g., considering seasonal factors, location factors, etc.),
 in the heterocedastic
and correlated settings   (see, for instance, \cite{Wikle98}, for a
mixed-effect spatiotemporal process formulation applied to
meteorology). In human tactile perception (see, for example,
\cite{Spitzner}), for $i=1,\dots,n,$ operator $f_{i}(\mathcal{L})$
can define the movement equation describing  the  path of the random
stimulus applied to  the $i$th subject, under different experimental
conditions, which can affect the perception of the subject.  Here,
the applied random stimuli interact or are correlated between
different individuals. Other possible fields of application of the
proposed Hilbert-valued Gaussian fixed effect model with functional
correlated noise can be found in the statistical analysis of
functional magnetic resonance imaging data (see, for example,
\cite{Kang12}, and the references therein), disease mapping (see,
for example, \cite{Ruiz_Medina13}), and in  spatiotemporal environmental processes (see
\cite{Wikle03}).

Alternatively, the semi-parametric class of covariance matrix
operators introduced in (\ref{covopsd}) can be reformulated in a
more flexible way,  by considering an extended definition  of the
matrix sequence $\{\boldsymbol{\Lambda }_{k},\ k\geq 1\},$ given in
(\ref{eqprocovop}). Specifically, in (\ref{eqprocovop}), a separable
correlation structure between random variables $\{\eta_{ki},\ k\geq
1\}$ and $\{\eta_{kj},\ k\geq 1\},$ for $i\neq j,$ and $i,j\in
\{1,\dots,n\}$ is considered, ensuring, from equations
(\ref{SI})--(\ref{tracecvo}),  that
$\mbox{trace}\left(\sum_{k=1}^{\infty
}\boldsymbol{\Lambda}_{k}\right)<\infty.$ In a more general
framework, we can consider, for $i\neq j,$
\begin{equation}E[\eta_{ki}\eta_{pj}]=\delta_{k,p}f(\lambda_{ki},\lambda_{kj},i,j),\label{eqffunct0}
\end{equation}
\noindent with $f$ being a  function such that the matrix
$\boldsymbol{\Lambda }_{k}$ is of full rank, for every $k\geq 1,$
and condition  (\ref{tracecvo}) is satisfied.

Extensions of the formulated results to the framework of
Hilbert-valued fixed effect models with autoregressive correlated
error components can be obtained from the presented methodology and the results derived in \cite{Bosq} and
\cite{BosqBlanke}, in the temporal autoregressive case, as well as
the results given  in \cite{Ruiz_Medina11} and \cite{Ruiz_Medina12},
in the spatial autoregressive case. However, in the $H$-valued multivariate time series
framework, the assumption on the existence of a common resolution of the identity could exclude some
interesting cases.  Hence, further research is required to obtain a more flexible setting of assumptions.
\newpage
\noindent {\bfseries {\large Acknowledgments}}

This work has been supported in part by project MTM2012-32674 (cofunded with FEDER) of the
DGI, MEC,  Spain


\section*{Appendix}
In the following, Appendices A--C provide the proof of the almost surely
finiteness of $\widetilde{\mbox{\textbf{SST}}},$
$\widetilde{\mbox{\textbf{SSR}}}$ and
$\widetilde{\mbox{\textbf{SSE}}}.$ In addition, Appendix D gives the proof of Theorem \ref{characteristicfunctions}, and Appendix E shows
an example of the Hilbert space structures that can be considered under {\bfseries Assumption A0}.
\bigskip

\subsection*{\bfseries Appendix A. Almost surely finiteness of $\widetilde{\mbox{\textbf{SST}}}$}

\medskip

\subsubsection*{Proof of Proposition \ref{SST}}

 \textcolor{blue}{Let us denote $\mathcal{T}(\mathbf{W}_{k},\boldsymbol{\Lambda}_{k})= \mbox{trace}\left(\mathbf{W}_{k}^{T}\boldsymbol{\Lambda}_{k}^{-1}\mathbf{W}_{k}\right)$
 and $\mathcal{T}(\boldsymbol{\Lambda}_{k})=\mbox{trace}\left(
\boldsymbol{\Lambda}_{k}\right).$ Also,  the notation $\| \mathbf{X}\boldsymbol{\beta
} \|^{2}_{\mathbf{W}R^{-1}_{\boldsymbol{\varepsilon}\boldsymbol{\varepsilon}}\mathbf{W}}=\sum_{k=1}^{\infty }\boldsymbol{\beta
}^{T}_{k}\mathbf{X}^{T}\mathbf{W}_{k}^{T}\boldsymbol{\Lambda}_{k}^{-1}\mathbf{W}_{k}\mathbf{X}\boldsymbol{\beta
}_{k}$ will be used.}
\textcolor{blue}{
 Applying Cauchy–Schwarz
inequality, and Parselval identity, under (\ref{c1asSST}) (respectively, (\ref{c2asSST})), we
obtain
\begin{eqnarray}
& & \hspace*{-0.75cm} E[\widetilde{\mbox{\textbf{SST}}}] =\sum_{k=1}^{\infty }
\mbox{trace}\left(\mathbf{W}_{k}^{T}\boldsymbol{\Lambda}_{k}^{-1}\mathbf{W}_{k}\boldsymbol{\Lambda}_{k}\right)+\| \mathbf{X}\boldsymbol{\beta
} \|^{2}_{\mathbf{W}R^{-1}_{\boldsymbol{\varepsilon}\boldsymbol{\varepsilon}}\mathbf{W}}\leq \sum_{k=1}^{\infty }
\mathcal{T}(\mathbf{W}_{k},\boldsymbol{\Lambda}_{k})\mathcal{T}(\boldsymbol{\Lambda}_{k})\nonumber\\
& & \hspace*{-0.75cm}+\| \mathbf{X}\boldsymbol{\beta
} \|^{2}_{\mathbf{W}R^{-1}_{\boldsymbol{\varepsilon}\boldsymbol{\varepsilon}}\mathbf{W}}\leq \sqrt{\sum_{k=1}^{\infty } \left[\mathcal{T}(\boldsymbol{\Lambda}_{k})\right]^{2}}\sqrt{\sum_{k=1}^{\infty
}\left[\mathcal{T}\left(\mathbf{W}_{k},\boldsymbol{\Lambda}_{k}\right)\right]^{2}}+\| \mathbf{X}\boldsymbol{\beta
} \|^{2}_{\mathbf{W}R^{-1}_{\boldsymbol{\varepsilon}\boldsymbol{\varepsilon}}\mathbf{W}}
\nonumber\\
& &\hspace*{-0.75cm}
\leq \sqrt{\sum_{k=1}^{\infty } \left[\mathcal{T}(\boldsymbol{\Lambda}_{k})\right]^{2}} \sqrt{\sum_{k=1}^{\infty
}\left[\mathcal{T}(\mathbf{W}_{k},\boldsymbol{\Lambda}_{k})\right]^{2}} +  \sum_{k=1}^{\infty
}[\mathcal{T}(\mathbf{W}_{k},\boldsymbol{\Lambda}_{k})]^{2}
 \boldsymbol{\beta
}^{T}_{k}\mathbf{X}^{T}\mathbf{X}\boldsymbol{\beta
}_{k}
\nonumber\\
&\leq & \sqrt{\sum_{k=1}^{\infty }
\left[\mathcal{T}(\boldsymbol{\Lambda}_{k})\right]^{2}\sum_{k=1}^{\infty
}\left[\mathcal{T}(\mathbf{W}_{k},\boldsymbol{\Lambda}_{k})\right]^{2}}+ \sqrt{\sum_{k=1}^{\infty
}[\mathcal{T}(\mathbf{W}_{k},\boldsymbol{\Lambda}_{k})]^{4}
\sum_{k=1}^{\infty
}\left[\boldsymbol{\beta
}^{T}_{k}\mathbf{X}^{T}\mathbf{X}\boldsymbol{\beta
}_{k}\right]^{2}},
\nonumber
\end{eqnarray}
}

\noindent \textcolor{blue}{which finite, since, under (\ref{c1asSST}) (respectively, (\ref{c2asSST})), $\sum_{k=1}^{\infty}[\mathcal{T}(\mathbf{W}_{k},\boldsymbol{\Lambda}_{k})]^{4} \leq
C\sum_{k=1}^{\infty}\mathcal{T}(\mathbf{W}_{k},\boldsymbol{\Lambda}_{k})<\infty ,$
 and since $\mathbf{X}\boldsymbol{\beta
}\in \mathcal{H}=H^{n},$ then $\sum_{k=1}^{\infty
}\left[\boldsymbol{\beta
}^{T}_{k}\mathbf{X}^{T}\mathbf{X}\boldsymbol{\beta
}_{k}\right]^{2}\leq \widetilde{C}\|\mathbf{X}\boldsymbol{\beta
}\|_{\mathcal{H}=H^{n}}^{2}=\widetilde{C}\sum_{k=1}^{\infty}\boldsymbol{\beta
}^{T}_{k}\mathbf{X}^{T}\mathbf{X}\boldsymbol{\beta
}_{k}<\infty ,$ for certain positive constants $C$ and $\widetilde{C}.$}

\subsection*{\bfseries Appendix B. Almost surely finiteness of
$\widetilde{\mbox{\textbf{SSR}}}$}

\subsubsection*{Proof of Proposition \ref{SSR}}

\textcolor{blue}{Let us compute
\begin{eqnarray} && \hspace*{-0.75cm} E[\widetilde{\mbox{\textbf{SSR}}}]=\sum_{k=1}^{\infty
}E\left[\mathbf{Y}_{k}^{T}[\mathbf{W}_{k}^{T}\boldsymbol{\Lambda}_{k}^{-1}\mathbf{W}_{k}-\mathbf{W}_{k}^{T}\boldsymbol{\mathcal{M}}_{k}^{T}
\boldsymbol{\Lambda}_{k}^{-1}
\boldsymbol{\mathcal{M}}_{k}\mathbf{W}_{k}]\mathbf{Y}_{k}\right]=E\left[\widetilde{\mbox{\textbf{SST}}}\right]\nonumber\\
& &\hspace*{-0.75cm}-E\left[\|\mathbf{Y}\|^{2}_{\mathbf{W}\boldsymbol{\mathcal{M}}
R^{-1}_{\boldsymbol{\varepsilon}\boldsymbol{\varepsilon}}\boldsymbol{\mathcal{M}}\mathbf{W}}
\right]=E\left[\widetilde{\mbox{\textbf{SST}}}\right]-\mathcal{T}\left(\mathbf{W}\boldsymbol{\mathcal{M}}^{T}R^{-1}_{\boldsymbol{\varepsilon}
\boldsymbol{\varepsilon}}\boldsymbol{\mathcal{M}}\mathbf{W}
R_{\boldsymbol{\varepsilon}\boldsymbol{\varepsilon}}\right)-\|\mathbf{X}\boldsymbol{\beta}\|^{2}_{\mathbf{W}
\boldsymbol{\mathcal{M}}^{T}R^{-1}_{\boldsymbol{\varepsilon}\boldsymbol{\varepsilon}}\boldsymbol{\mathcal{M}}\mathbf{W}}
\nonumber\\
&&\hspace*{-0.75cm}
 =E\left[\widetilde{\mbox{\textbf{SST}}}\right]-\mathcal{T}\left(\mathbf{W}, \boldsymbol{\Lambda
}^{-1},\boldsymbol{\Lambda
}\right)
+\mathcal{T}\left(\mathbf{W},\mathbf{X}, \boldsymbol{\Lambda
}^{-1},\boldsymbol{\Lambda
}\right)-\|\mathbf{X}\boldsymbol{\beta}\|^{2}_{\mathbf{W}\boldsymbol{\mathcal{M}}^{T}R^{-1}_{\boldsymbol{\varepsilon}\boldsymbol{\varepsilon}}
\boldsymbol{\mathcal{M}}\mathbf{W}}.
 \label{ssr22b}
\end{eqnarray}}
\noindent \textcolor{blue}{where $E\left[\|\mathbf{Y}\|^{2}_{\mathbf{W}\boldsymbol{\mathcal{M}}R^{-1}_{\boldsymbol{\varepsilon}\boldsymbol{\varepsilon}}\boldsymbol{\mathcal{M}}\mathbf{W}}
\right]=\sum_{k=1}^{\infty
}E\left[
\mathbf{Y}_{k}^{T}\mathbf{W}_{k}^{T}\boldsymbol{\mathcal{M}}_{k}^{T}\boldsymbol{\Lambda}_{k}^{-1}
\boldsymbol{\mathcal{M}}_{k}\mathbf{W}_{k}\mathbf{Y}_{k}\right],$  \\ $\mathcal{T}\left(\mathbf{W}\boldsymbol{\mathcal{M}}^{T}R^{-1}_{\boldsymbol{\varepsilon}\boldsymbol{\varepsilon}}\boldsymbol{\mathcal{M}}\mathbf{W}
R_{\boldsymbol{\varepsilon}\boldsymbol{\varepsilon}}\right)=\sum_{k=1}^{\infty
}\mbox{trace}\left(\mathbf{W}_{k}^{T}\boldsymbol{\mathcal{M}}_{k}^{T}\boldsymbol{\Lambda}_{k}^{-1}
\boldsymbol{\mathcal{M}}_{k}\mathbf{W}_{k}\boldsymbol{\Lambda}_{k}\right),$  \linebreak $\|\mathbf{X}\boldsymbol{\beta}\|^{2}_{\mathbf{W}\boldsymbol{\mathcal{M}}^{T}R^{-1}_{\boldsymbol{\varepsilon}\boldsymbol{\varepsilon}}
\boldsymbol{\mathcal{M}}\mathbf{W}}
=\sum_{k=1}^{\infty }
\boldsymbol{\beta}_{k}\mathbf{X}^{T}\mathbf{W}_{k}^{T}\boldsymbol{\mathcal{M}}_{k}^{T}\boldsymbol{\Lambda}_{k}^{-1}
\boldsymbol{\mathcal{M}}_{k}\mathbf{W}_{k}\mathbf{X}\boldsymbol{\beta}_{k},$ $\mathcal{T}\left(\mathbf{W}, \boldsymbol{\Lambda
}^{-1},\boldsymbol{\Lambda
}\right)=\sum_{k=1}^{\infty
}\mbox{trace}\left(\mathbf{W}_{k}^{T}\boldsymbol{\Lambda
}_{k}^{-1}\mathbf{W}_{k}\boldsymbol{\Lambda }_{k}\right),$ and $\mathcal{T}\left(\mathbf{W}, \mathbf{X},\boldsymbol{\Lambda
}^{-1},\boldsymbol{\Lambda
}\right)=$\\$\sum_{k=1}^{\infty
}\mbox{trace}\left(\mathbf{W}_{k}^{T}\boldsymbol{\Lambda
}_{k}^{-1}\mathbf{X}(\mathbf{X}^{T}\boldsymbol{\Lambda
}_{k}^{-1}\mathbf{X})^{-1}\mathbf{X}^{T}\boldsymbol{\Lambda
}_{k}^{-1}\mathbf{W}_{k}\boldsymbol{\Lambda
}_{k}\right)$}

 Note that, under condition
(\ref{c1asSST2}), since $\mathbf{W}_{k}=\mathbf{W}_{k}^{T},$
$\sum_{k=1}^{\infty
}\mbox{trace}\left(\mathbf{W}_{k}^{1/2}\boldsymbol{\Lambda
}_{k}^{-1}\mathbf{W}_{k}^{1/2}\right)$ is finite.
Moreover, applying again well-known properties of the trace of the
product of real and symmetric  positive semi-definite  matrices
(see, for example, \cite{FangY}), and Cauchy-Schwarz inequality, we
obtain
\textcolor{blue}{\begin{eqnarray}& & \mathcal{T}\left(\mathbf{W}, \mathbf{X},\boldsymbol{\Lambda
}^{-1},\boldsymbol{\Lambda
}\right) \leq \widetilde{\mathcal{T}}\left(\mathbf{W}, \mathbf{X},\boldsymbol{\Lambda
}^{-1},\boldsymbol{\Lambda
}\right)
\nonumber\\
& & \leq \sum_{k=1}^{\infty }\mbox{trace}\left(
(\mathbf{X}^{T}\boldsymbol{\Lambda
}_{k}^{-1}\mathbf{X})^{-1}\right)\mbox{trace}\left(\mathbf{X}^{T}\boldsymbol{\Lambda
}_{k}^{-1}\mathbf{W}_{k}\mathbf{W}_{k}^{T}\boldsymbol{\Lambda
}_{k}^{-1}\mathbf{X}\right)\mbox{trace}\left(\boldsymbol{\Lambda
}_{k}\right)\nonumber\\
 & &\leq \sum_{k=1}^{\infty }\mbox{trace}\left(
(\mathbf{X}^{T}\boldsymbol{\Lambda }_{k}^{-1}\mathbf{X})^{-1}\right)
[\mbox{trace}\left(\boldsymbol{\Lambda
}_{k}^{-1}\mathbf{W}_{k}\right)]^{2}\mathcal{T}\left(\mathbf{X}\mathbf{X}^{T}\right)\mathcal{T}\left(\boldsymbol{\Lambda }_{k}\right)\nonumber\\
& &\leq
\mathcal{T}\left(\mathbf{X}\mathbf{X}^{T}\right)\sqrt{\sum_{k=1}^{\infty
}\left[\mbox{trace}\left(\boldsymbol{\Lambda
}_{k}\right)\mbox{trace}\left( (\mathbf{X}^{T}\boldsymbol{\Lambda
}_{k}^{-1}\mathbf{X})^{-1}\right)\right]^{2}\sum_{k=1}^{\infty
}\left[\mbox{trace}\left(\mathbf{W}_{k}^{1/2}\boldsymbol{\Lambda
}_{k}^{-1}\mathbf{W}_{k}^{1/2}\right)\right]^{4}}\nonumber\\
& &\leq
\mathcal{K}\mathcal{T}\left(\mathbf{X}\mathbf{X}^{T}\right)\sqrt{\sum_{k=1}^{\infty
}\mathcal{T}\left(\boldsymbol{\Lambda
}_{k}\right)\mbox{trace}\left( (\mathbf{X}^{T}\boldsymbol{\Lambda
}_{k}^{-1}\mathbf{X})^{-1}\right)\mathcal{T}\left(\mathbf{W}^{1/2},\boldsymbol{\Lambda
}^{-1}\right)}  \label{ssr22c}
\end{eqnarray}}
\textcolor{blue}{\begin{eqnarray}
& &\leq
\mathcal{K}
\mathcal{T}\left(\mathbf{X}\mathbf{X}^{T}\right)\left[\sum_{k=1}^{\infty
}\left[\mathcal{T}\left(\boldsymbol{\Lambda
}_{k}\right)\right]^{2}\mathcal{T}\left(\mathbf{X},\boldsymbol{\Lambda}^{-1}\right)\right]^{1/4} \sqrt{\mathcal{T}\left(\mathbf{W}^{1/2},\boldsymbol{\Lambda
}^{-1}\right)}<\infty,\nonumber
\end{eqnarray}}
\noindent    under conditions (\ref{dmsc}) and
(\ref{c1asSST2}),
 where \textcolor{blue}{$\mathcal{T}\left(\mathbf{W}, \mathbf{X},\boldsymbol{\Lambda
}^{-1},\boldsymbol{\Lambda
}\right)$\\ $=\sum_{k=1}^{\infty
}\mbox{trace}\left(\mathbf{W}_{k}^{T}\boldsymbol{\Lambda
}_{k}^{-1}\mathbf{X}(\mathbf{X}^{T}\boldsymbol{\Lambda
}_{k}^{-1}\mathbf{X})^{-1}\mathbf{X}^{T}\boldsymbol{\Lambda
}_{k}^{-1}\mathbf{W}_{k}\boldsymbol{\Lambda }_{k}\right),$
$\widetilde{\mathcal{T}}\left(\mathbf{W}, \mathbf{X},\boldsymbol{\Lambda
}^{-1},\boldsymbol{\Lambda
}\right)=
\sum_{k=1}^{\infty
}\mbox{trace}\left(\mathbf{W}_{k}^{T}\boldsymbol{\Lambda
}_{k}^{-1}\mathbf{X}(\mathbf{X}^{T}\boldsymbol{\Lambda
}_{k}^{-1}\mathbf{X})^{-1}\mathbf{X}^{T}\boldsymbol{\Lambda
}_{k}^{-1}\mathbf{W}_{k}\right)\mbox{trace}\left(\boldsymbol{\Lambda }_{k}\right),$\\
$\mathcal{T}\left(\mathbf{X}\mathbf{X}^{T}\right)=\mbox{trace}\left(\mathbf{X}\mathbf{X}^{T}\right),$
$\mathcal{T}\left(\mathbf{X},\boldsymbol{\Lambda}^{-1}\right)=\sum_{k=1}^{\infty
}\left[\mbox{trace}\left( (\mathbf{X}^{T}\boldsymbol{\Lambda
}_{k}^{-1}\mathbf{X})^{-1}\right)\right]^{2},$ \\ and $\mathcal{T}\left(\mathbf{W}^{1/2},\boldsymbol{\Lambda
}^{-1}\right)=\sum_{k=1}^{\infty
}\mbox{trace}\left(\mathbf{W}_{k}^{1/2}\boldsymbol{\Lambda
}_{k}^{-1}\mathbf{W}_{k}^{1/2}\right).$
}
In addition,
\textcolor{blue}{
\begin{eqnarray}& &
\sum_{k=1}^{\infty
}\boldsymbol{\beta}_{k}\mathbf{X}^{T}\mathbf{W}_{k}^{T}\boldsymbol{\mathcal{M}}_{k}^{T}\boldsymbol{\Lambda}_{k}^{-1}
\boldsymbol{\mathcal{M}}_{k}\mathbf{W}_{k}\mathbf{X}\boldsymbol{\beta}_{k}\leq \sum_{k=1}^{\infty
}\boldsymbol{\beta}_{k}\mathbf{X}^{T}\mathbf{W}_{k}^{T}\boldsymbol{\Lambda}_{k}^{-1}
\mathbf{W}_{k}\mathbf{X}\boldsymbol{\beta}_{k}\nonumber\\
& & \leq \sum_{k=1}^{\infty
}\left[\mbox{trace}\left(\mathbf{W}_{k}^{T}\boldsymbol{\Lambda}_{k}^{-1}\mathbf{W}_{k}\right)\right]^{2}
\boldsymbol{\beta}_{k}\mathbf{X}^{T}\mathbf{X}\boldsymbol{\beta}_{k} \leq \sqrt{\sum_{k=1}^{\infty
}\left[\mbox{trace}\left(\mathbf{W}_{k}^{T}\boldsymbol{\Lambda}_{k}^{-1}\mathbf{W}_{k}\right)\right]^{4}}
\|\mathbf{X}\boldsymbol{\beta}\|_{\mathcal{H}}\nonumber\\
& & \leq \widetilde{\mathcal{K}}\sqrt{\sum_{k=1}^{\infty
}\mbox{trace}\left(\mathbf{W}_{k}^{T}\boldsymbol{\Lambda}_{k}^{-1}\mathbf{W}_{k}\right)}
\|\mathbf{X}\boldsymbol{\beta}\|_{\mathcal{H}}<\infty,
 \label{ssr22d}
\end{eqnarray}}
\noindent since $\mathbf{X}\boldsymbol{\beta}\in \mathcal{H}=H^{n},$
and $\sum_{k=1}^{\infty
}\mbox{trace}\left(\mathbf{W}_{k}^{T}\boldsymbol{\Lambda}_{k}^{-1}\mathbf{W}_{k}\right)<\infty,$
under condition (\ref{c1asSST2}).  From equations
(\ref{ssr22b})--(\ref{ssr22d}), keeping in mind that,
 $E\left[\widetilde{\mbox{\textbf{SST}}}\right]<\infty,$ as given in Appendix A, we have
 $E[\widetilde{\mbox{\textbf{SSR}}}]<\infty .$ Thus,
 $\widetilde{\mbox{\textbf{SSR}}}$ is a.s. finite as we wanted to
 prove.

\subsection*{\bfseries Appendix C. Almost surely finiteness of
$\widetilde{\mbox{\textbf{SSE}}}$}

The a.s. finiteness of $\widetilde{\mbox{\textbf{SSE}}}$ follows
straightforward from Appendix B,
 since
\begin{eqnarray}
& & E[\widetilde{\mbox{\textbf{SSE}}}]= \sum_{k=1}^{\infty }E\left[
\mathbf{Y}_{k}^{T}\mathbf{W}_{k}^{T}\boldsymbol{\mathcal{M}}_{k}^{T}\boldsymbol{\Lambda}_{k}^{-1}
\boldsymbol{\mathcal{M}}_{k}\mathbf{W}_{k}\mathbf{Y}_{k}\right]<\infty,
\end{eqnarray}
\noindent as given in  equations (\ref{ssr22b})--(\ref{ssr22d}).
\subsection*{\bfseries Appendix D. Proof of Theorem \ref{characteristicfunctions}}

The proof of Theorem \ref{characteristicfunctions} follows from
Lemma \ref{lem1}. We now provide the main steps involved in the
  derivation of the characteristic functional
of the components of variance for the transformed functional data
model.

\begin{itemize}
\item[] (i)
From Lemma \ref{lem1}, considering as $H^{n}$-valued  zero-mean
Gaussian random variable,   $\mathbf{Y}-\mathbf{X}\boldsymbol{\beta
},$ and as matrix operator $\mathbf{M}=2i\omega
\mathbf{W}^{*}\mathbf{R}_{\boldsymbol{\varepsilon}\boldsymbol{\varepsilon}}^{-1}\mathbf{W},$
keeping in mind that
$\Phi_{k}^{*}(\mathbf{Y}-\mathbf{X}\boldsymbol{\beta
})=\mathbf{Y}_{k}-\mathbf{X}\boldsymbol{\beta }_{k},$ and
$\Phi_{k}^{*}\mathbf{W}^{*}\mathbf{R}_{\boldsymbol{\varepsilon}\boldsymbol{\varepsilon}}^{-1}\mathbf{W}\Phi_{k}=\mathbf{W}_{k}^{T}\boldsymbol{\Lambda}_{k}^{-1}\mathbf{W}_{k},$
for each $k\geq 1,$ we obtain, for   $\omega
<B_{\widetilde{\mbox{\textbf{SST}}}},$ with
$B_{\widetilde{\mbox{\textbf{SST}}}}$ given in equation (\ref{osst})
below,

\begin{eqnarray}& & E\left[\exp\left(i\omega \sum_{k=1}^{\infty}(\mathbf{Y}_{k}-\mathbf{X}\boldsymbol{\beta }_{k})^{T}
\mathbf{W}_{k}^{T}\boldsymbol{\Lambda}_{k}^{-1}\mathbf{W}_{k}
(\mathbf{Y}_{k}-\mathbf{X}\boldsymbol{\beta }_{k})\right)\right]
\nonumber\\ & &
=\prod_{k=1}^{\infty}\left[\mbox{det}\left(\mathbf{I}_{n\times
n}-2i\omega
\boldsymbol{\Lambda}_{k}^{1/2}\mathbf{W}_{k}^{T}\boldsymbol{\Lambda}_{k}^{-1}\mathbf{W}_{k}\boldsymbol{\Lambda}_{k}^{1/2}\right)\right]^{-1/2}.
\nonumber\\
\label{eqfc1ff}
\end{eqnarray}

\noindent Hence, for $$\|\mathbf{Y}-\mathbf{X}\boldsymbol{\beta }+\mathbf{X}\boldsymbol{\beta }\|^{2}_{\mathbf{W}^{T}\boldsymbol{\Lambda}^{-1}\mathbf{W}}=\sum_{k=1}^{\infty}(\mathbf{Y}_{k}-\mathbf{X}\boldsymbol{\beta
}_{k}+\mathbf{X}\boldsymbol{\beta
}_{k})^{T}
\mathbf{W}_{k}^{T}\boldsymbol{\Lambda}_{k}^{-1}\mathbf{W}_{k}
(\mathbf{Y}_{k}-\mathbf{X}\boldsymbol{\beta }_{k}+\mathbf{X}\boldsymbol{\beta }_{k}),$$ we have
 \begin{eqnarray}& & \hspace*{-0.75cm}F_{\widetilde{\mbox{\textbf{SST}}}}(i\omega )=E\left[\exp\left(i\omega\widetilde{\mbox{\textbf{SST}}}\right)\right]= E\left[\exp\left(i\omega
\|\mathbf{Y}-\mathbf{X}\boldsymbol{\beta }+\mathbf{X}\boldsymbol{\beta }\|^{2}_{\mathbf{W}^{T}\boldsymbol{\Lambda}^{-1}\mathbf{W}}\right)\right]
\nonumber\\
& &\hspace*{-0.75cm}=E\left[\exp\left(i\omega \left\langle
\mathbf{W}^{*}\mathbf{R}_{\boldsymbol{\varepsilon}\boldsymbol{\varepsilon}}^{-1}\mathbf{W}(\mathbf{Y}-\mathbf{X}\boldsymbol{\beta
}),(\mathbf{Y}-\mathbf{X}\boldsymbol{\beta
})\right\rangle_{H^{n}
=\mathcal{H}}\right.\right.\nonumber\\
& &\hspace*{-0.75cm}\left.\left.+2i\omega \left\langle
\mathbf{W}^{*}\mathbf{R}_{\boldsymbol{\varepsilon}\boldsymbol{\varepsilon}}^{-1}\mathbf{W}\mathbf{X}\boldsymbol{\beta
},(\mathbf{Y}-\mathbf{X}\boldsymbol{\beta
})\right\rangle_{H^{n}=\mathcal{H}}\right)\right]\exp\left(
i\omega\sum_{k=1}^{\infty}\boldsymbol{\beta
}_{k}^{T}\mathbf{X}^{T}\mathbf{W}_{k}^{T}\boldsymbol{\Lambda}_{k}^{-1}\mathbf{W}_{k}\mathbf{X}\boldsymbol{\beta
}_{k} \right).\nonumber\\ \label{eqfc1ffv2}
\end{eqnarray}
\noindent  Applying  again Lemma \ref{lem1} with
$\mathbf{Y}-\mathbf{X}\boldsymbol{\beta }$ as $H^{n}$-valued
zero-mean Gaussian vector, and, as before, $\mathbf{M}=2i\omega
\mathbf{W}^{*}\mathbf{R}_{\boldsymbol{\varepsilon}\boldsymbol{\varepsilon}}^{-1}\mathbf{W},$
and with \linebreak $\mathbf{b}=2i\omega
\mathbf{W}^{*}\mathbf{R}_{\boldsymbol{\varepsilon}\boldsymbol{\varepsilon}}^{-1}\mathbf{W}\mathbf{X}\boldsymbol{\beta
},$ keeping in mind that
$\Phi_{k}^{*}\mathbf{W}\Phi_{k}=\mathbf{W}_{k},$
$\Phi_{k}^{*}\mathbf{R}_{\boldsymbol{\varepsilon}\boldsymbol{\varepsilon}}^{-1}\Phi_{k}=\boldsymbol{\Lambda}_{k}^{-1}$
and $\Phi_{k}^{*}(\boldsymbol{\beta })=\boldsymbol{\beta }_{k},$ for
each $k\geq 1,$  we  obtain from (\ref{eqfc1ff}) and
(\ref{eqfc1ffv2}),
\begin{eqnarray}
& & F_{\widetilde{\mbox{\textbf{SST}}}}(i\omega )=
E\left[\exp\left(i\omega\widetilde{\mbox{\textbf{SST}}}\right)\right]=\nonumber\\
& & =\prod_{k=1}^{\infty}\left[\mbox{det}\left(\mathbf{I}_{n\times
n}-2i\omega
\boldsymbol{\Lambda}_{k}^{1/2}\mathbf{W}_{k}^{T}\boldsymbol{\Lambda}_{k}^{-1}\mathbf{W}_{k}\boldsymbol{\Lambda}_{k}^{1/2}\right)\right]^{-1/2}
\nonumber\\
& &\times \exp\left(-4\omega^{2}\sum_{k=1}^{\infty}\boldsymbol{\beta
}_{k}^{T}\mathbf{X}^{T}\mathbf{W}_{k}^{T}\boldsymbol{\Lambda}_{k}^{-1}\mathbf{W}_{k}\boldsymbol{\Lambda}_{k}^{1/2}\right.\nonumber\\
& & \hspace*{3.5cm}\left. \times \left(\mathbf{I}_{n\times
n}-2i\omega
\boldsymbol{\Lambda}_{k}^{1/2}\mathbf{W}_{k}^{T}\boldsymbol{\Lambda}_{k}^{-1}\mathbf{W}_{k}\boldsymbol{\Lambda}_{k}^{1/2}\right)^{-1}
\right.\nonumber\\
& &\hspace*{3.5cm}\left.\times
\boldsymbol{\Lambda}_{k}^{1/2}\mathbf{W}_{k}^{T}\boldsymbol{\Lambda}_{k}^{-1}\mathbf{W}_{k}\mathbf{X}\boldsymbol{\beta
}_{k}\right)\nonumber\\
& &\times \exp\left( i\omega\sum_{k=1}^{\infty}\boldsymbol{\beta
}_{k}^{T}\mathbf{X}^{T}\mathbf{W}_{k}^{T}\boldsymbol{\Lambda}_{k}^{-1}\mathbf{W}_{k}\mathbf{X}\boldsymbol{\beta
}_{k} \right)< \infty,\nonumber\\
\end{eqnarray}
\noindent for $\omega < \min
\{B_{\widetilde{\mbox{\textbf{SST}}}},(1/2)\mathrm{IT}_{\widetilde{\mbox{\textbf{SST}}}}\},$
with $\mathrm{IT}_{\widetilde{\mbox{\textbf{SST}}}}$ being defined
as in (\ref{minsst}), and
\begin{equation}B_{\widetilde{\mbox{\textbf{SST}}}}=\frac{1}{2\max_{k,i}\xi_{i}\left(\boldsymbol{\Lambda}_{k}^{1/2}\mathbf{W}_{k}^{T}\boldsymbol{\Lambda}_{k}^{-1}
\mathbf{W}_{k}\boldsymbol{\Lambda}_{k}^{1/2}
\right)},\label{osst}\end{equation} \noindent where, as before,
$$\xi_{i}\left(\boldsymbol{\Lambda}_{k}^{1/2}\mathbf{W}_{k}^{T}\boldsymbol{\Lambda}_{k}^{-1}
\mathbf{W}_{k}\boldsymbol{\Lambda}_{k}^{1/2} \right)$$ \noindent
denotes the $i$th eigenvalue of matrix
$\boldsymbol{\Lambda}_{k}^{1/2}\mathbf{W}_{k}^{T}\boldsymbol{\Lambda}_{k}^{-1}
\mathbf{W}_{k}\boldsymbol{\Lambda}_{k}^{1/2},$ for $i=1,\dots,n,$
and for each $k\geq 1.$ An analytic continuation argument (see \cite{Lukacs}, Th. 7.1.1)
guarantees that $F_{\widetilde{\mbox{\textbf{SST}}}}(i\omega )$
defines the unique limit characteristic functional  for all  values
of $\omega.$
\item[] (ii) Similarly, from Lemma \ref{lem1},  for suitable   $\omega $ (see equation (\ref{scomega}) below), we have
\begin{eqnarray}& & E\left[\exp\left(i\omega \sum_{k=1}^{\infty}(\mathbf{Y}_{k}-\mathbf{X}\boldsymbol{\beta }_{k})^{T}
\mathbf{W}_{k}^{T}\boldsymbol{\Lambda
}_{k}^{-1}\mathbf{X}\right.\right.\nonumber\\
& & \hspace*{3cm}\left.\left. \times
(\mathbf{X}^{T}\boldsymbol{\Lambda
}_{k}^{-1}\mathbf{X})^{-1}\mathbf{X}^{T}\boldsymbol{\Lambda
}_{k}^{-1}\mathbf{W}_{k}(\mathbf{Y}_{k}-\mathbf{X}\boldsymbol{\beta
}_{k})\right)\right] \nonumber\\ & &
=\prod_{k=1}^{\infty}\left[\mbox{det}\left(\mathbf{I}_{n\times
n}-2i\omega
\boldsymbol{\Lambda}_{k}^{1/2}\mathbf{W}_{k}^{T}\boldsymbol{\Lambda
}_{k}^{-1}\mathbf{X}\right.\right.\nonumber\\ & &
\hspace*{3cm}\left.\left.\times (\mathbf{X}^{T}\boldsymbol{\Lambda
}_{k}^{-1}\mathbf{X})^{-1}\mathbf{X}^{T}\boldsymbol{\Lambda
}_{k}^{-1}\mathbf{W}_{k}\boldsymbol{\Lambda}_{k}^{1/2}\right)\right]^{-1/2}.
\nonumber\\
\label{eqfc1ffii}
\end{eqnarray}
Applying again  Lemma \ref{lem1} with
$$\mathbf{M}=2i\omega \mathbf{W}^{*}\mathbf{R}_{\boldsymbol{\varepsilon}\boldsymbol{\varepsilon}}^{-1}\mathbf{X}(\mathbf{X}^{T}\mathbf{R}_{\boldsymbol{\varepsilon}\boldsymbol{\varepsilon}}^{-1}\mathbf{X})^{-1}
\mathbf{X}^{T}\mathbf{R}_{\boldsymbol{\varepsilon}\boldsymbol{\varepsilon}}^{-1}\mathbf{W},$$\noindent
and with
$$\mathbf{b}=2i\omega \mathbf{W}^{*}\mathbf{R}_{\boldsymbol{\varepsilon}\boldsymbol{\varepsilon}}^{-1}\mathbf{X}(\mathbf{X}^{T}\mathbf{R}_{\boldsymbol{\varepsilon}\boldsymbol{\varepsilon}}^{-1}\mathbf{X})^{-1}
\mathbf{X}^{T}\mathbf{R}_{\boldsymbol{\varepsilon}\boldsymbol{\varepsilon}}^{-1}\mathbf{W}\mathbf{X}\boldsymbol{\beta
},$$ \noindent one can get,
\begin{eqnarray}& &
F_{\widetilde{\mbox{\textbf{SSR}}}}(i\omega
)=E\left[\exp\left(i\omega
\widetilde{\mbox{\textbf{SSR}}}\right)\right]\nonumber\\
& &  =\prod_{k=1}^{\infty}\left[\mbox{det}\left(\mathbf{I}_{n\times
n}-2i\omega
\boldsymbol{\Lambda}_{k}^{1/2}\mathbf{W}_{k}^{T}\boldsymbol{\Lambda
}_{k}^{-1}\mathbf{X}(\mathbf{X}^{T}\boldsymbol{\Lambda
}_{k}^{-1}\mathbf{X})^{-1}\mathbf{X}^{T}\boldsymbol{\Lambda
}_{k}^{-1}\mathbf{W}_{k}\boldsymbol{\Lambda}_{k}^{1/2}\right)\right]^{-1/2}
\nonumber\\
& &\times \exp\left(-4\omega^{2}\sum_{k=1}^{\infty}\boldsymbol{\beta
}_{k}^{T}\mathbf{X}^{T}\mathbf{W}_{k}^{T}\boldsymbol{\Lambda
}_{k}^{-1}\mathbf{X}(\mathbf{X}^{T}\boldsymbol{\Lambda
}_{k}^{-1}\mathbf{X})^{-1}\mathbf{X}^{T}\boldsymbol{\Lambda
}_{k}^{-1}\mathbf{W}_{k}\boldsymbol{\Lambda}_{k}^{1/2}\right.\nonumber\\
& & \hspace*{1.5cm}\left.\times \left(\mathbf{I}_{n\times
n}-2i\omega
\boldsymbol{\Lambda}_{k}^{1/2}\mathbf{W}_{k}^{T}\boldsymbol{\Lambda
}_{k}^{-1}\mathbf{X}(\mathbf{X}^{T}\boldsymbol{\Lambda
}_{k}^{-1}\mathbf{X})^{-1}\mathbf{X}^{T}\boldsymbol{\Lambda
}_{k}^{-1}\mathbf{W}_{k}\boldsymbol{\Lambda}_{k}^{1/2}\right)^{-1}
\right.\nonumber\\
& &\hspace*{1.5cm}\left.\times
\boldsymbol{\Lambda}_{k}^{1/2}\mathbf{W}_{k}^{T}\boldsymbol{\Lambda
}_{k}^{-1}\mathbf{X}(\mathbf{X}^{T}\boldsymbol{\Lambda
}_{k}^{-1}\mathbf{X})^{-1}\mathbf{X}^{T}\boldsymbol{\Lambda
}_{k}^{-1}\mathbf{W}_{k}\mathbf{X}\boldsymbol{\beta
}_{k}\right)\nonumber\\
& &\hspace*{1.5cm}\times  \exp\left(
i\omega\sum_{k=1}^{\infty}\boldsymbol{\beta
}_{k}^{T}\mathbf{X}^{T}\mathbf{W}_{k}^{T}\boldsymbol{\Lambda
}_{k}^{-1}\mathbf{X}(\mathbf{X}^{T}\boldsymbol{\Lambda
}_{k}^{-1}\mathbf{X})^{-1}\mathbf{X}^{T}\boldsymbol{\Lambda
}_{k}^{-1}\mathbf{W}_{k}\mathbf{X}\boldsymbol{\beta
}_{k} \right)<\infty,\nonumber\\
\label{eqfc1ffiibb}
\end{eqnarray}
for $\omega < \min
\{B_{\widetilde{\mbox{\textbf{SSR}}}},\mathrm{IT}_{\widetilde{\mbox{\textbf{SSR}}}}\},$
with
\begin{eqnarray}
& &
\mathrm{IT}_{\widetilde{\mbox{\textbf{SSR}}}}=\frac{1}{2\mbox{trace}\left(\mathbf{W}^{*}\boldsymbol{R}_{\boldsymbol{\varepsilon}\boldsymbol{\varepsilon}}^{-1}\mathbf{X}(\mathbf{X}^{T}\boldsymbol{R}_{\boldsymbol{\varepsilon}\boldsymbol{\varepsilon}}^{-1}
\mathbf{X})^{-1}\mathbf{X}^{T}\boldsymbol{R}_{\boldsymbol{\varepsilon}\boldsymbol{\varepsilon}}^{-1}
\mathbf{W}\boldsymbol{R}_{\boldsymbol{\varepsilon}\boldsymbol{\varepsilon}}\right)}
\nonumber\\ &
&B_{\widetilde{\mbox{\textbf{SSR}}}}=\frac{1}{2\max_{k,i}\xi_{i}\left(\boldsymbol{\Lambda}_{k}^{1/2}\mathbf{W}_{k}^{T}\boldsymbol{\Lambda
}_{k}^{-1}\mathbf{X}(\mathbf{X}^{T}\boldsymbol{\Lambda
}_{k}^{-1}\mathbf{X})^{-1}\mathbf{X}^{T}\boldsymbol{\Lambda
}_{k}^{-1}\mathbf{W}_{k}\boldsymbol{\Lambda}_{k}^{1/2}\right)},\nonumber\\
\label{scomega}
\end{eqnarray}
\noindent where, as before, for $i=1,\dots,n,$
$$\xi_{i}\left(\boldsymbol{\Lambda}_{k}^{1/2}\mathbf{W}_{k}^{T}\boldsymbol{\Lambda
}_{k}^{-1}\mathbf{X}(\mathbf{X}^{T}\boldsymbol{\Lambda
}_{k}^{-1}\mathbf{X})^{-1}\mathbf{X}^{T}\boldsymbol{\Lambda
}_{k}^{-1}\mathbf{W}_{k}\boldsymbol{\Lambda}_{k}^{1/2}\right)$$
\noindent denotes the $i$th eigenvalue of matrix
$$\boldsymbol{\Lambda}_{k}^{1/2}\mathbf{W}_{k}^{T}\boldsymbol{\Lambda
}_{k}^{-1}\mathbf{X}(\mathbf{X}^{T}\boldsymbol{\Lambda
}_{k}^{-1}\mathbf{X})^{-1}\mathbf{X}^{T}\boldsymbol{\Lambda
}_{k}^{-1}\mathbf{W}_{k}\boldsymbol{\Lambda}_{k}^{1/2},$$ \noindent
for each $k\geq 1.$ An analytic continuation argument (see \cite{Lukacs}, Th. 7.1.1)
guarantees that $F_{\widetilde{\mbox{\textbf{SSR}}}}(i\omega )$
defines the unique limit characteristic functional  for all  values
of $\omega.$

\item[] (iii) From Lemma \ref{lem1}, for suitable  $\omega $  (see equation (\ref{eqowc3})
below), we obtain

\begin{eqnarray}& & E\left[\exp\left(i\omega \sum_{k=1}^{\infty}(\mathbf{Y}_{k}-\mathbf{X}\boldsymbol{\beta }_{k})^{T}\left(\mathbf{W}_{k}^{T}\boldsymbol{\Lambda
}_{k}^{-1}\mathbf{W}_{k}-\mathbf{W}_{k}^{T}\boldsymbol{\Lambda
}_{k}^{-1}\mathbf{X}(\mathbf{X}^{T}\boldsymbol{\Lambda
}_{k}^{-1}\mathbf{X})^{-1}\right.\right.\right.
\nonumber\\
& &  \hspace*{3cm}\left.\left.\left.\times
\mathbf{X}^{T}\boldsymbol{\Lambda
}_{k}^{-1}\mathbf{W}_{k}\right)(\mathbf{Y}_{k}-\mathbf{X}\boldsymbol{\beta
}_{k})\right)\right]
\nonumber\\
& & = \prod_{k=1}^{\infty}\left[\mbox{det}\left(\mathbf{I}_{n\times
n}-2i\omega
\boldsymbol{\Lambda}_{k}^{1/2}\left(\mathbf{W}_{k}^{T}\boldsymbol{\Lambda
}_{k}^{-1}\mathbf{W}_{k}-\mathbf{W}_{k}^{T}\boldsymbol{\Lambda
}_{k}^{-1}\mathbf{X}(\mathbf{X}^{T}\boldsymbol{\Lambda
}_{k}^{-1}\mathbf{X})^{-1}\right.\right.\right.
\nonumber\\
& &  \hspace*{3cm}\left.\left.\left.\times
\mathbf{X}^{T}\boldsymbol{\Lambda
}_{k}^{-1}\mathbf{W}_{k}\right)\boldsymbol{\Lambda}_{k}^{1/2}\right)\right]^{-1/2}.
\label{eqfc1ffiicc}
\end{eqnarray}

As before, considering again  Lemma \ref{lem1} with
$$\mathbf{M}=2i\omega \left(\mathbf{W}^{*}\mathbf{R}_{\boldsymbol{\varepsilon}\boldsymbol{\varepsilon}}^{-1}\mathbf{W}-\mathbf{W}^{*}\mathbf{R}_{\boldsymbol{\varepsilon}\boldsymbol{\varepsilon}}^{-1}
\mathbf{X}(\mathbf{X}^{T}\mathbf{R}_{\boldsymbol{\varepsilon}\boldsymbol{\varepsilon}}^{-1}\mathbf{X})^{-1}
\mathbf{X}^{T}\mathbf{R}_{\boldsymbol{\varepsilon}\boldsymbol{\varepsilon}}^{-1}\mathbf{W}\right),$$\noindent
and with
$$\mathbf{b}=2i\omega \left(\mathbf{W}^{*}\mathbf{R}_{\boldsymbol{\varepsilon}\boldsymbol{\varepsilon}}^{-1}\mathbf{W}-\mathbf{W}^{*}\mathbf{R}_{\boldsymbol{\varepsilon}\boldsymbol{\varepsilon}}^{-1}
\mathbf{X}(\mathbf{X}^{T}\mathbf{R}_{\boldsymbol{\varepsilon}\boldsymbol{\varepsilon}}^{-1}\mathbf{X})^{-1}
\mathbf{X}^{T}\mathbf{R}_{\boldsymbol{\varepsilon}\boldsymbol{\varepsilon}}^{-1}\mathbf{W}\right)\mathbf{X}\boldsymbol{\beta
},$$ \noindent we obtain,
\begin{eqnarray}
& & F_{\widetilde{\mbox{\textbf{SSE}}}}(i\omega
)=E\left[\exp\left(i\omega
\widetilde{\mbox{\textbf{SSE}}}\right)\right]\nonumber\\
& &  =\prod_{k=1}^{\infty}\left[\mbox{det}\left(\mathbf{I}_{n\times
n}-2i\omega
\boldsymbol{\Lambda}_{k}^{1/2}\left(\mathbf{W}^{T}_{k}\boldsymbol{\Lambda}_{k}^{-1}\mathbf{W}_{k}\right.\right.\right.
\nonumber\\
& &\hspace*{3cm}\left.\left.\left.
-\mathbf{W}^{T}_{k}\boldsymbol{\Lambda}_{k}^{-1}
\mathbf{X}(\mathbf{X}^{T}\boldsymbol{\Lambda}_{k}^{-1}\mathbf{X})^{-1}
\mathbf{X}^{T}\boldsymbol{\Lambda}_{k}^{-1}\mathbf{W}_{k}\right)\boldsymbol{\Lambda}_{k}^{1/2}\right)\right]^{-1/2}
\nonumber
\end{eqnarray}
\begin{eqnarray}
& &\times \exp\left(-4\omega^{2}\sum_{k=1}^{\infty}\boldsymbol{\beta
}_{k}^{T}\mathbf{X}^{T}\left(\mathbf{W}^{T}_{k}\boldsymbol{\Lambda}_{k}^{-1}\mathbf{W}_{k}
\right.\right.\nonumber\\ & & \hspace*{3cm} \left.\left.
-\mathbf{W}^{T}_{k}\boldsymbol{\Lambda}_{k}^{-1}
\mathbf{X}(\mathbf{X}^{T}\boldsymbol{\Lambda}_{k}^{-1}\mathbf{X})^{-1}
\mathbf{X}^{T}\boldsymbol{\Lambda}_{k}^{-1}\mathbf{W}_{k}\right)\boldsymbol{\Lambda}_{k}^{1/2}\right.\nonumber\\
& & \left.\times \left(\mathbf{I}_{n\times n}-2i\omega
\boldsymbol{\Lambda}_{k}^{1/2}\left(\mathbf{W}^{T}_{k}\boldsymbol{\Lambda}_{k}^{-1}\mathbf{W}_{k}\right.\right.\right.
\nonumber\\
& & \hspace*{3cm}\left.\left.\left.
-\mathbf{W}^{T}_{k}\boldsymbol{\Lambda}_{k}^{-1}
\mathbf{X}(\mathbf{X}^{T}\boldsymbol{\Lambda}_{k}^{-1}\mathbf{X})^{-1}
\mathbf{X}^{T}\boldsymbol{\Lambda}_{k}^{-1}\mathbf{W}_{k}\right)\boldsymbol{\Lambda}_{k}^{1/2}\right)^{-1}
\right.\nonumber\\
& &\hspace*{0.5cm}\left.\times
\boldsymbol{\Lambda}_{k}^{1/2}\left(\mathbf{W}^{T}_{k}\boldsymbol{\Lambda}_{k}^{-1}\mathbf{W}_{k}-\mathbf{W}^{T}_{k}\boldsymbol{\Lambda}_{k}^{-1}
\mathbf{X}(\mathbf{X}^{T}\boldsymbol{\Lambda}_{k}^{-1}\mathbf{X})^{-1}
\mathbf{X}^{T}\boldsymbol{\Lambda}_{k}^{-1}\mathbf{W}_{k}\right)\mathbf{X}\boldsymbol{\beta
}_{k}\right)\nonumber\\
& &\times  \exp\left( i\omega\sum_{k=1}^{\infty}\boldsymbol{\beta
}_{k}^{T}\mathbf{X}^{T}\left(\mathbf{W}^{T}_{k}\boldsymbol{\Lambda}_{k}^{-1}\mathbf{W}_{k}-\mathbf{W}^{T}_{k}\boldsymbol{\Lambda}_{k}^{-1}
\mathbf{X}(\mathbf{X}^{T}\boldsymbol{\Lambda}_{k}^{-1}\mathbf{X})^{-1}\right.\right.\nonumber\\
& &\hspace*{4cm}\left.\left.\times
\mathbf{X}^{T}\boldsymbol{\Lambda}_{k}^{-1}\mathbf{W}_{k}\right)\mathbf{X}\boldsymbol{\beta
}_{k} \right)<\infty,\nonumber\\
\label{eqfc1ffiidd}
\end{eqnarray}
\noindent for $\omega < \min
\{B_{\widetilde{\mbox{\textbf{SSE}}}},\mathrm{IT}_{\widetilde{\mbox{\textbf{SSE}}}}\},$
with \begin{eqnarray} & &
\mathrm{IT}_{\widetilde{\mbox{\textbf{SSE}}}}=
\frac{1}{2\mbox{trace}\left(\left(\mathbf{W}^{*}\boldsymbol{R}_{\boldsymbol{\varepsilon}\boldsymbol{\varepsilon}}^{-1}\mathbf{W}-\mathbf{W}^{*}
\boldsymbol{R}_{\boldsymbol{\varepsilon}\boldsymbol{\varepsilon}}^{-1}\mathbf{X}(\mathbf{X}^{T}\boldsymbol{R}_{\boldsymbol{\varepsilon}\boldsymbol{\varepsilon}}^{-1}\mathbf{X})^{-1}\mathbf{X}^{T}
\boldsymbol{R}_{\boldsymbol{\varepsilon}\boldsymbol{\varepsilon}}^{-1}\mathbf{W}\right)
\boldsymbol{R}_{\boldsymbol{\varepsilon}\boldsymbol{\varepsilon}}\right)}\nonumber\\
& &
B_{\widetilde{\mbox{\textbf{SSE}}}}=\nonumber\\
& &
\frac{1}{2\max_{k,i}\xi_{i}\left(\boldsymbol{\Lambda}_{k}^{1/2}\left(\mathbf{W}^{T}_{k}
\boldsymbol{\Lambda}_{k}^{-1}\mathbf{W}_{k}-\mathbf{W}^{T}_{k}\boldsymbol{\Lambda}_{k}^{-1}
\mathbf{X}(\mathbf{X}^{T}\boldsymbol{\Lambda}_{k}^{-1}\mathbf{X})^{-1}
\mathbf{X}^{T}\boldsymbol{\Lambda}_{k}^{-1}\mathbf{W}_{k}\right)\boldsymbol{\Lambda}_{k}^{1/2}\right)},\nonumber\\
\label{eqowc3} \end{eqnarray}\noindent where, as before, for
$i=1,\dots,n,$
$$\xi_{i}\left(\boldsymbol{\Lambda}_{k}^{1/2}\left(\mathbf{W}^{T}_{k}
\boldsymbol{\Lambda}_{k}^{-1}\mathbf{W}_{k}-\mathbf{W}^{T}_{k}\boldsymbol{\Lambda}_{k}^{-1}
\mathbf{X}(\mathbf{X}^{T}\boldsymbol{\Lambda}_{k}^{-1}\mathbf{X})^{-1}
\mathbf{X}^{T}\boldsymbol{\Lambda}_{k}^{-1}\mathbf{W}_{k}\right)\boldsymbol{\Lambda}_{k}^{1/2}\right)$$
\noindent denotes the $i$th eigenvalue of matrix
$$\boldsymbol{\Lambda}_{k}^{1/2}\left(\mathbf{W}^{T}_{k}
\boldsymbol{\Lambda}_{k}^{-1}\mathbf{W}_{k}-\mathbf{W}^{T}_{k}\boldsymbol{\Lambda}_{k}^{-1}
\mathbf{X}(\mathbf{X}^{T}\boldsymbol{\Lambda}_{k}^{-1}\mathbf{X})^{-1}
\mathbf{X}^{T}\boldsymbol{\Lambda}_{k}^{-1}\mathbf{W}_{k}\right)\boldsymbol{\Lambda}_{k}^{1/2},$$
\noindent for each $k\geq 1.$ An analytic continuation argument (see \cite{Lukacs}, Th. 7.1.1)
guarantees that $F_{\widetilde{\mbox{\textbf{SSE}}}}(i\omega )$
defines the unique limit characteristic functional  for all values
of $\omega.$
\end{itemize}

\bigskip
{\small
}

\end{document}